\setlist{itemsep=0.2em, topsep=0.3em, parsep=0pt}
\tikzset{
  mleftdelimiter/.style={
    inner ysep=0pt, inner xsep=1ex,
    left delimiter=\{,
    label={[label distance=3mm]left:#1}
  }
}
\definecolor{light-gray}{gray}{0.95}
\definecolor{darkblue}{rgb}{0,0,.5}
\newcommand{\cmark}{\color{green}{\ding{51}}\color{black}}%
\newcommand{\xmark}{\color{red}{\ding{55}}\color{black}}%
\newcommand{\C}{\mathbb{C}}
\newcommand{\E}{\mathbb{E}}
\newcommand{\N}{\mathbb{N}}
\newcommand{\R}{\mathbb{R}}
\newcommand{\UU}{\bold{U}}
\newcommand{\VV}{\bold{V}}
\newcommand{\WW}{\bold{W}}
\newcommand{\XX}{\bold{X}}
\newcommand{\ZZ}{\bold{Z}}
\newcommand{\uu}{\bold{u}}
\newcommand{\xx}{\bold{x}}
\newcommand{\yy}{\bold{y}}
\newcommand{\zz}{\bold{z}}
\newcommand{\cC}{\mathcal{C}}
\newcommand{\cF}{\mathcal{F}}
\newcommand{\cL}{\mathcal{L}}
\newcommand{\cA}{\mathcal{A}}
\newcommand{\cB}{\mathcal{B}}
\newcommand{\cR}{\mathcal{R}}
\newcommand{\sR}{\mathsf{R}}
\newcommand{\cW}{\mathcal{W}}
\newcommand{\dW}{\overrightarrow{\mathcal{W}}}
\providecommand{\keywords}[1]{\textbf{Keywords } #1}
\newcommand{\eqd}{\stackrel{\mathrm{d}}=}
\newcommand{\1}{\mathds{1}}
\newcommand{\de}{\mathsf{\,d}}
\newcommand{\Ran}{\mathsf{Ran}}
\newcommand{\rank}{\mathsf{rank}}
\newcommand{\supp}{\mathop{\mathrm{supp}}}
\newcommand{\Cor}{\mathrm{Cor}}
\newcommand{\var}{\mathrm{Var}}
\newcommand{\Var}{\mathrm{Var}}
\DeclareMathOperator{\Cov}{Cov}
\DeclareMathOperator{\v@r}{V@R}
\newtheorem{theorem}{Theorem}[section]
\newtheorem{proposition}[theorem]{Proposition}
\newtheorem{corollary}[theorem]{Corollary}
\newtheorem{definition}[theorem]{Definition}
\newtheorem{lemma}[theorem]{Lemma}
\newtheorem{example}[theorem]{Example}
\newtheorem{remark}[theorem]{Remark}
\newenvironment{proof}[1][{Proof:}]{\begin{trivlist}
\item[\hskip \labelsep {\bfseries #1}]}{\hfill$\blacksquare$ \end{trivlist}}
\author[1]{Jonathan Ansari\thanks{Corresponding author: \texttt{jonathan.ansari@plus.ac.at}}}
\author[1]{Sebastian Fuchs \thanks{\texttt{sebastian.fuchs@plus.ac.at}}}
\affil[1]{University of Salzburg (Austria)}
\title{An ordering for the strength of functional dependence}
\begin{document}

\maketitle

\begin{abstract}
We introduce a new dependence order, termed the \emph{conditional convex order}, whose minimal and maximal elements characterize independence and perfect dependence. Moreover, it characterizes conditional independence, satisfies information monotonicity, and exhibits several invariance properties.
Consequently, it is an ordering for the strength of functional dependence of a random variable \(Y\) on a random vector \(\XX\,.\) 
As we show, various recently studied dependence measures---including Chatterjee’s rank correlation, Wasserstein correlations, and rearranged dependence measures---are increasing in this order and inherit their fundamental properties from it.
We characterize the conditional convex order by the Schur order and by the concordance order, and we verify it in settings such as additive error models, the multivariate normal distribution, 
and various copula-based models.
Our results offer a unified perspective on the behavior of dependence measures across statistical models.

\keywords{
Chatterjee's rank correlation; concordance order; conditional convex order; conditional independence; copula; dependence measure; dimension reduction; information monotonicity; optimal transport; perfect dependence; rearrangements; Schur order; Wasserstein correlation
}
\end{abstract}

\maketitle

\section{Introduction and main results}\label{secintro}

A fundamental problem in regression analysis is to predict a response \(Y\) given a random vector \(\XX\) of input variables. 
To construct parsimonious models, it is important to identify the relevant predictors.
Recent literature has focused on model-free variable selection methods based on novel dependence measures \(\kappa(Y,\XX)\) with the following remarkable properties \cite{Azadkia-2025,chatterjee2023,deb2020b,strothmann2022,wiesel2022}: 
\(\kappa(Y,\XX)\) takes values in \([0,1]\), where \(0\) characterizes independence of \(Y\) and \(\XX\) (\emph{zero-independence}), and \(1\) characterizes perfect dependence of \(Y\) on \(\XX\) (\emph{max-functionality}), i.e., there exists a measurable function \(f\) such that \(Y = f(\XX)\) almost surely. 
Moreover, \(\kappa\) satisfies information monotonicity, that is, \(\kappa(Y,\XX)\leq \kappa(Y,(\XX,\ZZ))\), and it characterizes conditional independence in the sense that \(\kappa(Y,\XX) = \kappa(Y,(\XX,\ZZ))\) if and only if \(Y\) and \(\ZZ\) are conditionally independent given \(\XX\).
A prominent example is Chatterjee's rank correlation \(\xi\) defined in Equation \eqref{Tuniv} below.
While the values \(0\) and \(1\) have a clear interpretation, the intermediate values remain elusive.
An open question concerns the identification of a dependence order that underlies the above type of dependence measures and ranks their values across various models.
In this paper, we fill this gap by introducing a new dependence order that allows us to answer the following question in the affirmative.

Is there a dependence order that
\begin{itemize}
    \item characterizes independence through its minimal elements and perfect (functional) dependence through its maximal elements,
    \item satisfies information monotonicity and characterizes conditional independence,
    \item underlies Chatterjee's rank correlation and related dependence measures,
    \item has a simple and interpretable form,
    \item can be verified in a variety of models, and
    \item applies to a large class of distributions? \pagebreak
\end{itemize}

Classical rank correlations such as Kendall's \(\tau\), Spearman's \(\varrho\), or Gini's \(\gamma\) quantify the degree of \emph{positive} or \emph{negative} dependence among random variables. 
There is a large literature on \emph{positive} dependence orderings underlying these rank correlations; see \cite{Bauerle-1997,Fang-Joe-1992,Joe-1990,Kimeldorf-1978,Kimeldorf-1987,Kimeldorf-1989,Scarsini-1996,Schriever-1987} and the references therein. 
The certainly most common \emph{positive} dependence order is the concordance order \cite{Tchen-1980,Yanagimoto-1969}, defined by a pointwise comparison of distribution and survival functions; see \eqref{defconcordance}. Its minimal and maximal elements correspond to countermonotonocity and comonotonocity, i.e., to
perfect negative and positive monotone dependence, respectively. 


In contrast, measures of regression dependence like \(\xi\) behave fundamentally differently, since they range from independence to perfect (not necessarily monotone) dependence.
Apart from our preliminary work \cite{Ansari-Rockel-2023,Ansari-LFT-2023},
research on \emph{global} dependence orderings is conducted in
\cite{Dabrowska-1981,siburg2013,Joe-1987,Scarsini-1990,Shaked-2012,Shaked-2013}. 
However, none of the dependence orderings in these references is able to describe, even in the simplest models, the behavior of dependence measures that characterize both independence \emph{and} perfect dependence.
For example, consider the additive error model
\begin{align}\label{eqadderrmod}
    Y = f(\XX) + \sigma \varepsilon,
\end{align}
where \(\sigma\geq 0\) is a deterministic parameter scaling the noise \(\varepsilon\) that is assumed to be independent of \(f(\XX)\,.\)
Since Chatterjee's \(\xi\) quantifies the strength of functional dependence \cite{chatterjee2023}, it should be decreasing in the noise parameter \(\sigma\).
Analogous monotonicity properties are expected to hold for related dependence measures such as Wasserstein correlations \cite{wiesel2022}, rearranged dependence measures \cite{strothmann2022}, kernel partial correlations \cite{deb2020b}, and measures of sensitivity \cite{Ansari-LFT-2023}. However, to the best of our knowledge, general monotonicity properties for such dependence measures have not been studied in the literature so far.


In this paper, we resolve this issue and introduce a new global dependence order---the \emph{conditional convex order}---that describes the strength of functional dependence of a random variable \(Y\) on a random vector \(\XX\). 
This order reflects the fundamental properties of a large class of dependence measures, and we verify it in settings like the additive error model \eqref{eqadderrmod}, the multivariate normal distribution, and various copula-based models. 
Before we propose eight natural axioms for a global dependence ordering underlying \(\xi\), recall the population version of Chatterjee's rank correlation defined by
\begin{align} \label{Tuniv}
  \xi(Y,\XX)
	:= \frac{\int_{\mathbb{R}} \Var (P(Y \geq y \, | \, \XX)) \; \mathrm{d} P^{Y}(y)}
					{\int_{\mathbb{R}} \Var (\mathds{1}_{\{Y \geq y\}}) \; \mathrm{d} P^{Y}(y)};
\end{align}
see \cite{chatterjee2021,chatterjee2020}.
It relies on the copula-based version in \cite{siburg2013} and on the sensitivity measures in \cite{Borgonovo-2021,Gamboa-Klein-Lagnoux-2018}. 
Dependence measures like $\xi$ capture the variability of conditional distributions in various ways and have attracted much attention in the past few years; see e.g.
\cite{auddy2021,Azadkia-2025,Figalli-2025,buecher2024,bickel2020,deb2020,bernoulli2021,deb2020b,han2022limit,shi2021normal,hörmann2025,strothmann2022,wiesel2022}.

For understanding the behavior of dependence measures in various models, a suitable dependence order is essential.
It is straightforward to see that \(\xi\) is not increasing in the concordance order. 
For example, let \(X\) and \(Y\) be standard normal with \(Y = \pm X.\) Then \((Y,X)\) is comonotone/countermonotone and thus maximal/minimal in the concordance order, but \(\xi(Y,X) = 1\) in both cases since \(Y\) is a deterministic function of \(X\,.\)

\subsection{Axioms for a global dependence ordering}

We propose the following axioms for a \emph{global dependence ordering} 
\(\prec\) 
that compares the strength of functional dependence of \(Y\) on \(\XX\) on a class \(\cR\) of random vectors defined on a common probability space \((\Omega,\cA,P)\). 
We denote by \(\eqd\) equality in distribution and write \(\equiv\) if \(\prec\) and \(\succ\) holds true.
\begin{enumerate}[(O1)]
\item \emph{Law-invariance}: \label{axdepord0} \((Y,\XX)\eqd (Y',\XX')\) implies \((Y,\XX) \equiv (Y',\XX')\).
\item \emph{Quasi-ordering}: \label{axdepord1}\(\prec\) is reflexive and transitive.
    \item \label{axdepord2} \emph{Characterization of independence}: \((Y,\XX)\prec (Y',\XX')\) for all \((Y',\XX')\)
    if and only if \(\XX\) and \(Y\) are independent.
    \item \label{axdepord3} \emph{Characterization of perfect dependence}: \((Y',\XX')\prec (Y,\XX)\) for all \((Y',\XX')\)
    if and only if \(Y\) is perfectly dependent on \(\XX\,.\)
    \item \label{axdepord4} \emph{Information monotonicity}: \((Y,\XX)\prec (Y,(\XX,\ZZ))\) for all \(\XX,\ZZ\) and \(Y\).
    \item \label{axdepord5} \emph{Characterization of conditional independence}: \((Y,\XX) \equiv (Y,(\XX,\ZZ))\) if and only if \(Y\) and \(\ZZ\) are conditionally independent given \(\XX\,.\)
    \item \label{axdepord6} \emph{Transformation invariance:} \((Y,\XX) \equiv (g(Y),{\bf h}(\XX))\) for every strictly increasing function \(g\) and for every bijective function \({\bf h}\,.\) 
    \item \label{axdepord7} \emph{Distributional invariance: 
    } 
    \((Y,\XX) \equiv (F_Y(Y),\XX)\) and \((Y,\XX) \equiv (Y,{\bf F}_\XX(\XX))\), where \({\bf F}_\XX = (F_{X_1},F_{X_2},\ldots)\) for \(\XX = (X_1,X_2,\ldots)\).
\end{enumerate}

\begin{remark}\label{reax}
    \begin{enumerate}[(a)]
    \item While \(Y\) and \(Y'\) are univariate random variables, \(\XX\) and \(\XX'\) are random vectors that may have arbitrary dimensions \(p,p'\in \N\), which are not assumed to be equal. 
    \item Axiom \ref{axdepord0} states that \(\prec\) is invariant under all versions of \((Y,\XX)\). Note that the term law-invariance is also used in the context of risk measures \cite{Foellmer-Schied-2016}.
    \item Axiom \ref{axdepord1} on reflexivity and transitivity is also considered in \cite{Dabrowska-1981,siburg2013,Shaked-2012}. In contrast to positive dependence orderings, a suitable global dependence ordering \(\prec\) cannot be expected to be antisymmetric (in the sense that \((Y,\XX)\equiv (Y',\XX')\) implies \((Y,\XX)\eqd(Y',\XX')\)) because it is desirable that \((Y,Y)\equiv (Y,-Y)\) due to Axiom \ref{axdepord3}.
        \item Axioms \ref{axdepord2} and \ref{axdepord3} 
        state that independent and perfectly dependent random vectors are minimal and maximal elements over the whole class \(\cR\), i.e., they are globally extreme elements. These axioms are significantly stronger than 
        \begin{itemize}
            \item \cite[Axioms O4 and O5]{Shaked-2012}, according to which any bivariate random vector \((Y,X)\) is comparable with an independent version \((Y^\perp,X^\perp)\) and with a perfectly dependent version\footnote{For a bivariate random vector \((Y,X)\), an independent resp. perfectly dependent version is a random vector \((Y^\perp,X^\perp)\) resp. \((Y^\top,X^\top)\) with \( Y^\perp \eqd Y^\top \eqd Y\) and \(X^\perp \eqd X^\top \eqd X\) such that \(Y^\perp\) and \(X^\perp\) are independent resp. \(Y^\top\) is perfectly dependent on \(X^\top\). We generally assume that \((\Omega,\cA,P)\) is non-atomic and thus admits an independent and perfectly dependent version.
            } \((Y^\top,X^\top)\) from the same Fr\'{e}chet class;
            \item \cite[Axiom O2 and O3]{siburg2013}, according to which \((Y^\top,X^\top)\prec (Y,X)\) implies that \(Y\) perfectly depends on \(X\), and \((Y,X)\prec (Y^\perp,X^\perp)\) implies that \(X\) and \(Y\) are independent;
            \item \cite[Axioms O4 and O5]{Dabrowska-1981} where minimal and maximal elements satisfy \(\E[Y|\XX]=\E Y\) a.s. and \(\E[Y|\XX] = Y\) a.s., respectively.
        \end{itemize}
        \item \label{reax5} Axioms \ref{axdepord4} and \ref{axdepord5} state that additional information provided by further predictor variables increases the strength of functional dependence, and remains unchanged precisely when the additional information is irrelevant given the existing information. For dependence measures, information monotonicity is studied in \cite{Figalli-2025}. On the level of dependence orderings, both axioms are new to the best of our knowledge. They yield a data processing inequality for \(\prec\) in the sense that \((Y,\ZZ)\prec (Y,\XX)\) whenever \(Y\) and \(\ZZ\) are conditionally independent given \(\XX\). This implies \((Y,{\bf h}(\XX))\prec (Y,\XX)\) for all measurable \({\bf h}\).
        \item Axiom \ref{axdepord6} is stronger than the invariance properties in \cite[Axiom O3]{Dabrowska-1981} and \cite[Axiom O3]{Shaked-2013} where \(g\) is assumed to be (increasing) linear. In \cite{siburg2013}, distributions from the same Fr\'{e}chet class are transformed to the copula setting and no further invariance properties are studied.
        \item Axiom \ref{axdepord7} on distributional invariance implies that \((Y,\XX)\prec (Y',\XX')\) only depends on the underlying copulas\footnote{A \(d\)-variate copula is the distribution function of a random vector \((U_1,\ldots,U_d)\) with \(U_i\sim U(0,1)\) for all \(i\). By Sklar's theorem, every \(d\)-variate distribution function \(F\) can be decomposed into \(F = C\circ (F_1,\ldots,F_d),\) where \(F_i\) is the \(i\)th marginal distribution function and \(C\) is a copula that is unique on \(\Ran(F_1)\times \cdots \times \Ran(F_d)\).} \(C_{Y,\XX}\) and \(C_{Y',\XX'}\) evaluated on the closed Cartesian products \(\overline{\Ran(F_Y)}\times \overline{\Ran(F_{X_1})}\times \cdots \overline{\Ran(F_{X_p})}\) and \(\overline{\Ran(F_{Y'})}\times \overline{\Ran(F_{X_1'})}\times \cdots \overline{\Ran(F_{X_{p'}'})}\) of the respective marginal distribution functions. 
    \end{enumerate}
\end{remark}

\subsection{A new global dependence order}\label{sec12}


To explain the idea how to construct a suitable dependence order that satisfies all the axioms above, let us have a closer look at the following representation of Chatterjee's rank correlation:
\begin{align}\label{repxi}
    \xi(Y,\XX) 
    = \alpha \, \int_{\mathbb{R}^{1+p}}  P(Y\geq y \mid \XX=\xx) ^2 
            \; \mathrm{d} (P^{Y} \otimes P^{\XX}) (y,\xx) -\beta,
\end{align}
where \(\alpha\) and \(\beta\) are positive constants depending only on \(\overline{\Ran(F_Y)}\).
Note that \(Y\) and \(\XX\) are generally dependent while the measure integrated against is the product measure as a consequence of the definition of \(\xi\).
It is straightforward to see that a pointwise ordering of the integrand in \eqref{repxi} is too strong to define a suitable dependence order.
If we split the integral in \eqref{repxi} into a double integral, there are two natural approaches for an ordering of conditional survival functions in squared mean. 


The first approach considers the variability of the conditional survival probability  \(P(Y\geq y \mid \XX=\xx)\) in \(y\) for each fixed \(\xx\,.\) This might be seen as the more natural and more intuitive approach supported by a variety of literature on stochastic orderings for conditional distributions;\footnote{For example, in optimal transport problems, optimal martingale couplings are based on a comparison of \(P^{Y\mid X \in (-\infty,x]}\) in convex order for all \(x\in \R\); see \cite[Theorem 1.8]{Beiglboeck-2016}} see \cite{Block-1988,Leskela-2017,Rueschendorf-1991,Whitt-1980,Whitt-1985}. In the setting of bivariate copulas, this approach is studied in \cite{siburg2013} where the dilation order and the dispersive order are used to construct \emph{regression dependence orders}, which imply an ordering of \(\xi\). However, as we discuss in Section \ref{subsecdispdil}, these orders are limited because they cannot be verified even in the simplest settings.

The second approach, which we explore in this paper, examines the variability of \(P(Y\geq y \mid \XX = \xx)\) in \(\xx\) for each fixed \(y\,.\) 
This perspective relates to the theory of random measures \cite{Kallenberg-2017}, and the works closest to our paper are \cite{Shaked-2012,Shaked-2013} where the variability of regression functions is studied.
As it turns out, our approach is much more powerful as it yields an ordering that satisfies all the axioms above and thus reflects the fundamental properties of \(\xi\). Further, this order can be verified for various standard models as we show in Theorem \ref{theadderrmod} and Section \ref{sec52}.

Pursuing the second approach, we introduce below a new dependence order that we denote as \emph{conditional convex order}. It compares conditional survival probabilities with respect to the convex order---the most common variability order studied in the literature; see \cite[Section 3]{Shaked-2007}. To this end, recall that, for bounded random variables \(S\) and \(T\), the convex order \(S\leq_{cx} T\) is defined by \(\E \varphi(S) \leq \E \varphi(T)\) for all convex functions \(\varphi\colon \R\to \R\). We denote by \(q_Y(v) := \inf\{y \mid F_Y(y)\geq v\},\) \(v\in (0,1)\), the left-continuous quantile function of \(Y\), also known as generalized inverse of \(F_Y\).

\begin{definition}[Conditional convex order]\label{defccx}~\\
    Let \((Y,\XX)\) and \((Y',\XX')\) be random vectors. Then we define the \emph{conditional convex order} \((Y,\XX)\preccurlyeq_{ccx} (Y',\XX')\) by \(P(Y\geq q_Y(v) \mid \XX) \leq_{cx} P(Y'\geq q_{Y'}(v) \mid \XX')\) for all \(v\in (0,1)\).
\end{definition}

\noindent We write \((Y,\XX)=_{ccx} (Y',\XX')\) if \((Y,\XX)\preccurlyeq_{ccx} (Y',\XX')\) and \((Y,\XX)\succcurlyeq_{ccx} (Y',\XX')\). 

Before we present our main results, let us discuss why the conditional convex order is a suitable candidate for a global dependence order underlying \(\xi\).
First note that a necessary condition for \((Y,\XX)\preccurlyeq_{ccx} (Y',\XX')\) is \(\overline{\Ran(F_Y)} = \overline{\Ran(F_{Y'})}\). This follows from the definition of the conditional convex order using that \(S\leq_{cx} T\) implies \(\E S = \E T\). 
We refer to this condition as \emph{marginal constraint}. 
In this case, the denominator of \(\xi\) in \eqref{Tuniv} is equal for \((Y,\XX)\) and \((Y',\XX')\).

Second, the numerator of \(\xi\) is increasing in the conditional convex order. This is a direct consequence of the fact that \(S\leq_{cx} T\) implies \(\Var(S) \leq \Var(T)\). It immediately follows that Chatterjee's rank correlation is increasing in \(\preccurlyeq_{ccx}\), i.e., 
\begin{align}\label{eqccxxi}
    (Y,\XX) \preccurlyeq_{ccx} (Y',\XX') \quad \text{implies} \quad \xi(Y,\XX) \leq \xi(Y',\XX').
\end{align}
We provide this statement in a more general framework in Theorem \ref{theconsccx}.

Third, as studied in \cite{Figalli-2025}, convexity is essential for information monotonicity and, in particular, for zero-independence and max-functionality of dependence measures.
It is straightforward to verify that minimal and maximal elements in conditional convex order characterize zero-independence and max-functionality: On the one hand, \(Y\) and \(\XX\) are independent if and only if, for all \(v\in (0,1)\), \(P(Y\geq q_Y(v) \mid \XX)\) is a.s. constant and thus minimal in convex order. On the other hand, \(Y\) perfectly depends on \(\XX\) if and only if, for all \(v\in (0,1)\), \(P(Y\geq q_Y(v) \mid \XX)\) attains a.s.~values in \(\{0,1\}\) and thus is maximal in convex order within the class of \([0,1]\)-valued random variables with mean \(P(Y\geq q_Y(v))\).
More generally, the conditional convex order allows a simple interpretation: 
Little/Large variability of the conditional probabilities \(P(Y\geq q_Y(v) \mid \XX)\) 
 in convex order indicates low/strong dependence of \(Y\) on \(\XX\)---with the extreme cases of independence and perfect dependence discussed before.

\begin{remark}\label{remccx}
    \begin{enumerate}[(a)]
        \item \label{remccxa}
        To derive ordering results for models such as \eqref{eqadderrmod}, we allow in Definition \ref{defccx} a comparison of random vectors with different marginal distributions. If \(Y\) and \(Y'\) have the same distribution, then \((Y,\XX)\preccurlyeq_{ccx} (Y',\XX')\) simplifies to \(P(Y\geq y\mid \XX) \leq_{cx} P(Y'\geq y\mid \XX')\) for all \(y\in \R\). For bivariate stochastically increasing distributions from the same Fr\'{e}chet class, the conditional convex order coincides with the concordance order, as we show in Proposition \ref{rembivSchur}. 
        Recall that a bivariate distribution (function) is \emph{stochastically increasing (SI)} if there exists a bivariate random vector \((W_1,W_2)\) with this distribution (function) such that \(W_1\) is SI in \(W_2\), i.e., \(\E[f(W_1)\mid W_2 = t]\) is increasing in \(t\) for all increasing functions \(f\) such that the expectations exist. 
        \item \label{remccx2} The conditional convex order is defined by comparing conditional survival probabilities \(P(Y\geq q_Y(v) \mid \XX)\) in convex order for \emph{all} \(v\in (0,1)\). Equivalent versions can be achieved for strict survival probabilities and cumulative probabilities. For example, \((Y,\XX)\preccurlyeq_{ccx} (Y',\XX')\) is equivalent to
        \begin{align}\label{eqequiccx}
            P(Y\leq q_Y(v)\mid \XX)\leq_{cx} P(Y'\leq q_{Y'}(v) \mid \XX') \quad \text{for } \lambda\text{-\emph{almost all }} v\in (0,1);
        \end{align}
        see Proposition \ref{propversccx}. The exceptional null set with respect to the Lebesgue measure \(\lambda\) in \eqref{eqequiccx} is required due to possibly different continuity properties of the transformations \(F_Y\circ q_Y\) and \(F_{Y'}\circ q_{Y'}\) at the jump points of \(F_Y\) and \(F_{Y'}\), respectively; see Example \ref{exdiscY} and Proposition \ref{Prop.Inverse}\,\ref{Prop.Inverse2} and \ref{Prop.Inverse3} for details. In contrast, the transformed survival functions \(t \mapsto P(Y\geq q_Y(t))\) and \(t\mapsto P(Y'\geq q_{Y'}(t))\) are left-continuous and, hence, they coincide under the marginal constraint for all \(t\). This motivated us to define the conditional convex order through survival probabilities.
    \end{enumerate}
\end{remark}

\subsection{Main results}\label{secmainresults}

The following main result shows that the conditional convex order in Definition \ref{defccx} satisfies all the proposed axioms for a global dependence ordering.
Therefore, we consider for a closed set \(A\subseteq [0,1]\) the class 
\begin{align}\label{defR_A}
    \cR_A := \{(Y,\XX) \colon (\Omega,\cA,P) \to (\mathbb{R}^{1+p},\cB(\R^{1+p})) \mid  \overline{\Ran(F_Y)} = A\,, \, p\in \N \}
\end{align}
of random vectors.
 
\begin{theorem}[Fundamental properties of \(\preccurlyeq_{ccx}\)]\label{thefundpropS}
 For \(\cR = \cR_A\), the conditional convex order \(\preccurlyeq_{ccx}\) in Definition \ref{defccx} satisfies the Axioms \ref{axdepord0} -- \ref{axdepord7}.
\end{theorem}


\begin{remark}\label{remSchurOrder} 
\begin{enumerate}[(a)]
    \item The class \(\cR_A\) in Theorem \ref{thefundpropS} consists of all random vectors \((Y,\XX)\) that are at least two-dimensional and satisfy the marginal constraint with respect to the set \(A\). 
    For example, if \(A=[0,1]\), then \(\cR_A\) includes all random vectors \((Y,\XX)\) such that \(Y\) has a continuous distribution function; note that there is no assumption on the distribution or dimension of \(\XX\).
    The set \(\cR_A\) includes the extreme elements in \(\preccurlyeq_{ccx}\), for example, the comonotone random vector \((Y,Y)\) as maximal element and the independent random vector \((Y,c)\), \(c\in \R\), as minimal element.
    \item Under the marginal constraint, independent and perfectly dependent random vectors are \emph{globally} extreme elements in \(\preccurlyeq_{ccx}\) and are thus comparable to all random vectors in \(\cR_A\).
    Such a property does not hold for the regression dependence orders in \cite{siburg2013} where independent random variables may not be comparable to other random vectors even if they have the same marginal distributions.
    \item The conditional convex order is \emph{not} symmetric, i.e., for bivariate random vectors \((Y,X)\) and \((Y',X')\), the relation \((Y,X)\preccurlyeq_{ccx}(Y',X')\) does not imply \((X,Y)\preccurlyeq_{ccx}(X',Y')\). This is in line with the asymmetric notion of perfect dependence, where \(Y\) being a function of \(X\) does not imply that \(X\) is a function of \(Y\).
    \item \label{remSchurOrder4}  As a consequence of information monotonicity and characterization of conditional independence, the conditional convex order satisfies the data processing inequality
\((Y,\ZZ)\preccurlyeq_{ccx} (Y,\XX)\) whenever \(Y\) and \(\ZZ\) are conditionally independent given \(\XX\,.\) 
It follows that \((Y,\bold{h}(\XX)) \preccurlyeq_{ccx} (Y,\XX)\) for all measurable functions \(\bold{h}\).
Further, the conditional convex order is self-equitable in the sense that \((Y,\bold{h}(\XX)) =_{ccx} (Y,\XX)\) for all measurable functions \(\bold{h}\) such that \(Y\) and \(\XX\) are conditionally independent given \(\bold{h}(\XX)\,.\)
\end{enumerate}
\end{remark}

In the next theorem, our second main result, we give two characterizations of the conditional convex order---both in terms of \emph{bivariate} SI random vectors. These random vector are compared with respect to the concordance order \(\leq_c\), the \emph{positive} dependence ordering mentioned above; see also Section \ref{sec_cccx}. 
The first characterization relies on conditional comonotonicity---a dependence concept that underlies Wasserstein correlations.
The second characterization is based on a dimension reduction principle that we study in detail in Section \ref{secdimredprin}. 
Regarding the notation, we consider for \((Y,\XX)\) the rearranged quantile transform \((0,1)^2 \ni (t,u) \mapsto q_{Y;\XX}^{\uparrow u}(t)\) defined in Equation \eqref{defyu} below. 
Then,
for independent \(U,V\sim U(0,1)\),
the \emph{bivariate} random vector \((F_Y\circ q_{Y;\XX}^{\uparrow U}(V),U)\) is equivalent to \((Y,\XX)\) in conditional convex order, that is,
\begin{align}\label{eqredrv}
    (F_Y\circ q_{Y;\XX}^{\uparrow U}(V),U) =_{ccx} (Y,\XX);
\end{align}
see Theorem \ref{lemtrafSI}.
We refer to such a bivariate random vector as \emph{reduced random vector}.
As we study in Section \ref{secdepmea}, each of the equivalent versions in the following theorem generates a class of dependence measures that are \(\preccurlyeq_{ccx}\)-increasing and inherit all their fundamental properties from the conditional convex order.


\begin{theorem}[Characterization of \(\preccurlyeq_{ccx}\) by bivariate concordance order]\label{propcharS}~\\
For \(U,V\sim U(0,1)\) independent, the following statements are equivalent:
    \begin{enumerate}[(i)]
        \item \label{propcharS1} \((Y,\XX)\preccurlyeq_{ccx} (Y',\XX')\,,\)
        \item \label{propcharS3} \((F_Y \circ F_{Y|\XX}^{-1}(V), V) \geq_c (F_{Y'}\circ F_{Y'|\XX'}^{-1}(V),V)\)  with \(V\) independent of \(\XX\) and \(\XX'\),
        \item \label{propcharS2} \((F_Y \circ q_{Y;\XX}^{\uparrow U}(V),U)\leq_{c} (F_{Y'} \circ q_{Y';\XX'}^{\uparrow U}(V),U)\,.\) 
    \end{enumerate}
    In particular, we have \((Y,\XX)=_{ccx} (Y',\XX')\) if and only if \((F_Y \circ F_{Y|\XX}^{-1}(V), V) \eqd (F_{Y'}\circ F_{Y'|\XX'}^{-1}(V),V)\) if and only if \((F_Y \circ q_{Y;\XX}^{\uparrow U}(V),U)\eqd ( F_{Y'} \circ q_{Y';\XX'}^{\uparrow U}(V), U)\).
\end{theorem}

\begin{remark}\label{remcharccx}
\begin{enumerate}[(i)]
\item Recall from \eqref{eqccxxi} that the conditional convex order underlies Chatterjee’s rank correlation. In Theorem \ref{theconsccx}, we study classes of related dependence measures that are \(\preccurlyeq_{ccx}\)-increasing and constructed via conditional distribution functions.
\item By Theorem \ref{propcharS}\,\ref{propcharS3}, \(\preccurlyeq_{ccx}\) is characterized via a comparison of \emph{bivariate} random vectors in concordance order. These random vectors are conditionally comonotone and SI.
Conditional comonotonicity follows from the representation
\(P(F_{Y|\XX}^{-1}(V) \leq y, V\leq y') = \int_{\R^p} \min\{F_{Y|\XX=\xx}(y) , y'\} \de P^\XX(\xx)\). Note that concavity of \(u\mapsto \min\{u,y\}\) explains the reversed inequality sign in \ref{propcharS3}.
Wasserstein correlations \cite{wiesel2022} are based on conditional comonotonicity and, as we show in Theorem \ref{theOT}, they are \(\preccurlyeq_{ccx}\)-increasing. While \(\preccurlyeq_{ccx}\) is defined via distribution/survival functions, the characterization in \ref{propcharS3} is a quantile-based version. Independence of \((F_{Y|\XX}^{-1}(V),V)\) characterizes perfect dependence of \(Y\) on \(\XX\). Conversely, comonotonicity of \((F_{Y|\XX}^{-1}(V),V)\) corresponds to independence of \(\XX\) and \(Y\). 
\item \label{remcharccx3} Rearranged dependence measures \cite{strothmann2022} are constructed via measures of concordance and reduced random vectors; see \eqref{deffunccon}.
As a consequence of Theorem \ref{propcharS}\,\ref{propcharS2}, such measures are \(\preccurlyeq_{ccx}\)-increasing; see Theorem \ref{cordepmeas}. 
The reduced random vectors \((F_Y\circ q_{Y;\XX}^{\uparrow U}(V),U)\) are SI.
They are constructed via the dimension-reduction principle in Section \ref{secdimredprin}, and, notably, they are \(\preccurlyeq_{ccx}\)-equivalent to \((Y,\XX)\); see \eqref{eqredrv}. Independence/Comonotonicity of \((q_{Y;\XX}^{\uparrow U}(V),U)\) characterizes independence/perfect dependence of \(Y\) and/on \(\XX\).
\item The concordance order requires equal marginal distributions. Consequently, we have \(F_Y(F_{Y|\XX}^{-1}(V))\eqd F_{Y'}(F_{Y'|\XX'}^{-1}(V))\) and \(F_Y(q_{Y;\XX}^{\uparrow U}(V)) \eqd F_{Y'}(q_{Y';\XX'}^{\uparrow U}(V))\) in Theorem \ref{propcharS}\,\ref{propcharS3} and \ref{propcharS2}. Either condition is equivalent to the marginal constraint \(\overline{\Ran(F_Y)} = \overline{\Ran(F_{Y'})}\) resulting from the definition of the conditional convex order. 
\end{enumerate}
\end{remark}

The next theorem, which constitutes our third main result, provides new insights into the behavior of dependence measures like \(\xi\) beyond the boundary values \(0\) and \(1\). While their construction is based on the zero-independence and max-functionality, these measures typically satisfy further useful properties such as information monotonicity; see \cite{Figalli-2025}.
However, as we discuss in Remark \ref{remadderrmod} below,
information monotonicity alone does not explain monotonicity of dependence measures, for instance, in the noise level of additive error models.
In Theorem \ref{theadderrmod}, we show that the additive error model \eqref{eqadderrmod} is decreasing in the parameter \(\sigma\) with respect to the conditional convex order, and hence so are all \(\preccurlyeq_{ccx}\)-increasing dependence measures.


\begin{theorem}[Additive error models]\label{theadderrmod}~\\
Consider \(Y = f(\XX) + \sigma \varepsilon\) and \(Y' = f(\XX) + \sigma'\varepsilon\) for \(0\leq \sigma < \sigma'.\) Assume that \(F_Y\) and \(F_{Y'}\) are continuous and that \(\varepsilon\) is independent from \(f(\XX)\,.\) 
    Then we have \((Y,\XX) \succcurlyeq_{ccx} (Y',\XX).\)
\end{theorem}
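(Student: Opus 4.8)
The plan is to verify the inequality of Definition~\ref{defccx} directly and levelwise: for each fixed $v\in(0,1)$ I will show $P(Y'\ge q_{Y'}(v)\mid\XX)\leq_{cx}P(Y\ge q_Y(v)\mid\XX)$, which is exactly $(Y,\XX)\succcurlyeq_{ccx}(Y',\XX)$. First I would record that the marginal constraint holds: since $F_Y$ and $F_{Y'}$ are continuous, $\overline{\Ran(F_Y)}=[0,1]=\overline{\Ran(F_{Y'})}$. Next I reduce everything to a single real regressor. Writing $Z:=f(\XX)$ and $\overline F_\varepsilon(t):=P(\varepsilon\ge t)$ for the survival function of $\varepsilon$, the additive structure together with the independence of $\varepsilon$ and $f(\XX)$ yields, for $\sigma>0$, the version $P(Y\ge y\mid\XX)=\overline F_\varepsilon\big(\tfrac{y-Z}{\sigma}\big)$, a nondecreasing function of $Z$ alone (and analogously for $Y'$ with $\sigma'$). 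Thus, abbreviating $\Phi_v(z):=\overline F_\varepsilon\big(\tfrac{q_Y(v)-z}{\sigma}\big)$ and $\Psi_v(z):=\overline F_\varepsilon\big(\tfrac{q_{Y'}(v)-z}{\sigma'}\big)$, the claim becomes $\Psi_v(Z)\leq_{cx}\Phi_v(Z)$. Both random variables have the same mean $P(Y\ge q_Y(v))=1-v=P(Y'\ge q_{Y'}(v))$ by continuity of $F_Y,F_{Y'}$, which is the necessary equal-means condition for a convex comparison.

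The heart of the argument is a single-crossing property. The maps $\Phi_v,\Psi_v$ are both nondecreasing in $z$ because $\overline F_\varepsilon$ is nonincreasing. Writing their arguments as the affine functions $a(z):=\tfrac{q_Y(v)-z}{\sigma}$ and $b(z):=\tfrac{q_{Y'}(v)-z}{\sigma'}$, whose slopes $-1/\sigma<-1/\sigma'$ differ, there is a unique $z^\ast$ with $a(z^\ast)=b(z^\ast)$; one checks $a(z)>b(z)$ for $z<z^\ast$ and $a(z)<b(z)$ for $z>z^\ast$. Since $\overline F_\varepsilon$ is nonincreasing, this gives $\Phi_v(z)\le\Psi_v(z)$ for $z<z^\ast$ and $\Phi_v(z)\ge\Psi_v(z)$ for $z>z^\ast$; that is, $\Phi_v-\Psi_v$ changes sign exactly once, from negative to positive. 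Crucially, this conclusion does not require $\overline F_\varepsilon$ to be strictly decreasing, so it applies to arbitrary noise laws.

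Finally I would invoke the cut criterion for the convex order. Because $\Phi_v$ and $\Psi_v$ are monotone, the single crossing of the two functions transfers to a single crossing of the distribution functions of $\Phi_v(Z)$ and $\Psi_v(Z)$: comparing level sets of monotone functions one gets $F_{\Psi_v(Z)}\le F_{\Phi_v(Z)}$ below the crossing value and the reverse inequality above it. Together with the equality of means, the cut (Ohlin / Karlin--Novikov) criterion for the convex order, see e.g.\ \cite[Section~3]{Shaked-2007}, yields $\Psi_v(Z)\leq_{cx}\Phi_v(Z)$. As this holds for every $v\in(0,1)$, we obtain $(Y,\XX)\succcurlyeq_{ccx}(Y',\XX)$. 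The degenerate case $\sigma=0$ is immediate: there $\Phi_v(Z)=\ind\{Z\ge q_Y(v)\}$ is $\{0,1\}$-valued with mean $1-v$, hence maximal in convex order, so $(Y,\XX)$ is a maximal element.

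I expect the main obstacle to be the two ``soft'' reductions rather than the convex-order core. First, passing from conditioning on $\XX$ to the scalar $Z=f(\XX)$ must be justified carefully; this is exactly where the independence hypothesis enters, and one has to ensure the conditional law of $Y$ given $\XX$ is indeed governed by $f(\XX)$ (equivalently, that $Y$ and $\XX$ are conditionally independent given $f(\XX)$). Second, turning the single crossing of the functions $\Phi_v,\Psi_v$ into a genuine single crossing of the associated distribution functions needs care when $\overline F_\varepsilon$ has flat stretches or $Z$ has atoms, where one must argue with generalized inverses and left/right limits; once this bookkeeping is done, the cut criterion applies verbatim. An alternative route, which I would keep in reserve, is to observe that $(Y,Z)$ and $(Y',Z)$ are stochastically increasing and to deduce the comparison from the concordance-order characterization of Theorem~\ref{propcharS}, but the direct cut argument above seems both shorter and more transparent.
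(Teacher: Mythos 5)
Your proof is correct and follows essentially the same route as the paper's: reduce to the scalar regressor $Z=f(\XX)$ via sufficiency, exploit the single crossing of the affine arguments $(q_Y(v)-z)/\sigma$ and $(q_{Y'}(v)-z)/\sigma'$ together with the monotonicity of the noise distribution function, and conclude from the equality of means supplied by the marginal constraint. The only cosmetic difference is that the paper packages the final step as a sign-change lemma for decreasing functions feeding into the Schur-order characterization of Theorem \ref{thecharSchur}, whereas you invoke the classical cut (Ohlin/Karlin--Novikov) criterion on the laws of $\Phi_v(Z)$ and $\Psi_v(Z)$; these are equivalent formulations of the same single-crossing argument, and the two reductions you flag as delicate (sufficiency of $f(\XX)$, and transferring the crossing to the distribution functions) are handled in the paper exactly as you anticipate, via Theorem \ref{propmvSchur}\eqref{propmvSchur1}.
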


\begin{remark}\label{remadderrmod}
    \begin{enumerate}[(a)]
        \item Besides the continuity assumption on \(Y\) and the independence condition between \(f(\XX)\) and \(\varepsilon\), there are no assumptions on the distributions of the underlying random variables in Theorem \ref{theadderrmod}. In particular, the error \(\varepsilon\) is \emph{not} assumed to follow a symmetric distribution, to be infinitely divisible, or to have zero mean.
 Extensions to non-additive error models are studied in Section \ref{secsufdimred}.
\item \label{remadderrmod3} Consider \(Y_{\sigma} := X + \sigma \varepsilon\) where \(X\) and \(\varepsilon\) are standard normal and independent. Then, for \(0<\sigma' < \sigma''\), we have \((Y_{\sigma'},X)\succcurlyeq_{ccx} (Y_{\sigma''},X)\) by Theorem \ref{theadderrmod} and, obviously, \((Y_{\sigma'},X)\not =_{ccx} (Y_{\sigma''},X)\). Since the \(\sigma\)-algebras generated by \((Y_{\sigma'},X)\) and \((Y_{\sigma''},X)\) coincide, there is no information gain for \(Y_\sigma\) in the sense of Axiom \ref{axdepord4} when decreasing the parameter \(\sigma\). Thus, 
information monotonicity is not sufficient for a comparison in \(\preccurlyeq_{ccx}\).
    \end{enumerate}
\end{remark}

\subsection{Our contribution to the literature and structure of the paper}

We propose Axioms \ref{axdepord0}--\ref{axdepord7} as natural and desirable axioms for a global dependence ordering that describes the strength of functional dependence of a random variable \(Y\) on a random vector \(\XX\). 
    As shown in Theorem \ref{thefundpropS}, our first main result, the conditional convex order introduced in Definition \ref{defccx} satisfies all these axioms.
    To the best of our knowledge, there is no work that addresses all these axioms, and we are not aware of any dependence ordering that satisfies all these fundamental properties. 
    We refer to 
    \cite{Figalli-2025,Kimeldorf-1989,Mori-Szekely-2019,Renyi-1959,Scarsini-1984} for various axioms at the level of measures of association, and to \cite{Joe-2014,Muller-2002,Shaked-2007} for an overview of dependence orderings.

In Theorem \ref{propcharS}, our second main result, we present two characterizations of \(\preccurlyeq_{ccx}\) based on the bivariate concordance order. The first characterization in Theorem \ref{propcharS}\,\ref{propcharS3} relies on conditionally comonotone random vectors.
The second characterization in Theorem \ref{propcharS}\,\ref{propcharS2} is based on a dimension-reduction principle using the concept of rearrangements, which we study in detail in Section \ref{secsuffsch}.
More precisely, we use the Hardy-Littlewood-Polya theorem to rearrange conditional distribution functions and to characterize \(\preccurlyeq_{ccx}\) in terms of the Schur order; see Theorem \ref{thecharSchur}. This characterization is particularly useful for verifying \(\preccurlyeq_{ccx}\) in several models. As a first example, we verify \(\preccurlyeq_{ccx}\) for bivariate Bernoulli distributions in Proposition \ref{Prop.Bernoulli}. 
The dimension reduction principle is formalized in Theorem \ref{lemtrafSI} where we construct, for a given random vector \((Y,\XX)\), a \(\preccurlyeq_{ccx}\)-equivalent bivariate SI random vector.
In Example \ref{exdimrednormal}, we illustrate the dimension-reduction procedure for the multivariate normal distribution.

Section \ref{sec52} builds on the dimension reduction procedure and provides sufficient conditions for verifying \(\preccurlyeq_{ccx}\) under stochastic monotonicity assumptions.
In particular, we verify \(\preccurlyeq_{ccx}\) for copula-based models in Theorem \ref{propmvSchur}. This theorem is also used to prove Theorem \ref{theadderrmod} on additive error models.
Somewhat surprisingly, Theorem \ref{propschurnormal} shows that multivariate normal distributions are always comparable in conditional convex order, and
the strength of functional dependence is solely encoded in the Schur complements of the covariance matrices.

In Section \ref{secdepmea}, we show that the conditional convex order generates several classes of dependence measures recently studied in the statistics literature. 
Due to Theorem \ref{theconsccx}, \(\preccurlyeq_{ccx}\) underlies Chatterjee's rank correlation and related dependence measures constructed via conditional distribution functions.
As we show in Theorem \ref{theOT}, \(\preccurlyeq_{ccx}\) also underlies Wasserstein correlations  \cite{wiesel2022}.
In Theorem \ref{cordepmeas}, we prove that rearranged dependence measures \cite{strothmann2022} are \(\preccurlyeq_{ccx}\)-increasing. Similar results hold true for the integrated \(R^2\) \cite{Azadkia-2025} (see Remark \ref{remaza}\,\ref{remaza2}) and the kernel partial correlation \cite{deb2020b} (see Remark \ref{remnorm}\,\ref{remnorm2}).

In Section \ref{seccompS}, we compare the conditional convex order with related positive and global dependence orderings from the literature \cite{siburg2013,Shaked-2012,Shaked-2013
}. In particular, we show that none of the existing orderings satisfies Axioms \ref{axdepord0}–\ref{axdepord7}, nor do they describe the behavior of Chatterjee’s rank correlation and related dependence measure across various models.

Some useful properties of generalized inverses, convex order, and transformations of random vectors are given in Appendix \ref{appA}. All proofs are deferred to Appendix \ref{App:Proofs}. 
Throughout the paper,
boldface symbols \(\xx\) and \(\XX\) are used for vectors and random vectors, respectively, and standard symbols \(y\) and \(Y\) for real numbers and real-valued random variables.






\section{Characterizations of \(\preccurlyeq_{ccx}\)}\label{secsuffsch}

Recall Theorem \ref{propcharS} where two characterizations of the conditional convex order are given. 
In Theorem \ref{thecharSchur} below, we establish a third characterization of \(\preccurlyeq_{ccx}\) in terms of the Schur order, which is based on rearrangements and can be verified in several settings. 
The link to the \(\preccurlyeq_{ccx}\)-order is provided by the Hardy--Littlewood--P\'{o}lya theorem (Lemma \ref{lemhlpt}), which characterizes the (standard) convex order in terms of the Schur order.
The dimension-reduction procedure in Section \ref{secdimredprin} relies on rearrangements and is  of central importance for our paper.


\subsection{Decreasing rearrangements}

In this subsection, we characterize the conditional convex order through rearrangements. This concept plays a key role for verifying \(\preccurlyeq_{ccx}\) in several settings. Large parts of the paper, in particular, Sections \ref{secdimredprin}, \ref{sec52}, and \ref{secrearrdepmea}, as well as Theorem \ref{propcharS}\,\ref{propcharS2} and Theorem \ref{theadderrmod}, are based on rearrangements.

For 
\(d,d'\in \N\,,\) let \(f\colon (0,1)^d\to \R\) and \(g\colon (0,1)^{d'}\to \R\) be integrable functions. Then the \emph{Schur order} \(f\prec_S g\) is defined by
\begin{align}\label{defSchurfun}
\int_0^x f^*(t)\de t &\leq \int_0^x g^*(t)\de t ~~~\text{for all } x\in (0,1) \text{ and} ~~~
\int_0^1 f^*(t)\de t = \int_0^1 g^*(t)\de t\,,
\end{align}
where \(h^*\colon (0,1)\to \R\) denotes the \emph{decreasing rearrangement} of an integrable function \(h\colon (0,1)^k\to \R\,,\) \(k\in \N\,,\) i.e., the essentially (with respect to the Lebesgue measure) uniquely determined decreasing function \(h^*\) such that \(\lambda(h^*\geq w)=\lambda^k(h\geq w)\) for all \(w\in \R\,;\) see \cite{Chong-1974}. Here, \(\lambda^k\) denotes the Lebesgue measure on \((0,1)^k\).

The Schur order is characterized by the following multivariate Hardy-Littlewood-Polya theorem; see \cite[Theorem 2.5]{Chong-1974}. 

\begin{lemma}[Hardy--Littlewood--Polya theorem]\label{lemhlpt}
The following are equivalent:
\begin{enumerate}[(i)]
    \item \(f\prec_S g\)
    \item \label{lemhlpt2} \(\int_{(0,1)^d} \varphi\circ f \de \lambda^d \leq \int_{(0,1)^{d'}} \varphi\circ g \de \lambda^{d'}\) for all convex functions \(\varphi\colon \R \to \R\) such that the integrals exist.
\end{enumerate}
\end{lemma}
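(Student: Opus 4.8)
The plan is to collapse the multivariate statement to a one-dimensional comparison of the decreasing rearrangements and then recognize the classical Hardy--Littlewood--P\'olya equivalence for monotone functions. The first, purely measure-theoretic, step exploits that a function and its decreasing rearrangement are \emph{equimeasurable}: by definition \(\lambda(f^*\geq w)=\lambda^d(f\geq w)\) for all \(w\), so \(f\) under \(\lambda^d\) and \(f^*\) under \(\lambda\) have the same distribution. Consequently, for every Borel \(\varphi\) for which the integrals exist,
\[
\int_{(0,1)^d}\varphi\circ f\de\lambda^d=\int_0^1\varphi\circ f^*\de\lambda,
\]
and likewise for \(g\). Hence both (a) and (b) depend on \(f,g\) only through \(f^*,g^*\), and it suffices to prove the equivalence for the two decreasing functions \(f^*,g^*\colon(0,1)\to\R\). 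Writing \(F(x):=\int_0^x f^*\de t\) and \(G(x):=\int_0^x g^*\de t\), which are continuous and concave since their derivatives \(f^*,g^*\) are decreasing, the Schur order (a) reads exactly \(F\leq G\) on \((0,1)\) together with \(F(1)=G(1)\).

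The engine for both implications is the elementary identity, valid for any decreasing \(f^*\) and any \(c\in\R\),
\[
\int_0^1 (f^*(t)-c)_+\de t=\sup_{x\in[0,1]}\,(F(x)-cx),
\]
which holds because \(x\mapsto F(x)-cx\) has derivative \(f^*(x)-c\) that passes from positive to nonpositive exactly at \(x_c=\lambda(\{f^*>c\})\), so the supremum is attained at \(x_c\) and equals \(\int_0^{x_c}(f^*-c)\de t\). In other words, the stop-loss transform of \(f^*\) is the concave conjugate of \(F\). For (b)\(\Rightarrow\)(a), taking \(\varphi(x)=x\) and \(\varphi(x)=-x\) and using equimeasurability yields \(F(1)=\int_0^1 f^*=\int_0^1 g^*=G(1)\), the equal-means condition; taking \(\varphi(x)=(x-c)_+\) gives \(\sup_x(F(x)-cx)\leq\sup_x(G(x)-cx)\) for every \(c\) via the identity, i.e.\ the concave conjugates satisfy \(F^\#\leq G^\#\) pointwise. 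Since \(F,G\) are continuous and concave they equal their biconjugates, whence \(F(x)=\inf_c(F^\#(c)+cx)\leq\inf_c(G^\#(c)+cx)=G(x)\), which is precisely (a).

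For (a)\(\Rightarrow\)(b), conversely, \(F\leq G\) immediately gives \(\int_0^1(f^*-c)_+\leq\int_0^1(g^*-c)_+\) for all \(c\), while \(F(1)=G(1)\) gives equality of means. It then remains to upgrade from the stop-loss functions \((\cdot-c)_+\) and the affine functions to an arbitrary convex \(\varphi\). I would do this through the standard integral representation \(\varphi(x)=\alpha+\beta x+\int_\R(x-a)_+\de\mu(a)\) with \(\mu=\varphi''\geq0\) a nonnegative measure, after using \((a-x)_+=(x-a)_+-(x-a)\) together with the equal-means identity to absorb the left tail into the affine part, and then integrating the already-established inequality against \(\mu\), justifying the interchange by monotone convergence on truncations and invoking the standing assumption that \(\int\varphi\circ f\de\lambda^d\) and \(\int\varphi\circ g\de\lambda^d\) are finite.

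The main obstacle is this last upgrading step: controlling the tails so that the integral representation of \(\varphi\) and the Fubini/monotone-convergence interchange are legitimate for possibly unbounded \(f^*,g^*\) and for convex \(\varphi\) of arbitrary growth, using only finiteness of the two integrals. The bookkeeping around the decreasing rearrangement (right-continuity, essential extrema, and the sign-change point \(x_c\)) must also be handled with care, but the analytic heart is entirely captured by the concave-conjugate identity above, which makes the two directions essentially dual to one another.
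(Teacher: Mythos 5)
The paper does not prove this lemma at all: it is quoted verbatim from the literature (\cite[Theorem 2.5]{Chong-1974}), so there is no in-paper proof to compare against. Your argument is, on its own merits, a correct and essentially standard self-contained proof. The reduction to dimension one via equimeasurability of \(f\) and \(f^*\) is immediate from the paper's definition of the decreasing rearrangement, and it correctly shows that both (a) and (b) depend only on \(f^*,g^*\). The identity \(\int_0^1(f^*-c)_+\de t=\sup_{x\in[0,1]}(F(x)-cx)\) is valid because \(x\mapsto F(x)-cx\) is concave with derivative \(f^*-c\) changing sign at \(x_c=\lambda(\{f^*>c\})\), and the two directions then follow by concave (Fenchel) duality: for (b)\(\Rightarrow\)(a) the test functions \(\pm x\) and \((x-c)_+\) are integrable against integrable \(f,g\), and a continuous concave \(F\) on \([0,1]\) is recovered from its conjugate (take \(c\) in the superdifferential of \(F\) at \(x_0\)); for (a)\(\Rightarrow\)(b) the upgrade from stop-loss functions to general convex \(\varphi\) via the second-derivative measure, the identity \((a-x)_+=(x-a)_+-(x-a)\) combined with \(\int f^*=\int g^*\), and monotone convergence on truncations is the classical characterization of the convex order by its generating cone (cf.\ \cite[Theorem 1.5.7]{Muller-2002}). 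You flag the tail bookkeeping in this last step as the main gap; it is indeed the only place where details are omitted, but the sketch is the right one and goes through using only integrability of \(f\) and \(g\) (the affine part is finite, the remaining part is a monotone limit of nonnegative combinations of ordered kernels). In short: correct proof of a result the paper imports by citation.
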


To apply the Hardy-Littlewood-Polya theorem for characterizing \(\preccurlyeq_{ccx}\), we use quantile transformations to get back to the unit interval/hypercube. Recall that \(q_Y\) is the left-continuous quantile function of \(Y\).
We also need the multivariate quantile transform \({\bf q}_\XX\colon (0,1)^p \to \R^p\) defined in \eqref{MQT1}. It satisfies \(\XX \eqd {\bf q}_\XX(\UU)\) for \(\UU\) uniform on \((0,1)^p\).
Now, define for 
 \(v\in (0,1)\) the function \(\eta_{Y|\XX}^v\colon (0,1)^p \to \R\) by
\begin{align}\label{desetay}
    \eta_{Y|\XX}^v(\uu):=&P(Y\leq q_Y(v) \mid \XX = {\bf q}_\XX(\uu))\,.
\end{align}

Due to the following result, the conditional convex order can be characterized by a comparison of the functions \(\eta_{Y|\XX}^v\) and \(\eta_{Y'|\XX'}^v\) in Schur order. 

\begin{theorem}[Characterization of \(\preccurlyeq_{ccx}\) by the Schur order]\label{thecharSchur}~\\
    The following statements are equivalent:
    \begin{enumerate}[(i)]
        \item \label{thecharSchur1} \((Y,\XX) \preccurlyeq_{ccx} (Y',\XX')\)
        \item \label{thecharSchur2} \(\eta_{Y|\XX}^v \prec_S \eta_{Y'|\XX'}^v\) for \(\lambda\)-almost all \(v\in (0,1)\).
    \end{enumerate}
\end{theorem}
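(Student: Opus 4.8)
The plan is to establish the equivalence of \eqref{thecharSchur1} and \eqref{thecharSchur2} by chaining three reductions: first pass from the survival formulation in Definition \ref{defccx} to a cumulative one, then transport the conditional probabilities to the unit hypercube via the quantile transform so that the convex order becomes an integral inequality over $(0,1)^p$, and finally invoke the Hardy--Littlewood--Polya theorem (Lemma \ref{lemhlpt}) to recast that inequality as the Schur order $\eta_{Y|\XX}^v \prec_S \eta_{Y'|\XX'}^v$.

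First I would record, via Remark \ref{remccx}\eqref{remccx2} and Proposition \ref{propversccx} (equation \eqref{eqequiccx}), that \((Y,\XX)\preccurlyeq_{ccx}(Y',\XX')\) is equivalent to the cumulative statement
\[
P(Y\leq q_Y(v)\mid\XX)\leq_{cx} P(Y'\leq q_{Y'}(v)\mid\XX')\qquad\text{for }\lambda\text{-almost all }v\in(0,1).
\]
Next, fix such a level \(v\) and write \(h_v(\xx):=P(Y\leq q_Y(v)\mid\XX=\xx)\), so that the function in \eqref{desetay} is \(\eta_{Y|\XX}^v=h_v\circ{\bf q}_\XX\). Since \(\XX\eqd{\bf q}_\XX(\UU)\) for \(\UU\) uniform on \((0,1)^p\) (the defining property of \eqref{MQT1}), we obtain \(P(Y\leq q_Y(v)\mid\XX)=h_v(\XX)\eqd\eta_{Y|\XX}^v(\UU)\), and analogously for the primed objects. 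Because the convex order depends only on the laws of the compared random variables, the cumulative statement for this \(v\) is equivalent to \(\eta_{Y|\XX}^v(\UU)\leq_{cx}\eta_{Y'|\XX'}^v(\UU')\) with \(\UU,\UU'\) uniform on \((0,1)^p\) and \((0,1)^{p'}\). Unfolding the definition of \(\leq_{cx}\) and applying the transfer theorem for image measures turns this into
\[
\int_{(0,1)^p}\varphi\circ\eta_{Y|\XX}^v\,\de\lambda^p\leq\int_{(0,1)^{p'}}\varphi\circ\eta_{Y'|\XX'}^v\,\de\lambda^{p'}\qquad\text{for all convex }\varphi\colon\R\to\R,
\]
which is exactly condition \eqref{lemhlpt2} of Lemma \ref{lemhlpt}, hence equivalent to \(\eta_{Y|\XX}^v\prec_S\eta_{Y'|\XX'}^v\). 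As these equivalences hold for each admissible fixed \(v\), quantifying over \(\lambda\)-almost all \(v\) delivers the claimed equivalence.

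I expect the genuinely delicate step to be the first reduction from survival to cumulative probabilities: this is precisely where the exceptional null set of levels \(v\) enters, since \(F_Y\circ q_Y\) and \(F_{Y'}\circ q_{Y'}\) may differ in their one-sided continuity at the jump points of \(F_Y\) and \(F_{Y'}\), so the two formulations agree only off an at most countable set of levels. The remaining steps are essentially bookkeeping, the quantile transform being measure-preserving and the convex order law-invariant. I would, however, point out that the equal-mean requirement implicit in \(\leq_{cx}\) and the equal-integral condition \(\int_0^1(\eta_{Y|\XX}^v)^*=\int_0^1(\eta_{Y'|\XX'}^v)^*\) built into the definition \eqref{defSchurfun} of \(\prec_S\) coincide---both forcing \(F_Y(q_Y(v))=F_{Y'}(q_{Y'}(v))\) on the relevant levels, i.e.\ the marginal constraint---so that no separate hypothesis is needed and the marginal constraint is automatically encoded in both \eqref{thecharSchur1} and \eqref{thecharSchur2}.
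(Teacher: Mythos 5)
Your proposal is correct and follows essentially the same route as the paper's own proof: reduce to the cumulative formulation via Proposition \ref{propversccx}, identify the law of \(P(Y\leq q_Y(v)\mid\XX)\) with that of \(\eta_{Y|\XX}^v(\UU)\) through the quantile transform, and apply the Hardy--Littlewood--Polya theorem (Lemma \ref{lemhlpt}) levelwise. The paper states these two steps more tersely, but the argument, including the role of the exceptional null set and the automatic matching of the equal-mean and equal-integral conditions, is the same.
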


\begin{remark}
    \begin{enumerate}[(a)]
    \item In our preliminary work \cite{Ansari-Rockel-2023, Ansari-LFT-2023}, we introduced the so-called \emph{Schur order for conditional distributions} through the characterization in Theorem \ref{thecharSchur}\,\ref{thecharSchur2} and derived ordering results for Chatterjee’s rank correlation within bivariate copula models. In this paper, we consider the conditional convex order, which is equivalent to the Schur order for conditional distributions but conceptually simpler. 
    \item Global dependence orderings based on the Schur order in \eqref{defSchurfun} are also studied in \cite{Joe-1987,Scarsini-1990}. In the first reference, densities are compared in the Schur order to analyze the strength of dependence \emph{within} multivariate distributions. The second reference studies so-called Lorentz curves (which are related to the Schur order) to compare likelihood ratios between probability measures with finite support. In contrast, in our approach, we compare conditional distribution functions and analyze the strength of dependence of \(Y\) on \(\XX\). Note that the definition of \(\preccurlyeq_{ccx}\) does not require assumptions like absolute continuity or distributions with finite support.
    \end{enumerate}
\end{remark}

To give a first illustration how to verify \(\preccurlyeq_{ccx}\) by Theorem \ref{thecharSchur}, we construct the decreasing rearrangement of \(\eta_{Y|\XX}^v\) in \eqref{desetay} for bivariate Bernoulli distributions. 

\begin{example}[Bivariate Bernoulli distribution] \label{Ex.Bernoulli}
Consider a bivariate Bernoulli distributed random vector $(Y,X)$ taking on values $(0,0), (0,1), (1,0)$ and $(1,1)$. Denote by $p_{ji} := P(Y=j,X=i)$, $j,i \in \{0,1\}$, the corresponding probability of occurrence such that $q := P(Y=1) = p_{10} + p_{11} \in (0,1)$ and
$p := P(X=1) = p_{01} + p_{11} \in (0,1)$. 
With this notation, the bivariate Bernoulli distribution has in total \(6\) parameters with \(3\) degrees of freedom, where \(q\) determines the distribution of \(Y\) and where \(\alpha := p_{00}/(1-p)\) and \(\beta := p_{01}/p\) determine the conditional probabilities \(P(Y=0\mid X=0)\) and \(P(Y=0\mid X=1)\). 
The conditional distribution function $\eta_{Y|X}^v$ in \eqref{desetay} and its decreasing rearrangement $(\eta_{Y|X}^v)^\ast$ are given by
\begin{align}
    \eta_{Y|X}^v(u) &= \begin{cases}
    \alpha\, \1_{(0,1-p]}(u) + \beta \, \1_{(1-p,1]}(u) & \text{if } v\in (0,1-q],\\
        1 & \text{if } v\in (1-q,1],
    \end{cases}\\
  \label{eqdecrrearrBd}  (\eta_{Y|X}^v)^*(u) &= \begin{cases}
    (\alpha \vee \beta)\, \1_{(0,z]}(u) + (\alpha \wedge \beta)\, \1_{(z,1]}(u) & \text{if } v\in (0,1-q],\\
        1 & \text{if } v\in (1-q,1],
    \end{cases}
\end{align}
where \(z := 1-p\) if \(\alpha \geq \beta\) and \(z := p\) otherwise. Here, \(\vee\) and \(\wedge\) denote the maximum and minimum of two real numbers.
The \emph{integrated} decreasing rearrangement $x \mapsto \int_{0}^x (\eta_{Y|X}^v)^\ast(u) \de u$ in \eqref{defSchurfun} is piecewise linear with a breakpoint at $z$ and slope $\alpha\vee \beta $ on $[0,z]$ and slope $\alpha \wedge \beta $ on $(z,1]$; see Figure \ref{figExBernoulli} for an illustration.
\begin{figure}
    \centering
    \includegraphics[width=0.55\linewidth, trim = 0 25 0 0]{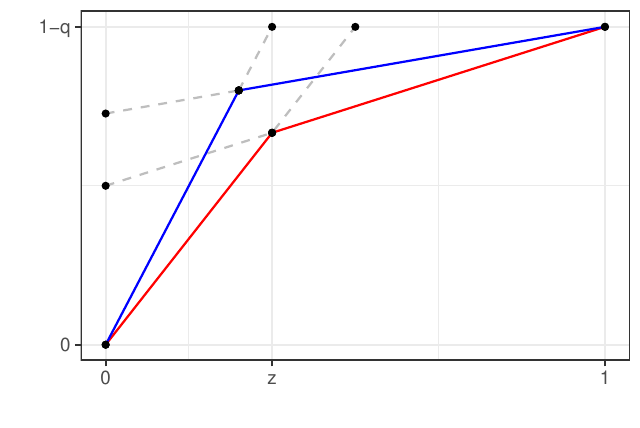}
    \caption{The red curve depicts the piecewise linear \emph{integrated} decreasing rearrangement $x \mapsto \int_{0}^x (\eta_{Y|X}^v)^\ast(u) \de u$, \(v\in (0,1-q]\), of the Bernoulli-distributed random vector $(Y,X)$ discussed in Example \ref{Ex.Bernoulli}.
    For any Bernoulli-distributed random vector $(Y',X')$ with $(Y,X) \preccurlyeq_{ccx} (Y',X')$ or, equivalently, \(\eta_{Y|X}^v \prec_S \eta_{Y'|X'}^v\) for Lebesgue almost all \(v\in (0,1-q]\),
    its integrated decreasing rearrangement (blue curve) lies above the red curve and hence must have a slope greater than $\alpha\vee \beta$ in the interval starting at $0$ and a slope less than $\alpha\wedge \beta$ in the interval ending at $1$.
    Due to the marginal constraint both the red and blue functions equal $1-q$ at point $1$.}
    \label{figExBernoulli}
\end{figure}
\end{example}

To characterize \(\preccurlyeq_{ccx}\) for bivariate Bernoulli distributions in terms of Theorem \ref{thecharSchur}, we need to compare integrals of the decreasing rearrangements in \eqref{eqdecrrearrBd}; see Figure \ref{figExBernoulli}. The following result further illustrates the different concepts of monotone and functional dependence.

\begin{proposition}[\(\preccurlyeq_{ccx}\)-ordering of bivariate Bernoulli distributions] \label{Prop.Bernoulli}~\\
Let $(Y,X)$ and $(Y',X')$ be Bernoulli distributed random vectors with parameters as in Example \ref{Ex.Bernoulli}. Then the following two statements are equivalent:
\begin{enumerate}[(i)]
    \item \label{Prop.Bernoulli1} $(Y,X) \preccurlyeq_{ccx} (Y',X')$,
    \item \label{Prop.Bernoulli2} $q=q^\prime$ as well as \(\alpha'\wedge \beta'\leq \alpha \wedge \beta\) and \(\alpha \vee \beta \leq \alpha'\vee \beta'\).
\end{enumerate}
Further, we have that
\begin{enumerate}[(i)]\setcounter{enumi}{2}
    \item \label{Prop.Bernoulli3} \(X\) and \(Y\) are independent if and only if \(\alpha = \beta\);
    \item \label{Prop.Bernoulli4} \(Y\) perfectly depends on \(X\) if and only if \(\alpha \wedge \beta = 0\) and \(\alpha \vee \beta = 1\); 
    \item \label{Prop.Bernoulli5} \(X\) and \(Y\) are comonotone if and only if \(\alpha = 
    \tfrac{1-q}{1-p}
    \wedge 1\);
    \item \label{Prop.Bernoulli6} \(X\) and \(Y\) are countermonotone if and only if \(\alpha = 0 \vee 
    (1-\tfrac{q}{1-p})
    \).
\end{enumerate}
\end{proposition}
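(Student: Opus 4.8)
The plan is to derive all six items from the Schur-order characterization of $\preccurlyeq_{ccx}$ in Theorem \ref{thecharSchur} together with the explicit integrated decreasing rearrangement computed in Example \ref{Ex.Bernoulli}. First I would record two reductions. Since $q,q'\in(0,1)$, we have $\overline{\Ran(F_Y)}=\{0,1-q,1\}$, so the necessary marginal constraint $\overline{\Ran(F_Y)}=\overline{\Ran(F_{Y'})}$ for $(Y,X)\preccurlyeq_{ccx}(Y',X')$ is equivalent to $q=q'$. Second, for $v\in(1-q,1]$ one has $q_Y(v)=1$, hence $\eta_{Y|X}^v\equiv 1\equiv\eta_{Y'|X'}^v$ under $q=q'$, so the Schur comparison there is vacuous; only $v\in(0,1-q]$ carries information.

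For the equivalence of (i) and (ii), I would fix $v\in(0,1-q]$ and set $G(x):=\int_0^x(\eta_{Y|X}^v)^*\,\mathrm{d}\lambda$ and $G'(x)$ for the primed analogue. By Example \ref{Ex.Bernoulli} both are concave and piecewise linear with $G(0)=G'(0)=0$ and, under the marginal constraint, $G(1)=G'(1)=1-q$; their initial and terminal slopes are $\alpha\vee\beta,\alpha\wedge\beta$ and $\alpha'\vee\beta',\alpha'\wedge\beta'$, respectively. By definition \eqref{defSchurfun}, $\eta_{Y|X}^v\prec_S\eta_{Y'|X'}^v$ is exactly $G\leq G'$ pointwise on $(0,1)$. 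The key geometric fact I would establish is that two such two-piece concave functions sharing both endpoints satisfy $G\leq G'$ if and only if $\alpha\vee\beta\leq\alpha'\vee\beta'$ and $\alpha\wedge\beta\geq\alpha'\wedge\beta'$. The ``only if'' direction is immediate by inspecting the slopes of $G,G'$ as $x\to 0^+$ and as $x\to 1^-$. For the ``if'' direction I would study $H:=G'-G$, which vanishes at $0$ and $1$, and show that its derivative $H'=(\eta_{Y'|X'}^v)^*-(\eta_{Y|X}^v)^*$ changes sign exactly once, from nonnegative to nonpositive; a single sign change together with $H(0)=H(1)=0$ forces $H\geq 0$. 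Since these slope conditions are independent of $v\in(0,1-q]$, the ``Lebesgue-almost all $v$'' quantifier in Theorem \ref{thecharSchur} collapses to the single pair of inequalities in (ii).

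The single-crossing step is the main obstacle, since the breakpoints $z$ and $z'$ of $G$ and $G'$ need not coincide and their order is not determined a priori. I would handle this by splitting into the cases $z\leq z'$ and $z>z'$ and checking on each of the at most three resulting subintervals that $H'$ is first nonnegative and then nonpositive; this uses only the chain $\alpha'\vee\beta'\geq\alpha\vee\beta\geq\alpha\wedge\beta\geq\alpha'\wedge\beta'$ and the fact that the step functions $(\eta_{Y|X}^v)^*$ and $(\eta_{Y'|X'}^v)^*$ are nonincreasing. Everything else in this part is routine.

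Finally, the items (iii)--(vi) follow directly, without the Schur machinery. For (iii) I would use that independence is equivalent to $\eta_{Y|X}^v$ being a.s.\ constant for all $v$, which for $v\in(0,1-q]$ means $\alpha\vee\beta=\alpha\wedge\beta$, i.e.\ $\alpha=\beta$. For (iv) I would use that perfect dependence is equivalent to $\eta_{Y|X}^v\in\{0,1\}$ a.s.\ for all $v$, which for $v\in(0,1-q]$ forces $\{\alpha,\beta\}\subseteq\{0,1\}$; since $q\in(0,1)$ rules out $\alpha=\beta$, this gives $\alpha\wedge\beta=0$ and $\alpha\vee\beta=1$. For (v) and (vi) I would evaluate $p_{00}$ at the upper and lower Fr\'echet bounds, obtaining $p_{00}=\min(1-p,1-q)$ and $p_{00}=\max(1-p-q,0)$, respectively, and substitute into $\alpha=p_{00}/(1-p)$ to get $\alpha=\tfrac{1-q}{1-p}\wedge 1$ and $\alpha=0\vee(1-\tfrac{q}{1-p})$.
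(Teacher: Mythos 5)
Your proposal is correct and follows essentially the same route as the paper: the equivalence of (i) and (ii) is reduced, via Theorem \ref{thecharSchur} and the explicit piecewise-linear integrated rearrangements of Example \ref{Ex.Bernoulli}, to comparing the initial and terminal slopes $\alpha\vee\beta$, $\alpha\wedge\beta$ of two concave two-piece functions with common endpoints, and items (iii)--(vi) are handled by the same direct computations with $p_{00}$ and the Fr\'echet bounds. The only difference is one of completeness: the paper delegates the geometric step to Figure \ref{figExBernoulli}, whereas you spell out the single-crossing argument (case split on $z\lessgtr z'$) for the ``if'' direction, which is a welcome addition rather than a deviation.
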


\begin{remark} Consider the setting in Proposition \ref{Prop.Bernoulli}.
\begin{enumerate}[(a)]
    \item In \ref{Prop.Bernoulli2}, the condition \(q=q'\) follows from the marginal constraint of the conditional convex order, i.e., \(\{0,1-q,1\} = \overline{\Ran(F_Y)} = \overline{\Ran(F_{Y'})} = \{0,1-q',1\}\). 
    Large and small values of \(\alpha\) and \(\beta\) yield strong couplings between \(Y\) and \(X\), and hence a stronger functional dependence in conditional convex order than moderate values of \(\alpha\) and \(\beta\).
    \item There are several alternatives to characterize independence, for example, \(\alpha = 1-q\).
    \item In the characterization of perfect dependence in \ref{Prop.Bernoulli4}, the condition \(\alpha\wedge \beta = 0\) \emph{and} \(\alpha\vee \beta = 1\) corresponds to the case \(Y = X\) \emph{or} \(Y = 1-X\) almost surely. 
    In these cases, it is \(p=q\) and \(p=1-q\), respectively. Due to \ref{Prop.Bernoulli5} and \ref{Prop.Bernoulli6}, perfect dependence of \(Y\) on \(X\) implies comonotonicity or countermonotonicity of \((Y,X)\); however, the converse is not true because \((Y,X)\) being comonotone (resp. countermonotone) only means that \(Y = q_Y(U)\) and \(X = q_X(U)\) (resp. \(X = q_X(1-U)\)) a.s. for \(U\sim U(0,1)\). Consequently, the extreme cases of $\leq_c$ and $\preccurlyeq_{ccx}$ generally differ; see also Section \ref{sec_cccx}.
    \item \(Y\) is SI in \(X\) if and only if \(\alpha\geq \beta\). If, additionally, \(p=p'\), \(q=q'\), and \(\alpha'\geq \beta'\), then we have \((Y,X)\preccurlyeq_{ccx} (Y',X')\) \(\Longleftrightarrow\) \(\alpha\leq \alpha'\) \(\Longleftrightarrow\) \(p_{00}\leq p_{00}'\) \(\Longleftrightarrow\) \((Y,X)\leq_c (Y',X')\). This confirms Proposition \ref{rembivSchur} in the case of Bernoulli distributions.
\end{enumerate}
\end{remark}

\subsection{A dimension reduction principle}\label{secdimredprin}

In this subsection, we reduce \((Y,\XX)\) to a \(\preccurlyeq_{ccx}\)-equivalent bivariate SI random vector using the concept of decreasing rearrangements. The dimension reduction principle underlies Theorem \ref{propcharS}\,\ref{propcharS2} and Theorem \ref{theadderrmod} as well as the results in Sections \ref{sec52} and \ref{secrearrdepmea}.
For convenience, we work with cumulative probabilities and distribution functions instead of survival probabilities and survival functions.

To transform \((Y,\XX)\) into a \(\preccurlyeq_{ccx}\)-equivalent bivariate random vector, we denote by \([0,1]\ni u\mapsto F_u(y)\) the decreasing rearrangement of \(\uu \mapsto P(Y\leq y \mid \XX = q_\XX(\uu))\). Recall that \(u\mapsto F_u(y)\) is \(\lambda\)-almost surely uniquely determined. In the sequel, we will consider versions such that \(y\mapsto F_u(y)\) is a distribution function for all \(u\in (0,1)\); see Lemma \ref{lemrearrqt} for the existence of such rearrangements. 
Setting \(y = q_Y(v)\) for \(v\in (0,1)\), we observe that the function \(u \mapsto F_u(q_Y(v))\) is the decreasing rearrangement of \(\eta_{Y|\XX}^v\) in \eqref{desetay}, i.e.
\begin{align}\label{definreacdf}
     (\eta_{Y|\XX}^v)^*(u) = F_u(q_Y(v)) \quad \text{for all } u\in (0,1).
\end{align}
Using that \(F_u\) is a distribution function on \(\R\), we define its associated quantile function by
\begin{align}\label{defyu}
    q_{Y;\XX}^{\uparrow u}(t) := F_u^{-1}(t), \quad t\in (0,1),
\end{align}
which we refer to as \emph{rearranged quantile transform}. To explain the choice of the notation in \eqref{defyu}, let \(V\sim U(0,1)\).
First, note that \(q_{Y;\XX}^{\uparrow u}(V)\) is a random variable with distribution function \(F_u\).
Since, by construction, \(F_u(y)\) is decreasing in \(u\) for all \(y\in \R\), the family \(\{q_{Y;\XX}^{\uparrow u}(V)\}_{u\in [0,1]}\) of random variables is increasing in \(u\) with respect to the \emph{stochastic order}. This means that \(u_1\leq u_2\) implies \(\E f(q_{Y;\XX}^{\uparrow u_1}(V)) \leq \E f(q_{Y;\XX}^{\uparrow u_2}(V))\) for all increasing functions \(f\) such that the expectations exist.

\begin{figure}
    \centering
\begin{tikzpicture}[scale=1.2, >=stealth, every node/.style={anchor=center}]

\node (A) at (0, 2.2) {
            $\frac{1}{40}
            \begin{bmatrix}
            \makebox[1em]{3} & \makebox[1em]{4} & \makebox[1em]{1} & \makebox[1em]{2} \\
            \makebox[1em]{4} & \makebox[1em]{1} & \makebox[1em]{0} & \makebox[1em]{5} \\
            \makebox[1em]{1} & \makebox[1em]{5} & \makebox[1em]{3} & \makebox[1em]{1} \\
            \makebox[1em]{2} & \makebox[1em]{0} & \makebox[1em]{6} & \makebox[1em]{2}
            \end{bmatrix}$
        };

        \node (B) at (0, 0) {
            $\frac{1}{10}
            \begin{bmatrix}
            \makebox[1em]{10} & \makebox[1em]{10} & \makebox[1em]{10} & \makebox[1em]{10} \\
            \makebox[1em]{7} & \makebox[1em]{6} & \makebox[1em]{9} & \makebox[1em]{8} \\
            \makebox[1em]{3} & \makebox[1em]{5} & \makebox[1em]{9} & \makebox[1em]{3} \\
            \makebox[1em]{2} & \makebox[1em]{0} & \makebox[1em]{6} & \makebox[1em]{2}
            \end{bmatrix}$
        };

        \node (C) at (4, 0) {
            $\frac{1}{10}
            \begin{bmatrix}
            \makebox[1em]{10} & \makebox[1em]{10} & \makebox[1em]{10} & \makebox[1em]{10} \\
            \makebox[1em]{9} & \makebox[1em]{8} & \makebox[1em]{7} & \makebox[1em]{6} \\
            \makebox[1em]{9} & \makebox[1em]{5} & \makebox[1em]{3} & \makebox[1em]{3} \\
            \makebox[1em]{6} & \makebox[1em]{2} & \makebox[1em]{2} & \makebox[1em]{0}
            \end{bmatrix}$
        };

        \node (D) at (4, 2.2) {
            $\frac{1}{40}
            \begin{bmatrix}
            \makebox[1em]{1} & \makebox[1em]{2} & \makebox[1em]{3} & \makebox[1em]{4} \\
            \makebox[1em]{0} & \makebox[1em]{3} & \makebox[1em]{4} & \makebox[1em]{3} \\
            \makebox[1em]{3} & \makebox[1em]{3} & \makebox[1em]{1} & \makebox[1em]{3} \\
            \makebox[1em]{6} & \makebox[1em]{2} & \makebox[1em]{2} & \makebox[1em]{0}
            \end{bmatrix}$
        };

\draw[->, thick] (A) -- (B); 
\draw[->, thick] (B) -- (C); 
\draw[->, thick] (C) -- (D); 

\end{tikzpicture}
\caption{Example for the construction of the bivariate SI random vector \((q_{Y;X}^{\uparrow U}(V),U)\) in Theorem \ref{lemtrafSI}\,\ref{lemtrafSI1} when a discrete bivariate random vector \((Y,X)\) maps into a grid \(\{a_1,\ldots,a_4\}\times \{b_1,\ldots,b_4\}\): The top left matrix illustrates the mass distribution of \((Y,X)\); the bottom left matrix describes the associated conditional distribution functions \(\eta_{Y|X}^v\,;\) the bottom right matrix is the rearranged conditional distribution function \(v \mapsto F_u(q_Y(v))\) which is defined in \eqref{definreacdf} and decreases in \(u\); the top right matrix describes the mass distribution of the SI random vector \((q_{Y;X}^{\uparrow U}(V),U)\) defined by \eqref{defyu}.}
    \label{fig:enter-label}
\end{figure}

Using the rearranged quantile transform constructed above, the following result shows that every random vector \((Y,\XX)\) can be reduced to a \(\preccurlyeq_{ccx}\)-equivalent bivariate SI random vector.

\begin{theorem}[Dimension reduction to bivariate SI distributions]\label{lemtrafSI}~\\
Let \(U,V\sim U(0,1)\) be independent. Then the rearranged quantile transform defined by \eqref{defyu} has the following properties:
    \begin{enumerate}[(i)]
    \item \label{lemtrafSI0} \(q_{Y;\XX}^{\uparrow U}(V)\eqd Y\,,\)
        \item \label{lemtrafSI1} \((q_{Y;\XX}^{\uparrow U}(V),U)\) is a bivariate SI random vector,
        \item \label{lemtrafSI2} \((q_{Y;\XX}^{\uparrow U}(V), U) =_{ccx} (Y,\XX)\,.\) 
    \end{enumerate}
\end{theorem}

\begin{remark}\label{remtrafSI}
        Due to Theorem \ref{lemtrafSI}, the rearranged quantile transform relates a \((1+p)\)-dimensional random vector \((Y,\XX)\) to a bivariate SI random vector such that the dependence information in the sense of \(\preccurlyeq_{ccx}\) remains invariant. Consequently, the strength of functional dependence is preserved in the sense that all dependence measures which are monotone in \(\preccurlyeq_{ccx}\) are invariant under this transformation. 
        The reduced random vector in \eqref{eqredrv} is based on this dimension-reduction principle. Note that our transformation extends the rearrangement of bivariate copulas in \cite[Proposition 3.17]{Ansari-2021} and \cite[Theorem 2.3]{strothmann2022} to arbitrary marginal distributions and to any multivariate random vector \(\XX\).
\end{remark}


For a bivariate random vector with finite support, the construction of the rearranged quantile transform in \eqref{defyu} is explained in Figure \ref{fig:enter-label}.
For a multivariate normal random vector, we perform the dimension reduction and the construction of the bivariate SI random vector \((q_{Y;\XX}^{\uparrow U}(V),U)\) in the following example.
Note that we use the basic ideas from this example to study in Section \ref{secsufdimred} sufficient conditions for verifying \(\preccurlyeq_{ccx}\).

\begin{example}[Dimension reduction for multivariate normal distribution]\label{exdimrednormal}~\\
    Let \((Y,\XX)\sim N(\mathbf{0},\Sigma)\) be a \((1+p)\)-dimensional normal random vector. In the sequel, we construct the bivariate SI random vector \((q_{Y;\XX}^{\uparrow U}(V),U)\) in Theorem \ref{lemtrafSI} based on the dimenion reduction principle in \eqref{definreacdf} and \eqref{defyu}. Consider the decomposition of the covariance matrix into 
    \begin{align}\label{decompSigma}
        \Sigma =  \begin{pmatrix}
         \sigma_{Y}^2 & \Sigma_{\XX,Y} \\ \Sigma_{Y,\XX} & \Sigma_\XX
    \end{pmatrix}.
    \end{align}
    Define \(S := A \XX\) for \(A:= \Sigma_{Y,\XX}\Sigma_{\XX}^- / \sqrt{\Sigma_{Y,\XX}\Sigma_{\XX}^- \Sigma_{\XX,Y}}\,,\) where \(\Sigma_\XX^-\) denotes a generalized inverse of \(\Sigma_\XX\) such as the Moore-Penrose  inverse. Key for the dimension reduction principle and for the construction of the rearranged quantile transform \(q_{Y;\XX}^{\uparrow u}\) in \eqref{defyu} is the identity
    \begin{align}\label{eqexdimrednormal1}
        (Y\mid \XX = \xx) \eqd (Y\mid S = A\xx) \quad \text{for } P^\XX-\text{almost all } \xx\in \R^p\,;
    \end{align}
    see \cite[Proof of Proposition 2.7]{ansari2023MFOCI}. 
    For \(k:= \rank(\Sigma_\XX)\,,\)
    let \(B\) be a \(p\times k\) matrix such that \(\Sigma_\XX = BB^T.\) Then the transformed conditional distribution function \(\eta_{Y|\XX}^v\) in \eqref{desetay} is given by
    \begin{align}\label{eqexdimrednormal2}
    \begin{split}
        \eta_{Y|\XX}^v(\uu) &= P(Y\leq q_Y(v) \mid \XX = B {\bf \Phi}^{-1}(\uu))\\
        &= P(Y\leq \sigma_Y \Phi^{-1}(v) \mid S = AB {\bf \Phi}^{-1}(\uu)) \\
        &= P(Y\leq \sigma_Y \Phi^{-1}(v)\mid S = \Phi^{-1}(u))
    \end{split}
    \end{align}
    for \(u = \Phi(AB{\bf \Phi}^{-1}(\uu))\,,\)
    where \({\bf \Phi}^{-1}(\uu)\) denotes a column vector with the standard normal quantile function applied to each component of \(\uu\in(0,1)^k\,.\)
Note that \((Y,S)\) is bivariate normal with correlation
\begin{align}\label{eqexdimrednormal2ab}
    \Cor(Y,S) = \frac{\Cov(Y,A\XX)}{\sqrt{\Var(Y)\Var(A\XX)}} = \sqrt{\frac{\Sigma_{Y,\XX} \Sigma_\XX^- \Sigma_{\XX,Y}}{\sigma_Y^2}} \geq 0,
\end{align}
 which implies that \(Y\) is SI in \(S\,;\) see e.g. \cite{Rueschendorf-1981}.
Let \(\UU\) be a random vector that is uniform on \((0,1)^k\) and define \(U := \Phi(S) = \Phi(AB{\bf \Phi}^{-1}(\UU))\). Then \(U\) is uniform on \((0,1)\) because \(S\) is standard normal.
The decreasing rearrangement of \(\eta_{Y|\XX}^v\) in \eqref{eqexdimrednormal2} is given by
\begin{align}\label{eqexdimrednormal2b}
    (\eta_{Y|\XX}^v)^*(u) = P( Y\leq \sigma_Y \Phi^{-1}(v)\mid S = \Phi^{-1}(u))
\end{align}
since the right-hand side of \eqref{eqexdimrednormal2b} is decreasing in \(u\) and
\begin{align}\label{eqexdimrednormal3}
\begin{split}
    \lambda(\{u \mid (\eta_{Y|\XX}^v)^*(u) \geq z\}) 
    &= P(\{\omega \mid (\eta_{Y|\XX}^v)^*(U(\omega)) \geq z\}) \\
    &= P(\{\omega \mid \eta_{Y|\XX}^v(\UU(\omega)) \geq z\}) =  \lambda^p(\{\uu \mid \eta_{Y|\XX}^v(\uu) \geq z\}) 
\end{split}
\end{align}
for all \(z\in \R\,.\) The second equality in \eqref{eqexdimrednormal3} follows from \eqref{eqexdimrednormal2}. 
For the first equality in \eqref{eqexdimrednormal3}, we use that \(U\) is uniform on \((0,1)\); for the third equality we use that \(\UU\) is uniform on \((0,1)^k\,.\)
For fixed \(u\in (0,1)\,,\) the rearranged distribution function \(F_u\) in \eqref{definreacdf} is given by
\begin{align}\label{eqexdimrednormal4}
    F_u(y) =  P( Y\leq y \mid S = \Phi^{-1}(u)) =  (\eta_{Y|\XX}^v)^*(u)
\end{align}
for \(y = q_Y(v) = \sigma_Y \Phi^{-1}(v)\,.\)
Hence, the rearranged quantile transform defined in \eqref{defyu} is
\begin{align}
    q_{Y;\XX}^{\uparrow u}(t) = F_u^{-1}(t) = F_{Y|S=\Phi^{-1}(u)}^{-1}(t)\,, \quad t\in (0,1)\,.
\end{align}
Finally, the bivariate SI random vector in Theorem \ref{lemtrafSI}\,\ref{lemtrafSI1} is given by
\begin{align}\label{eqexdimrednormal6}
    (q_{Y;\XX}^{\uparrow U}(V),U) = (F_{Y|S=\Phi^{-1}(U)}^{-1}(V),U).
\end{align}
Note that the left-hand side in \eqref{eqexdimrednormal6} satisfies \((q_{Y;\XX}^{\uparrow U}(V),U) =_{ccx} (Y,\XX)\), which follows from \eqref{eqexdimrednormal2} and \eqref{eqexdimrednormal4} and verifies Theorem \ref{lemtrafSI}\,\ref{lemtrafSI2}.  
Due to \eqref{eqexdimrednormal1}, we know that \((Y,\XX) =_{ccx}  (Y,S)\). 
Hence, the reduced random vectors in Theorem \ref{propcharS} \ref{propcharS2} satisfy 
\begin{align}\label{eqexdimrednormal7}
    (F_Y\circ q_{Y;\XX}^{\uparrow U}(V),U) \eqd (F_Y\circ Y,\Phi(S))
\end{align}
in the case of the multivariate normal distribution.
\end{example}

\section{Verifying \(\preccurlyeq_{ccx}\)}\label{sec52}

In this section, we provide sufficient conditions for verifying the conditional convex order. These conditions are based on the dimension reduction principle from Section \ref{secdimredprin} and on stochastic monotonicity assumptions which simplify a verification of the Schur order criterion in Theorem \ref{thecharSchur}. 
In the first part of this section, we characterize the \(\preccurlyeq_{ccx}\)-order for multivariate normal distributions.
In the second part, we verify the conditional convex order for various copula-based models that generalize the additive error model in \eqref{eqadderrmod}.



\subsection{Multivariate normal distribution}\label{secmultnorm}

The following result characterizes \(\preccurlyeq_{ccx}\) for the multivariate normal distribution. Its proof is based on the dimension reduction performed in Example \ref{exdimrednormal}. 

\begin{theorem}[Conditional convex order for multivariate normal distribution]\label{propschurnormal}~\\
    Let \((Y,\XX)\sim N(\boldsymbol{\mu},\Sigma)\) and \((Y',\XX')\sim N(\boldsymbol{\mu}',\Sigma')\) be multivariate normal random vectors with non-degenerate components \(Y\) and \(Y'.\) For \(\Sigma\) and \(\Sigma'\) decomposed as in \eqref{decompSigma}, we have 
    \begin{align}\label{propschurnormal1}
        (Y,\XX) \preccurlyeq_{ccx} (Y',\XX') \quad \Longleftrightarrow \quad  \,\Sigma_{Y,\XX}\Sigma_\XX^- \Sigma_{\XX,Y} / \sigma_{Y}^2 \,
        \leq  \, \Sigma_{Y',\XX'}\Sigma_{\XX'}^- \Sigma_{\XX',Y'} / \sigma_{Y'}^2.
    \end{align}
\end{theorem}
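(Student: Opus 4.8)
The plan is to recognize the quantity \(\Sigma_{Y,\XX}\Sigma_\XX^-\Sigma_{\XX,Y}/\sigma_Y^2\) as the squared multiple correlation coefficient of \(Y\) on \(\XX\), and to collapse the multivariate comparison onto a one-parameter family of bivariate normal distributions via the dimension reduction principle of Example \ref{exdimrednormal}. Since \(Y\) and \(Y'\) are non-degenerate normal, both distribution functions are continuous, so the marginal constraint \(\overline{\Ran(F_Y)} = \overline{\Ran(F_{Y'})} = [0,1]\) holds automatically. Using transformation invariance (Axiom \ref{axdepord6}), I would first standardize and assume without loss of generality that \(Y\) and \(Y'\) are standard normal; note that both sides of \eqref{propschurnormal1} are invariant under strictly increasing affine transformations of \(Y\), since scaling \(Y\) by \(c>0\) multiplies \(\Sigma_{Y,\XX}\) by \(c\) and \(\sigma_Y^2\) by \(c^2\).

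Next, I would invoke the construction of Example \ref{exdimrednormal}: with \(S := A\XX\) and \(A = \Sigma_{Y,\XX}\Sigma_\XX^-/\sqrt{\Sigma_{Y,\XX}\Sigma_\XX^-\Sigma_{\XX,Y}}\), the identity \eqref{eqexdimrednormal1} together with Proposition \ref{lemtrafSI}\eqref{lemtrafSI2} gives \((Y,\XX) =_{ccx} (Y,S)\), where \((Y,S)\) is bivariate normal with \(\Var(S)=1\) and, by \eqref{eqexdimrednormal2ab}, \(\rho := \Cor(Y,S) = \sqrt{\Sigma_{Y,\XX}\Sigma_\XX^-\Sigma_{\XX,Y}/\sigma_Y^2}\geq 0\). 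The analogous reduction for \((Y',\XX')\) yields \((Y',\XX') =_{ccx} (Y',S')\) with \(\rho' := \Cor(Y',S') = \sqrt{\Sigma_{Y',\XX'}\Sigma_{\XX'}^-\Sigma_{\XX',Y'}/\sigma_{Y'}^2}\geq 0\). Because \(\preccurlyeq_{ccx}\) is a quasi-order (Axiom \ref{axdepord1}), transitivity combined with these two equivalences reduces the claim to
\[
(Y,S)\preccurlyeq_{ccx}(Y',S') \quad\Longleftrightarrow\quad \rho\leq\rho',
\]
since for \(\rho,\rho'\geq 0\) one has \(\rho\leq\rho'\) iff \(\rho^2\leq\rho'^2\), and \(\rho^2,\rho'^2\) are exactly the two sides of \eqref{propschurnormal1}.

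It then remains to establish this bivariate equivalence. Here \((Y,S)\) and \((Y',S')\) both have standard normal marginals, hence lie in a common Fréchet class, and for bivariate normal vectors \(\rho\geq 0\) forces the first component to be stochastically increasing in the second (see \cite{Rueschendorf-1981}, as already used in Example \ref{exdimrednormal}). By Proposition \ref{rembivSchur}, for SI vectors from the same Fréchet class the conditional convex order coincides with the concordance order, so I would reduce to showing \((Y,S)\leq_c(Y',S')\) iff \(\rho\leq\rho'\). This is the classical monotonicity of the bivariate normal distribution function in its correlation parameter (a Slepian-type inequality; see \cite{Tchen-1980}), which delivers the equivalence on \([0,1]\).

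The main obstacle lies less in this analytic core—which is routine once the dimension reduction and Proposition \ref{rembivSchur} are in hand—than in the degenerate case \(\rho=0\), where the normalizing factor in \(A\) vanishes and \(S\) is not defined. I would treat this separately: \(\rho=0\) is equivalent to \(\Sigma_{Y,\XX}\Sigma_\XX^-\Sigma_{\XX,Y}=0\), i.e.\ to independence of \(Y\) and \(\XX\), so \((Y,\XX)\) is then a \(\preccurlyeq_{ccx}\)-minimal element by Axiom \ref{axdepord2}; both sides of the claimed equivalence are consistent with the convention \(\rho=0\) (replacing \((Y,S)\) by an independent pair with standard normal marginals, which is the \(\rho=0\) member of the same family), and the comparison goes through. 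I would also confirm at the outset that the expression \(\Sigma_{Y,\XX}\Sigma_\XX^-\Sigma_{\XX,Y}\) is independent of the chosen generalized inverse and that \(\Var(S)=1\), which follows from the identity \(\Sigma_\XX^-\Sigma_\XX\Sigma_\XX^-=\Sigma_\XX^-\), so that \(\rho\) is genuinely well defined.
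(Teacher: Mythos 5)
Your proposal is correct and follows essentially the same route as the paper: dimension reduction to the bivariate normal pair \((Y,S)\) via Example \ref{exdimrednormal}, passage to the concordance order (the paper invokes Theorem \ref{propcharS} directly on the reduced random vectors, while you standardize the marginals and use Proposition \ref{rembivSchur}, which amounts to the same thing), and then monotonicity of the Gaussian copula in its correlation parameter. Your explicit treatment of the degenerate case \(\varrho=0\) and of the well-definedness of \(\Sigma_{Y,\XX}\Sigma_\XX^-\Sigma_{\XX,Y}\) is a welcome addition that the paper leaves implicit.
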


\begin{remark}
\begin{enumerate}[(a)]\label{remnorm}
\item 
Due to Theorem \ref{propschurnormal}, multivariate normal random vectors \((Y,\XX)\) and \((Y',\XX')\) are always comparable in \(\preccurlyeq_{ccx}\), provided \(\sigma_Y\) and \(\sigma_{Y'}\) are positive. It is neither assumed that the random vectors have the same dimension nor that the covariance matrices are positive definite. No conditions are imposed on \(\boldsymbol{\mu}\) and \(\boldsymbol{\mu}'\) either.
 Interestingly, by \cite[Proposition 2.7]{ansari2023MFOCI}, either condition in \eqref{propschurnormal1} is equivalent to \(\xi(Y,\XX)\leq \xi(Y',\XX')\).
\item \label{remnorm2} In Theorem \ref{propschurnormal}, if \(\sigma_Y = \sigma_{Y'}\), the conditional convex order reduces to comparing the Schur complements of the covariance matrices.
Since \(\E[Y\mid \XX = \xx] = \Sigma_{Y,\XX}\Sigma_\XX^- \xx\), \(\preccurlyeq_{ccx}\) in \eqref{propschurnormal1} also underlies the fraction of explained variance \(\Var(\E[Y\mid \XX])/\Var(Y) = \Sigma_{Y,\XX}\Sigma_\XX^- \Sigma_{\XX,Y} / \sigma_{Y}^2\) and the kernel partial correlation for a linear kernel \cite[Remark 18]{deb2020b}.
\end{enumerate}
\end{remark}

In Example \ref{exmultnormsch} below, we verify the ordering condition \eqref{propschurnormal1} for the equicorrelated normal distribution and the multivariate normal distribution with independent predictor variables.

\subsection{Sufficient dimension reduction}\label{secsufdimred}

To derive sufficient conditions for verifying \(\preccurlyeq_{ccx}\),
we generalize the dimension reduction principle discussed for the multivariate normal distribution in Example \ref{exdimrednormal}. To this end, we extend the identity in \eqref{eqexdimrednormal1} to general transformations and denote a function \(g\colon \R^p\to \R\) as \emph{sufficient} for (the conditional distribution) \(Y|\XX\) if 
\begin{align}\label{eqsuffdimred}
    (Y\mid \XX=\xx) \eqd (Y \mid g(\XX) = g(\xx)) \quad \text{for } P^\XX\text{-almost all } \xx\in \R^p\,.
\end{align}
The terminology of \(g\) being sufficient for \(Y|\XX\) coincides with the classical concept of a sufficient statistic \(g\) for the family of conditional distributions \(\{P^{Y|\XX=\xx} \mid \xx\in \R^p\}\).
For a linear function \(g,\) the concept in \eqref{eqsuffdimred} is also referred to as \emph{sufficient for dimension reduction} in \cite{Chiaromonte-2002,Li-1991,Li-2007,Li-2009}.
An example of a sufficient function for \(Y| \XX\) is the function \(f\) in the additive error model \eqref{eqadderrmod} or the linear transformation \(g(\xx) = A \xx\) in Example \ref{exdimrednormal}. Note that \eqref{eqsuffdimred} is equivalent to conditional independence of \(Y\) and \(\XX\) given \(g(\XX)\); see \cite[Proposition 6.6]{Kallenberg-2002}.

In the following theorem, we provide simple conditions for verifying \(\preccurlyeq_{ccx}\) under stochastic monotonicity assumptions.
We write \(Y\uparrow_{st} Z\) if \(Y\) is SI in \(Z\), and similarly, \(Y\downarrow_{st} Z\) if \(Y\) is stochastically decreasing in \(Z\) (i.e., \(-Y\uparrow_{st} Z\)).



\begin{theorem}[\(\preccurlyeq_{ccx}\)-order for copula-based models]\label{propmvSchur}~\\
    Assume that \(g\colon \R^p\to \R\) is sufficient for \(Y\mid \XX\) and that \(h\colon \R^{p'}\to \R\) is sufficient for \(Y'\mid \XX'\,.\) Then the following statements hold true:
    \begin{enumerate}[(i)]
        \item \label{propmvSchur1} If \((Y, g(\XX))\preccurlyeq_{ccx} (Y', h(\XX'))\,,\) then \((Y, \XX)\preccurlyeq_{ccx} (Y',\XX')\,.\)
        \item \label{propmvSchur2} Assume that \(\overline{\Ran(F_Y)} = \overline{\Ran(F_{Y'})}\,,\) 
        \(Y\uparrow_{st} g(\XX)\), and \(Y'\uparrow_{st} h(\XX')\,.\) Then \(C_{Y,g(\XX)}\leq_{c} C_{Y',h(\XX')}\) implies \((Y,\XX)\preccurlyeq_{ccx} (Y',\XX')\,.\)
        \item \label{propmvSchur3} Assume that \(\overline{\Ran(F_Y)} = \overline{\Ran(F_{Y'})}\,,\) 
        \(Y\downarrow_{st} g(\XX)\), and \(Y'\downarrow_{st} h(\XX')\,.\) Then \(C_{Y,g(\XX)}\leq_{c} C_{Y',h(\XX')}\) implies \((Y,\XX)\preccurlyeq_{ccx} (Y',\XX')\,.\)
    \end{enumerate}
\end{theorem}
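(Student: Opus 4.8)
The plan is to reduce all three parts to the bivariate, stochastically increasing situation, where the conditional convex order is already tied to the concordance order by Proposition \ref{rembivSchur}. The linchpin is that sufficiency collapses \(\XX\) onto \(g(\XX)\) without changing \(\preccurlyeq_{ccx}\).

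For part \eqref{propmvSchur1} I would first use that \eqref{eqsuffdimred} is equivalent to conditional independence of \(Y\) and \(\XX\) given \(g(\XX)\). Once the conditional law of \(Y\) given \(\XX\) factors through \(g(\XX)\), both sides below are versions of \(\E[\1_{\{Y\geq q_Y(v)\}}\mid \XX]\), so for every \(v\in(0,1)\)
\[
 P(Y\geq q_Y(v)\mid \XX) = P(Y\geq q_Y(v)\mid g(\XX)) \quad\text{a.s.}
\]
Hence the two random variables compared in Definition \ref{defccx} have identical laws, giving \((Y,\XX) =_{ccx} (Y,g(\XX))\), and likewise \((Y',\XX')=_{ccx}(Y',h(\XX'))\). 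Part \eqref{propmvSchur1} then follows from the hypothesis \((Y,g(\XX))\preccurlyeq_{ccx}(Y',h(\XX'))\) by reflexivity and transitivity (Axiom \ref{axdepord1}).

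By part \eqref{propmvSchur1}, for \eqref{propmvSchur2} it suffices to prove the bivariate statement for \(Z:=g(\XX)\) and \(Z':=h(\XX')\): under the marginal constraint, with \(Y\uparrow_{st}Z\) and \(Y'\uparrow_{st}Z'\), the comparison \(C_{Y,Z}\leq_{c} C_{Y',Z'}\) forces \((Y,Z)\preccurlyeq_{ccx}(Y',Z')\). This is exactly Proposition \ref{rembivSchur}, so I would quote it. Its mechanism is that \(Y\uparrow_{st}Z\) makes \(\eta^v_{Y|Z}\) decreasing, whence \((\eta^v_{Y|Z})^*=\eta^v_{Y|Z}\) and \(\int_0^x \eta^v_{Y|Z}(t)\,\de t = C_{Y,Z}(v,x)\); the Schur criterion of Theorem \ref{thecharSchur} then reduces to the pointwise inequality \(C_{Y,Z}(v,x)\leq C_{Y',Z'}(v,x)\), i.e.\ to \(\leq_{c}\).

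For part \eqref{propmvSchur3} I would reflect the predictor. Within the reduced bivariate vector, \(z\mapsto -z\) is a bijection, so Axiom \ref{axdepord6} gives \((Y,Z)=_{ccx}(Y,-Z)\) and \((Y',Z')=_{ccx}(Y',-Z')\), while \(Y\downarrow_{st}Z\) turns into \(Y\uparrow_{st}(-Z)\). The one delicate step — and the main obstacle — is tracking what the hypothesis \(C_{Y,Z}\leq_{c}C_{Y',Z'}\) becomes for the reflected copulas: reflecting the second coordinate sends \(C(s,w)\) to \(s-C(s,1-w)\) (with the corresponding adjustment at jump points of the margins), so the concordance comparison reverses under reflection. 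Feeding the reflected, now-SI pair into part \eqref{propmvSchur2} then yields the claim, with the orientation of the concordance comparison dictated by this reflection, so this bookkeeping is exactly the point that must be checked carefully. In all cases the non-continuous setting requires the same attention to copula non-uniqueness and to the role of \(\overline{\Ran(F_Y)}=\overline{\Ran(F_{Y'})}\) as the marginal constraint fixing the value at \(x=1\).
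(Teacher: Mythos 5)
Parts (i) and (ii) of your proposal are correct and follow essentially the paper's route. For (i) the paper invokes self-equitability of \(\preccurlyeq_{ccx}\) (Remark \ref{remSchurOrder}\eqref{remSchurOrder4}), whereas you derive the identity \(P(Y\geq q_Y(v)\mid\XX)=P(Y\geq q_Y(v)\mid g(\XX))\) a.s.\ directly from conditional independence; this is a slightly more elementary version of the same step. For (ii) the paper argues via Theorem \ref{propcharS} (concordance order of the reduced SI vectors), while you argue via Theorem \ref{thecharSchur}; the mechanism you describe --- under SI, \(\eta^v_{Y|Z}\) is its own decreasing rearrangement and its integral is the copula, so the Schur criterion collapses to the pointwise copula inequality --- is exactly right. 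One caveat: Proposition \ref{rembivSchur} as stated assumes \(Y\eqd Y'\) and \(X\eqd X'\), which is not available here (you only have \(\overline{\Ran(F_Y)}=\overline{\Ran(F_{Y'})}\), and nothing relating \(g(\XX)\) to \(h(\XX')\)), so you cannot simply quote it; you must run the mechanism you sketched, which does go through under the marginal constraint.

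Part (iii) contains a genuine gap, and it sits precisely at the step you flagged. You correctly compute that reflecting the predictor sends \(C(s,w)\) to \(s-C(s,1-w)\) and therefore \emph{reverses} the pointwise comparison: \(C_{Y,Z}\leq_c C_{Y',Z'}\) becomes \(C_{Y,-Z}\geq_c C_{Y',-Z'}\). Feeding the reflected, now-SI pair into part (ii) therefore yields \((Y',\XX')\preccurlyeq_{ccx}(Y,\XX)\) --- the \emph{opposite} of the stated conclusion --- so the assertion that the claim follows ``with the orientation dictated by the reflection'' does not close the argument. The same point can be seen without reflecting: under \(Y\downarrow_{st}Z\) the function \(\eta^v_{Y|Z}\) is increasing, its decreasing rearrangement is \(t\mapsto\eta^v_{Y|Z}(1-t)\), and \(\int_0^x(\eta^v_{Y|Z})^*(t)\de t=F_Y(q_Y(v))-\int_0^{1-x}\eta^v_{Y|Z}(s)\de s\), so the Schur criterion of Theorem \ref{thecharSchur} is equivalent to \(C_{Y,Z}\geq_c C_{Y',Z'}\), not \(\leq_c\). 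This is consistent with Theorem \ref{propschurnormal}: for standard bivariate normals with \(\varrho<\varrho'<0\) one has \(C^{Ga}_{\varrho}\leq_c C^{Ga}_{\varrho'}\) but \((Y,Z)\succcurlyeq_{ccx}(Y',Z')\) since \(|\varrho|>|\varrho'|\). The paper's own one-line proof of (iii) (``replace \(g(\XX)\), \(h(\XX')\) by \(-g(\XX)\), \(-h(\XX')\)'') suppresses exactly this bookkeeping; carrying it out as you began shows that in the stochastically decreasing case the hypothesis \(C_{Y,g(\XX)}\leq_c C_{Y',h(\XX')}\) delivers \((Y,\XX)\succcurlyeq_{ccx}(Y',\XX')\). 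You should therefore either prove the statement with the concordance hypothesis (equivalently, the conclusion) reversed, or explicitly record that the stated orientation cannot be obtained by this route.
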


\begin{remark}\label{remverccx}
\begin{enumerate}[(a)]
\item 
The condition \(Y\uparrow_{st} g(\XX)\) is satisfied if there exists a representation \(Y = f(g(\XX),\varepsilon)\) a.s. for some componentwise increasing function \(f\) and an error \(\varepsilon\) that is independent of \(g(\XX)\); see e.g. \cite[Lemma 3.1]{Muller-2001}. Without loss of generality, \(\varepsilon\) may be uniform on \((0,1)\) or standard normal. The additive error model \eqref{eqadderrmod} admits such a representation. Note that the proof of Theorem \ref{theadderrmod} applies Theorem \ref{propmvSchur}\,\ref{propmvSchur1} but not the criteria on pointwise comparison of copulas in parts \ref{propmvSchur2} and \ref{propmvSchur3} because the distributions of \(g(\XX)\) and \(\varepsilon\) in the additive error model are not given and thus the copula \(C_{Y,g(\XX)}\) is not known.
\item The stochastic monotonicity assumptions in Theorem \ref{propmvSchur} facilitate the verification of the conditional convex order because, for bivariate SI random vectors from the same Fr\'{e}chet class, \(\preccurlyeq_{ccx}\) and \(\leq_c\) are equivalent; see Proposition \ref{rembivSchur}. For the latter concordance order, numerous comparison results are available for parametric families of distributions; see Example \ref{excop} below. Note that Proposition \ref{Prop.Bernoulli} on Bernoulli distributions does not require stochastic monotonicity assumptions as in Theorem \ref{propmvSchur}.
\end{enumerate}
\end{remark}

In the next example, we use Theorem \ref{propmvSchur} 
to verify for two specific families of multivariate normal distributions the \(\preccurlyeq_{ccx}\)-ordering criterion in \eqref{propschurnormal1}.

\begin{example}[Multivariate normal distribution]\label{exmultnormsch}~\\
Assume that \((Y,\XX)\sim N(\mathbf{0},\Sigma)\) follows a \((1+p)\)-dimensional normal distribution.
\begin{enumerate}[(a)]
\item Consider the case where the covariance matrix \(\Sigma = (\sigma_{ij})\) is equicorrelated with \(\sigma_{ij}=1\) for \(i=j\) and \(\sigma_{ij}=\varrho\) else for some \(\varrho\in [-1/p,1]\,.\)
    Standard calculations show that 
    \begin{align}\label{eqexmultnormsch1}
        (Y\mid \XX = \xx) \sim N\Big(\frac{\varrho}{1+(p-1)\varrho} \sum x_i\,, ~ 1-\frac{p\varrho^2}{1+(p-1)\varrho}\Big)\,.
    \end{align}
    The conditional distribution in \eqref{eqexmultnormsch1} depends only on \(g(\xx)=\sum x_i\,,\) so \(g\) is sufficient for \(Y|\XX\,.\) The random vector \((Y,\sum X_i)\) follows a bivariate normal distribution with zero mean and covariance matrix
    \begin{align*}
        \Sigma = \begin{pmatrix}
             1 & p\varrho\\ p\varrho & p(1+(p-1)\varrho)
        \end{pmatrix}\,.
    \end{align*}
    The copula of \((Y,\sum X_i)\) is the Gaussian copula with correlation \(\varrho\,.\) Since the Gaussian copula family is \(\leq_{c}\)-increasing in its parameter and since \(Y\uparrow_{st} \sum X_i\) for \(\varrho\geq 0\), we obtain from Theorem \ref{propmvSchur}\,\ref{propmvSchur2} that \((Y,\XX)\) is increasing in \(\varrho\) with respect to the conditional convex order.
Using radial symmetry of the bivariate normal distribution (i.e., \((-Y,-\sum X_i) \eqd (Y,\sum X_i)\)), using \(Y\downarrow_{st} \sum X_i\) for \(\varrho\leq 0\), and applying Theorem \ref{propmvSchur}\,\ref{propmvSchur3}, we obtain that \((Y,\XX)\) is increasing eventually in \(|\varrho|\) with respect to \(\preccurlyeq_{ccx}\). Note that, by \eqref{propschurnormal1}, this means that \(\Sigma_{Y,\XX}\Sigma_\XX^-\Sigma_{\XX,Y}\) is increasing in \(|\varrho|\).
    \item \label{exmultnormsch1} Consider the case where \(\Var(Y) = 1\), the components of \(\XX = (X_1,\ldots,X_p)\) are independent, and \(\Cor(Y,X_i) = \varrho\) for all \(i\). 
    Then the covariance matrix \(\Sigma\) is positive semi-definite if and only if \(\varrho\in [-p^{-1/2},p^{-1/2}]\).
    Again, the conditional distribution 
\begin{align}\label{eqexmultnormsch}
    (Y \mid \XX = \xx) \sim N\big(\varrho \sum x_i\,,~ 1-p\varrho^2\big)\,,
\end{align}
depends only on \(g(\xx)=\sum x_i\,.\) 
Straightforward calculations show that 
\begin{align*}
    \big(Y,\sum X_i\big)\sim N\big(0,\left(\begin{smallmatrix}
    1 & p\varrho \\ p\varrho & p\end{smallmatrix}\right)\big)\,.
\end{align*}
Similar as above, 
we obtain from Theorem \ref{propmvSchur}\,\ref{propmvSchur2} and \ref{propmvSchur3} that 
\((Y,\XX)\) as well as the Schur complements \(\Sigma_{Y,\XX}\Sigma_\XX^-\Sigma_{\XX,Y}\) are increasing in \(|\varrho|\) with respect to \(\preccurlyeq_{ccx}\).
\end{enumerate}
\end{example}


In the following example, we outline various well known SI copula families that are increasing in the concordance order. This yields families of models \(Y = f(g(\XX),\varepsilon)\) that fulfill the assumptions of Theorem \ref{propmvSchur}\,\ref{propmvSchur2} and thus are \(\preccurlyeq_{ccx}\)-comparable; see Remark \ref{remverccx}. Symmetry arguments yield similar results for stochastically decreasing copulas.

\begin{example}[Copula-based models]\label{excop}
    Let \(Y = f(g(\XX),\varepsilon)\) with copula \(C_1 = C_{Y,g(\XX)}\) and \(Y' = f(g(\XX'),\varepsilon')\) with copula \(C_2 = C_{Y',g(\XX')}\). Assume that \(\overline{\Ran(F_Y)} = \overline{\Ran(F_{Y'})}\).
    Then, we can verify \((Y,\XX)\preccurlyeq_{ccx} (Y',\XX')\) by Theorem \ref{propmvSchur}\,\ref{propmvSchur2} for Archimedean, extreme-value, and elliptical copulas \(C_1\) and \(C_2\) as follows.
    \begin{enumerate}[(a)]
        \item Assume that \(C_i\) is an \emph{Archimedean copula}, i.e., \(C_i\) admits a representation \(C_i(u,v) = \psi_i(\varphi_i(u),\varphi_i(v))\) for some decreasing convex function \(\varphi_i\colon [0,1]\to [0,\infty)\) with \(\varphi(1) = 0\), where \(\psi_i = \varphi_i^{[-1]}\) denotes the (pseudo-)inverse of \(\varphi_i\). If \(-\psi_i'\) is log-convex, then \(C_i\) is SI \cite[Theorem 2.8]{Mueller-2005}; further, if \(\varphi_1\circ \psi_2\) is subadditive, then \(C_1\leq_c C_2\) \cite[Theorem 4.4.2]{Nelsen-2006}. 
        Various Archimedean copula families that meet these conditions are given in \cite[Table 3]{Ansari-Rockel-2023}.
        \item Assume that \(C_i\) is an \emph{extreme-value copula}, i.e., \(C_i\) admits a representation \linebreak\(C_i(u,v) = (\exp(\log(uv)) A_i(\log(v)/(\log(u) + \log(v))\) where the \emph{Pickands dependence function} \(A_i\) is convex and satisfies the constraints \(\max\{t,1-t\}\leq A_i(t) \leq 1\) for all \(t\in [0,1]\). Extreme-value copulas are always SI; moreover, \(A_1(t)\geq A_2(t)\) for all \(t\in (0,1)\) implies \(C_1\leq_c C_2\); see \cite[Theorem 3.4]{Ansari-Rockel-2023}. Various well known extreme-value copula families that meet the conditions are given in \cite[Table 4]{Ansari-Rockel-2023}.
        \item Assume that \(C_i\) is an \emph{elliptical copula}, i.e., it is the copula (implicitly obtained by Sklar's theorem) of an elliptically distributed bivariate random vector \(\ZZ_i\eqd R A_i\UU\) for a non-negative random variable \(R\) that is independent of the bivariate random vector \(\UU\) which is uniformly distributed on the \(2\)-sphere. Here, \(A_i\) is a \(2\times 2\)-matrix such that \(\Sigma_i := A_iA_i^T = \begin{pmatrix}
            1 & \rho_i \\ \rho_i & 1
        \end{pmatrix}\) for \(\rho_i\in [-1,1]\). Sufficient conditions on \(R_i\) and \(\rho_i\) for \(C_i\) being SI are given in \cite[Proposition 1.2]{Abdous-2005}. Further, if \(\rho_1\leq \rho_2\) then \(C_1\leq_c C_2\). Elliptical SI copula families that meet these conditions are given in \cite[Table 5]{Ansari-Rockel-2023}.
    \end{enumerate}
\end{example}

\section{Dependence measures generated by \(\preccurlyeq_{ccx}\)}\label{secdepmea}

In this section, we use the conditional convex order to construct dependence measures based on conditional distribution functions, Wasserstein distances, and monotone rearrangements. These three classes correspond to the three equivalences in Theorem \ref{propcharS}.
The resulting dependence measures inherit information monotonicity and invariance properties from \(\preccurlyeq_{ccx}\). 
The zero-independence and max-functionality property require some regularity conditions based on convexity; see \cite{Ansari-LFT-2023,Figalli-2025}. We further provide simple conditions under which these dependence measures characterize conditional independence. 
As noted in Remark \ref{remnorm}\,\ref{remnorm2}, \(\preccurlyeq_{ccx}\) also underlies the kernel partial correlation \cite{deb2020b} for a linear kernel.

Throughout, we assume \(Y\) to be non-degenerate so that the proposed dependence measures attain at least two distinct values; if \(Y\) were degenerate, it would be both independent of \(\XX\) and perfectly dependent on \(\XX\), contradicting zero-independence and max-functionality.


\subsection{Dependence measures based on conditional distribution functions}

As a first class of dependence measures, we consider functionals of the form
\begin{align}
 \nonumber   \xi_\varphi(Y,\XX) :&= \alpha_\varphi^{-1} \int_{\R^{p}} \int_{\R} \varphi(F_{Y|\XX=\xx}(y)-F_{Y}(y)) \de P^Y(y) \de P^{\XX}(\xx)\quad \text{and}\\
 \label{eq:Gamma.delta.phi} 
    \Lambda_\varphi(Y,\XX) :&= \beta_\varphi^{-1} \int_{\R^{2p}}\int_{\R} \varphi(F_{Y|\XX=\xx_1}(y)-F_{Y|\XX=\xx_2}(y)) \de P^Y(y) \de (P^{\XX}\otimes P^{\XX})(\xx_1,\xx_2)
\end{align}
with normalizing constants 
\(\alpha_\varphi  := \int_{\R} \int_\R \varphi\left(\1_{\{y_1\leq y\}} - F_{Y}(y)\right) \de P^Y(y) \de P^Y (y_1)\) and \linebreak \(\beta_\varphi
   := \int_{\R^{2}}\int_\R  \varphi\left(\1_{\{y_1\leq y\}} - \1_{\{y_2\leq y\}}\right) \de P^Y(y) \de (P^Y\otimes P^Y)(y_1,y_2) \,.\)
   While \(\xi_\varphi\) compares conditional distribution functions with unconditional ones, \(\Lambda_\varphi\) measures the sensitivity of conditional distribution functions in the conditioning variable.
   Note that, for \(\varphi(x) = x^2\), the functionals \(\xi_\varphi\) and \(\Lambda_\varphi\) reduce to Chatterjee's rank correlation \(\xi\) defined in \eqref{Tuniv}.

By the following result, which is based on the Hardy–Littlewood–Polya theorem (see Lemma \ref{lemhlpt}), convexity conditions on \(\varphi\) imply that \(\xi_\varphi\) and \(\Lambda_\varphi\) are dependence measure that are \(\preccurlyeq_{ccx}\)-increasing and characterize (conditional) independence and perfect dependence.

\begin{theorem}[Dependence measures generated by convexity]\label{theconsccx}~\\
    Assume that \(\varphi: [-1,1] \rightarrow \R\) is convex and strictly convex at \(0\) with \(\varphi(0)=0\). 
    Then \((Y,\XX)\preccurlyeq_{ccx} (Y',\XX')\) implies 
\begin{align}\label{theconsccx0}
    \xi_\varphi(Y,\XX) \leq \xi_\varphi(Y',\XX')
    \qquad \textrm{ and } \qquad 
    \Lambda_\varphi(Y,\XX) \leq \Lambda_\varphi(Y',\XX')\,.
\end{align}
Further, the following statements hold true:
\begin{enumerate}[(i)]
    \item\label{theconsccx1} \(\xi_\varphi\) and \(\Lambda_\varphi\) attain values only in \([0,1]\).
    \item\label{theconsccx2} \(\xi_\varphi\) and \(\Lambda_\varphi\) have the zero-independence and max-functionality property.
    \item \(\xi_\varphi\) and \(\Lambda_\varphi\) satisfy information monotonicity, i.e. \(\xi_\varphi(Y,\XX) \leq \xi_\varphi(Y,(\XX,\ZZ))\) and \(\Lambda_\varphi(Y,\XX) \leq \Lambda_\varphi(Y,(\XX,\ZZ))\).
    \item\label{theconsccx3} If \(\varphi\) is strictly convex on \([-1,1]\), then \(\Lambda_\varphi(Y,\XX) = \Lambda_\varphi(Y,(\XX,\ZZ))\) if and only if \(\xi_\varphi(Y,\XX) = \xi_\varphi(Y,(\XX,\ZZ))\) if and only if \(Y\) and \(\ZZ\) are conditionally independent given \(\XX\).
\end{enumerate}
\end{theorem}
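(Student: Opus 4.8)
The plan is to rewrite both functionals in terms of the transformed conditional distribution functions $\eta_{Y|\XX}^v$ from \eqref{desetay} and then to invoke the Schur-order characterization of Theorem \ref{thecharSchur} together with the Hardy–Littlewood–Polya theorem (Lemma \ref{lemhlpt}). Using the quantile substitutions $\int_\R h\,\de P^Y = \int_0^1 h(q_Y(v))\,\de v$ and $\XX\eqd {\bf q}_\XX(\UU)$ with $\UU\sim U((0,1)^p)$, and writing $c_v:=F_Y(q_Y(v))$, I would first establish the representations
\begin{align*}
    \alpha_\varphi\,\xi_\varphi(Y,\XX) &= \int_0^1\!\!\int_{(0,1)^p}\varphi\big(\eta_{Y|\XX}^v(\uu)-c_v\big)\,\de\lambda^p(\uu)\,\de v,\\
    \beta_\varphi\,\Lambda_\varphi(Y,\XX) &= \int_0^1\!\!\int_{(0,1)^{2p}}\varphi\big(\eta_{Y|\XX}^v(\uu_1)-\eta_{Y|\XX}^v(\uu_2)\big)\,\de\lambda^{2p}(\uu_1,\uu_2)\,\de v,
\end{align*}
noting that $\int_{(0,1)^p}\eta_{Y|\XX}^v\,\de\lambda^p=c_v$. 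The marginal constraint built into $\preccurlyeq_{ccx}$ (Definition \ref{defccx}) guarantees that $c_v$, and hence the constants $\alpha_\varphi,\beta_\varphi$, agree for $(Y,\XX)$ and $(Y',\XX')$; a short convexity computation (evaluating the defining integrals at the two-point level) shows $\alpha_\varphi,\beta_\varphi>0$ whenever $Y$ is non-degenerate and $\varphi$ is strictly convex at $0$.

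For the monotonicity \eqref{theconsccx0}, the function $\psi_v:=\varphi(\,\cdot\,-c_v)$ is convex, so by Theorem \ref{thecharSchur} we have $\eta_{Y|\XX}^v\prec_S\eta_{Y'|\XX'}^v$ for a.a.\ $v$, and Lemma \ref{lemhlpt} applied to $\psi_v$ yields the inequality for $\xi_\varphi$ after integrating in $v$. The functional $\Lambda_\varphi$ is the cleanest point of the argument: since the values of $\eta_{Y|\XX}^v$ at two independent uniform points are equidistributed with $((\eta_{Y|\XX}^v)^*(S),(\eta_{Y|\XX}^v)^*(T))$ for independent $S,T\sim U(0,1)$, the inner double integral depends only on the decreasing rearrangement, i.e.\ on the law $\mu_v$ of $\eta_{Y|\XX}^v(\UU)$. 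I would then show that $\mu\mapsto \E\,\varphi(W-\widetilde W)$ (with $W,\widetilde W$ i.i.d.\ $\sim\mu$) is increasing in the convex order by a two-step conditioning argument: conditioning on $\widetilde W$ uses convexity of $x\mapsto\varphi(x-t)$ to replace $\mu$ by $\mu'$ in the first coordinate, and conditioning on the (new) first coordinate uses convexity of $t\mapsto\varphi(s-t)$ to replace it in the second. Since $\preccurlyeq_{ccx}$ is exactly $\mu_v\leq_{cx}\mu_v'$ for a.a.\ $v$ (via Theorem \ref{thecharSchur} and Lemma \ref{lemhlpt}), integrating over $v$ gives the $\Lambda_\varphi$ inequality.

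Parts \eqref{theconsccx1} and \eqref{theconsccx2} follow from the same representations. The lower bound $\xi_\varphi,\Lambda_\varphi\geq0$ is Jensen applied inside each $v$-integral (the integrands have mean $0$ in $\uu$ resp.\ $(\uu_1,\uu_2)$ and $\varphi(0)=0$); the upper bound $\leq1$ follows either by comparing $\varphi$ against its chord on $[-c_v,1-c_v]$, or, more conceptually, from monotonicity together with the comonotone maximal element $(Y,Y)\in\cR_A$, at which a direct computation gives value $1$ (this is exactly how $\alpha_\varphi,\beta_\varphi$ are normalised). For the zero-independence converse, $\xi_\varphi(Y,\XX)=0$ forces $\E\,\varphi(\eta_{Y|\XX}^v(\UU)-c_v)=\varphi(0)$ for a.a.\ $v$; intersecting the contact sets $\{\varphi=\ell\}$ over all supporting lines $\ell$ of $\varphi$ at $0$ and using that a mean-zero variable supported on one side of $0$ must vanish, strict convexity at $0$ gives $\eta_{Y|\XX}^v\equiv c_v$, i.e.\ independence. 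For max-functionality, $\xi_\varphi=1$ forces equality in the chord bound for a.a.\ $v$; since $\varphi$ is not affine on any interval containing $0$ in its interior, the contact set of $\varphi$ with its chord on $[-c_v,1-c_v]$ reduces to the endpoints, so $\eta_{Y|\XX}^v\in\{0,1\}$ a.e., which is perfect dependence. The $\Lambda_\varphi$ versions are analogous.

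For part \eqref{theconsccx3}, information monotonicity (Axiom \ref{axdepord4}, Theorem \ref{thefundpropS}) gives $(Y,\XX)\preccurlyeq_{ccx}(Y,(\XX,\ZZ))$, hence $\xi_\varphi(Y,\XX)\leq\xi_\varphi(Y,(\XX,\ZZ))$ and likewise for $\Lambda_\varphi$. If $Y$ and $\ZZ$ are conditionally independent given $\XX$, then $(Y,\XX)=_{ccx}(Y,(\XX,\ZZ))$ by Axiom \ref{axdepord5}, so both measures coincide. Conversely, equality of $\xi_\varphi$ (resp.\ $\Lambda_\varphi$) combined with $\eta_{Y|\XX}^v\prec_S\eta_{Y|(\XX,\ZZ)}^v$ forces, for a.a.\ $v$, equality in a strictly convex instance of the Hardy–Littlewood–Polya inequality (resp.\ of the convex-order inequality for the double integral); the equality case for strictly convex integrands---available from \cite{Chong-1974} for $\prec_S$, and from the standard equality characterisation of $\leq_{cx}$ for $\Lambda_\varphi$---yields $(\eta_{Y|\XX}^v)^*=(\eta_{Y|(\XX,\ZZ)}^v)^*$ for a.a.\ $v$, i.e.\ $(Y,\XX)=_{ccx}(Y,(\XX,\ZZ))$. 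Axiom \ref{axdepord5} then gives conditional independence. I expect the main obstacle to be exactly this step: turning the scalar equality of the integrated functionals into equimeasurability of the conditional distribution functions for almost every level $v$, which requires careful use of the strict-convexity equality conditions and the interchange of the $v$-integration with the pointwise rearrangement inequalities.
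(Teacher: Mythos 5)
Your proposal is correct, and for the monotonicity claim \eqref{theconsccx0} and parts \eqref{theconsccx1}--\eqref{theconsccx2} it follows essentially the paper's route: rewrite both functionals via \(\eta_{Y|\XX}^v\), use the marginal constraint to fix the normalizing constants, and apply Theorem \ref{thecharSchur} together with Lemma \ref{lemhlpt} to the convex map \(z\mapsto\varphi(z-c_v)\). Two points where you genuinely diverge are worth recording. First, for the \(\Lambda_\varphi\)-monotonicity the paper only says ``by a similar reasoning''; your two-step conditioning argument showing that \(\mu\mapsto\E\varphi(W-\widetilde W)\) for i.i.d.\ \(W,\widetilde W\sim\mu\) is \(\leq_{cx}\)-monotone actually supplies the missing detail, and your observation that the double integral depends only on the law of \(\eta_{Y|\XX}^v(\UU)\) is the right way to reduce it to the convex order. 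Second, for part \eqref{theconsccx3} the paper argues directly on the conditional distribution functions: it applies Jensen's inequality to \(\varphi\bigl(\int(g_y(\xx_1,\zz_1)-g_y(\xx_2,\zz_2))\,\de P^{\ZZ|\XX=\xx_1}\otimes P^{\ZZ|\XX=\xx_2}\bigr)\), uses strict convexity to force \(g_y(\xx_1,\ZZ^1_{\xx_1})-g_y(\xx_2,\ZZ^2_{\xx_2})\) to be a.s.\ constant, and integrates out to get \(F_{Y|\XX=\xx,\ZZ=\zz}=F_{Y|\XX=\xx}\) pointwise. You instead pass through the equality case of the convex order (a strictly convex test function with \(S\leq_{cx}T\) and \(\E\varphi(S)=\E\varphi(T)\) forces \(S\eqd T\)) to conclude \((Y,\XX)=_{ccx}(Y,(\XX,\ZZ))\) and then invoke Axiom \ref{axdepord5}. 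This is cleaner and non-circular (Axiom \ref{axdepord5} is established independently via \(\xi\)), but it outsources the final step to the \(\varphi(x)=x^2\) case through the axiom, whereas the paper's argument is self-contained for general strictly convex \(\varphi\). Your chord/contact-set argument for max-functionality is an equivalent repackaging of the paper's explicit convex-combination decomposition of \(f_y(\xx)-a\); both hinge on strict convexity at \(0\) preventing \(\varphi\) from being affine on an interval containing \(0\) in its interior.
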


\begin{remark} Suppose the assumptions of Theorem \ref{theconsccx}.
\begin{enumerate}[(a)]
    \item The normalizing constants \(\alpha_\varphi\) and \(\beta_\varphi\) in \eqref{eq:Gamma.delta.phi} are positive. Note that the marginal constraint implies that also \(Y'\) is non-degenerate.
    \item \(\xi_\varphi\) and \(\Lambda_\varphi\) are dependence measures that inherit all invariance properties from \(\preccurlyeq_{ccx}\), i.e. \(\xi_\varphi(Y,\XX) = \xi_\varphi(g(Y),\mathbf{h}(\XX))\) for all strictly increasing functions \(g\) and bijective functions \(\mathbf{h}\), and \(\xi_\varphi(Y,\XX) = \xi_\varphi(F_Y(Y),\mathbf{F}_\XX(\XX))\), similarly for \(\Lambda_\varphi\).
    \item Strict convexity of \(\varphi\) at \(\varphi(0) = 0\) is sufficient for the values \(0\) and \(1\) to characterize independence and perfect dependence, respectively.
    However, strict convexity at \(0\) is not sufficient for \(\xi_\varphi\) to characterize conditional independence.
    As a counterexample, consider \(\varphi\colon t \mapsto |t|\) which is convex and strictly convex at \(\varphi(0) = 0\).
    In this case, \(\xi_\varphi(Y,\XX) = \xi_\varphi(Y,(\XX,\ZZ))\) does not imply conditional independence of \(Y\) and \(\ZZ\) given \(\XX\); see \cite[Example 5.10]{fgwt2021}.
    
\end{enumerate}
\end{remark}

\begin{remark}\label{remaza}
\begin{enumerate}[(a)]
    \item The zero-independence and max-functionality property as well as information monotonicity for \(\xi_\varphi\) can be traced back to convexity properties for measures of statistical association as studied in \cite{Figalli-2025}.  In Theorem \ref{theconsccx}, we also provide a characterization of \emph{conditional} independence and show that \(\xi_\varphi\) is \(\preccurlyeq_{ccx}\)-increasing. 
    \item \label{remaza2} A variant of Chatterjee's rank correlation is the \emph{integrated \(R^2\)} in \cite{Azadkia-2025}, defined by 
    \begin{align*}
    \nu(Y,\XX)
	:= \int_{\mathbb{R}} \frac{\Var (P(Y \leq y \, | \, \XX))}{\Var (\mathds{1}_{\{Y \leq y\}})} \; \mathrm{d} \tilde{\mu}(y)
    \quad \textrm{for } 
    \tilde{\mu}(B) := \frac{\mu(B\cap \tilde{S})}{\mu(\tilde{S})}\,,
    \end{align*} 
    where \(\mu\) is the law of \(Y\) having support \(S\), and where \(\tilde{S} := S\setminus \{s_{\max}\}\) if \(S\) attains a maximum \(s_{\max}\), and \(\tilde{S}:= S\) otherwise.
    Note that, in comparison to \(\xi\) in \eqref{Tuniv}, the integral is outside the fraction. 
    Using the representation 
    \begin{align*}
    \nu(Y,\XX)
	   = \int_{q_Y^{-1}(\tilde{S})} 
        \frac{\Var (P(Y \leq q_Y(v) \, | \, \XX))}{\Var (\mathds{1}_{\{Y \leq q_Y(v)\}}) \, (1-P^Y(\{s_{\rm max}\}))} \; \mathrm{d} \lambda(v)
    \end{align*}
    and the version of \(\preccurlyeq_{ccx}\) for cumulative probabilities (see Remark \ref{remccx}\,\ref{remccx2}), it follows that also \(\nu\) is increasing in the conditional convex order.
\end{enumerate}
\end{remark}

\subsection{Optimal transport-based dependence measures}

As a second class of dependence measures generated by \(\preccurlyeq_{ccx}\), we consider optimal transport-based Wasserstein correlations as studied in \cite{wiesel2022}.
Therefore, let \(\nu\) and \(\nu'\) be distributions on \(\R\) and \(c\colon \R\times \R \to \R\) be a continuous cost function satisying \(c(y,y')\geq a(y)+b(y')\) for some \(a\in L^1(\nu)\) and \(b\in L^1(\nu')\). Then Kantorovich's mass transportation problem 
consists of finding an optimal coupling between \(\nu\) and \(\nu'\) that attains the minimal costs 
\begin{align}\label{eqopttransprob}
    \cW_c(\nu,\nu') := \inf_{\gamma\in \Pi(\nu,\nu')} \int c(y,y') \de \gamma(y,y'),
\end{align}
where \(\Pi(\nu,\nu')\) is the set of couplings between \(\nu\) and \(\nu'\); see \cite{Villani-2009}. Now, the \emph{Wasserstein correlation coefficient} in \cite{wiesel2022} adapted to our setting is defined by
\begin{align}\label{defWcc}
    \dW_c(Y,\XX) := \frac{\int_{\R^p} \cW_c(\pi_\xx,\nu) \de \mu(\xx)}{\int_\R \int_\R c(y,y') \de \nu(y) \de \nu(y')}.
\end{align}
Here \(\nu\), \(\mu\), and \(\pi_\xx\) denote the distribution of \(F_Y(Y)\), \(\XX\), and \(F_Y(Y)\mid \XX = \xx\), respectively. 
For \emph{submodular}\footnote{A function \(f\colon \R^2 \to \R\) is said to be submodular if \(f(\xx) + f(\yy) \geq f(\xx\wedge \yy)+f(\xx\vee \yy)\) for all \(\xx,\yy\in \R^2\), 
where \(\wedge\) and \(\vee\) denote the componentwise minimum and maximum, respectively.} 
cost functions \(c\), it is well known that the comonotone coupling \((F_{\pi_\xx}^{-1}(V),F_\nu^{-1}(V))\) attains the minimal cost \(W_c(\pi_\xx,\nu)\) in the numerator of \eqref{defWcc}, i.e.
\begin{align}\label{eqrepW_c}
    \cW_c(\pi_\xx,\nu) = \E c( F_{F_Y(Y)|\XX=\xx}^{-1}(V), F_{F_Y(Y)}^{-1}(V));
\end{align}
see \cite[Theorem 3.1.2]{Rachev-Rueschendorf-1998}. Important examples of submodular cost functions are \emph{convex costs} of the type \(c(y,y') = h(y'-y)\) for \(h\) convex. These functions include the standard costs \(c(y,y') = |y'-y|^p\) for \(p\geq 1\). 
For the following result, we use the characterization \ref{propcharS3} in Theorem \ref{propcharS} to show that, for convex costs, the Wasserstein correlation \(\dW_c(Y,\XX)\) is \(\preccurlyeq_{ccx}\)-increasing and satisfies all the desired properties of a dependence measure that quantifies the strength of functional dependence.

\begin{theorem}[Optimal transport-based dependence measures]\label{theOT}~\\
    Assume a cost function \(c(y,y') = h(y'-y)\) where \(h\colon \R\to \R\) is convex and strictly convex at \(0 = h(0)\). 
    Then 
    \begin{align}\label{eqtheOT}
        (Y,\XX)\preccurlyeq_{ccx} (Y',\XX') \quad \text{implies} \quad \dW_c(Y,\XX) \leq \dW_c(Y',\XX').
    \end{align}
    Further, the following statements hold true.
    \begin{enumerate}[(i)]
        \item \label{theOT1} \(\dW_c\) attains values only in \([0,1]\).
        \item \label{theOT2} \(\dW_c\) has the zero-independence and max-functionality property.
        \item \label{theOT3} \(\dW_c\) satisfies information monotonicity, i.e. \(\dW_c(Y,\XX) \leq \dW_c(Y,(\XX,\ZZ))\).
        \item \label{theOT4} If \(h\) is strictly convex, then \(\dW_c(Y,\XX) = \dW_c(Y,(\XX,\ZZ))\) if and only if \(Y\) and \(\ZZ\) are conditionally independent given \(\XX\).
    \end{enumerate}
\end{theorem}

\begin{remark}
    \begin{enumerate}[(a)]
        \item The function \(h\) in Theorem \ref{theOT} can attain negative values. Hence, also the conditional optimal cost \(\cW_c(\pi_\xx,\nu)\) may be negative. However, by convexity of \(h\) and \(h(0) = 0\), the average cost \(\int_{\R^p} \cW_c(\mu_\xx,\nu) \de \mu(\xx)\) is non-negative.
        \item As for the dependence measures \(\xi_\varphi\) and \(\Lambda_\varphi\) in Theorem \ref{theconsccx}, the Wasserstein correlation \(\dW_c\) inherits invariance properties from from the conditional convex order. 
        More precisely, \(\dW_c(Y,\XX) = \dW_c(g(Y),{\bf h}(\XX))\) for all strictly increasing functions \(g\) and bijective functions \(\bf{h}\), and \(\dW_c(Y,\XX) = \dW_c(F_Y(Y),{\bf F}_\XX(\XX))\).
        \item The Wasserstein correlation in \cite{wiesel2022} is defined with respect to a metric cost function. Apart from the metric \(d(y,y') = |y'-y|\), metrics \(d\) on \(\R\) are generally not submodular (since metrics are subadditive). Consequently, the representation of \(\cW_c\) in terms of conditional comonotonicity in \eqref{eqrepW_c} typically does not apply for metric costs \(c=d\). 
    \end{enumerate}
\end{remark}

\subsection{Rearrangement-based dependence measures}\label{secrearrdepmea}

For a third class of dependence measures generated by \(\preccurlyeq_{ccx}\), we use the characterization of \(\preccurlyeq_{ccx}\) via the bivariate concordance order in Theorem \ref{propcharS}\,\ref{propcharS2}. 


Therefore, we denote by \(\cR^\uparrow := \{(T,S)\mid T,S\sim U(0,1), T\uparrow_{st} S \}\) the class of bivariate SI random vectors with marginals that are uniform on \((0,1)\). 
Let \(\mu\) be a functional on \(\cR^\uparrow\) that is normalized and increasing in the concordance order in the sense that \(\mu(T,S) = 0\) if and only if \(T\) and \(S\) are independent, \(\mu(T,S) = 1\) if and only if \(T\) and \(S\) are comonotone, and \((T,S)\leq_c (T',S')\) implies \(\mu(T,S)\leq \mu(T',S')\).
We say that \(\mu\) is \emph{strictly increasing} on \(\cR^\uparrow\) if \(\leq_c\) is strict on \(\cR^\uparrow\), i.e. \((T,S)\leq_c (T',S')\) with \((T,S)\not\eqd (T',S')\) implies \(\mu(T,S) < \mu(T',S')\). 
For \((Y,\XX)\in \cR_{[0,1]}\) (recall \eqref{defR_A} for the definition of this class)
and for \(V,U\) i.i.d. uniform on \((0,1)\), we define the \emph{rearranged dependence measure} \(\sR_\mu\) by
\begin{align}\label{deffunccon}
    \sR_\mu(Y,\XX) := \mu(F_Y\circ q_{Y;\XX}^{\uparrow U}(V),U),
\end{align}
where \((F_Y\circ q_{Y;\XX}^{\uparrow U}(V),U)\) is a reduced random vector associated with \((Y,\XX)\). 
Since \((Y,\XX)\in \cR_{[0,1]}\), we have that \(F_Y\) is continuous. Hence, 
the reduced random vector \((F_Y\circ q_{Y;\XX}^{\uparrow U}(V),U)\) has marginals that are uniform on \((0,1)\). Since it is also SI, it is in \(\cR^\uparrow\), and \(\sR_\mu(Y,\XX)\) in \eqref{deffunccon} is well-defined.
Recall the identity \((Y,\XX) =_{ccx} (F_Y\circ q_{Y;\XX}^{\uparrow U}(V),U)\) in \eqref{eqredrv}.
Hence, \(\sR_\mu\) depends on the strength of functional dependence of \(Y\) on \(\XX\) in the sense of the conditional convex order. Equivalently, by Theorem \ref{propcharS}, \(\sR_\mu\) depends on the strength of \emph{positive} dependence of \((F_Y\circ q_{Y;\XX}^{\uparrow U}(V),U)\) in the sense of the concordance order.


\begin{theorem}[Dependence measures generated by monotone rearrangements]\label{cordepmeas}
    For the functional \(\sR_\mu\) in \eqref{deffunccon}, we have that 
    \begin{align}\label{cordepmeas1}
        (Y,\XX)\preccurlyeq_{ccx} (Y',\XX') \quad \text{implies} \quad \sR_\mu(Y,\XX) \leq \sR_\mu(Y',\XX').
    \end{align}
    Further, the following statements hold true.
    \begin{enumerate}[(i)]
    \item \label{cordepmeas2a} \(\sR_\mu\) takes values in \([0,1]\).
        \item \label{cordepmeas2} \(\sR_\mu\) has the zero-independence and max-functionality property. 
        \item \label{cordepmeas4} \(\sR_\mu\) satisfies information monotonicty, i.e. \(\sR_\mu(Y,\XX) \leq \sR_\mu(Y,(\XX,\ZZ))\).
        \item \label{cordepmeas5} If, additionally, \(\mu\) is strictly increasing on \(\cR^\uparrow\), then \(\sR_\mu(Y,\XX) = \sR_\mu(Y,(\XX,\ZZ))\) if and only if \(Y\) and \(\ZZ\) are conditionally independent given \(\XX\).
    \end{enumerate}
\end{theorem}

Considering the functional form of \(\sR_\mu\) in \eqref{deffunccon}, we can extend Wasserstein correlations to non-linear functionals by defining
\begin{align}\label{defS_mu}
    \mathsf{S}_\mu(Y,\XX) = 1 - \mu(F_Y \circ F_{Y|\XX}^{-1}(V), V)
\end{align}
for \(V\sim U(0,1)\) independent of \(\XX\).
Then the following result is a direct consequence of Theorem \ref{propcharS} and Theorem \ref{cordepmeas}.

\begin{corollary}[Dependence measures generated by conditional comonotonicity]~\\
\(\mathsf{S}_\mu\) in \eqref{defS_mu} is a dependence measure that has the same properties as \(\sR_\mu\) in Theorem \ref{cordepmeas}.
\end{corollary}

\pagebreak

\begin{remark}
\begin{enumerate}[(a)]
    \item Our approach on rearranged dependence measures in Theorem \ref{cordepmeas} is studied in \cite{strothmann2022} for the case of bivariate random vectors with continuous marginal distribution functions. Various examples of measures of concordance that are increasing on \(\cR^\uparrow\), such as Kendall's \(\tau\) and Gini's \(\gamma\), or even strictly increasing, such as Spearman's \(\rho\) or the Schweizer-Wolff measure, are given in \cite[Section 2.2]{strothmann2022}. 
    \item For the functional \(\mathsf{S}_\mu\) in \eqref{defS_mu}, note that the bivariate random vector \((F_Y \circ F_{Y|\XX}^{-1}(V), V)\) is \(\leq_c\)-decreasing in \(\preccurlyeq_{ccx}\) by equivalence of Theorem \ref{propcharS}\,\ref{propcharS1} and \ref{propcharS3}. If \(\mu\) is a linear functional of the form \(\E c(T,S)\), then \(\mathsf{S}_\mu\) reduces to the Wasserstein correlation in \eqref{defWcc}.
\end{enumerate}
\end{remark}

\section{Comparison of \(\preccurlyeq_{ccx}\) with related dependence orderings}\label{seccompS}

In this section, we compare the conditional convex order with well-established dependence orderings from the literature. 
While Theorem \ref{propcharS} reveals strong connections between the conditional convex order and the concordance order in terms of dimension-reduced bivariate SI random vectors, the two dependence orderings are fundamentally different; see Figure \ref{Fig2} and Table \ref{tabpropkpct}.
As we show in Proposition \ref{rembivSchur}, the two orderings coincide for SI random vectors.
We then discuss related global dependence orderings studied in \cite{siburg2013,Shaked-2012,Shaked-2013} and demonstrate that \(\preccurlyeq_{ccx}\) provides a more natural and powerful framework for ordering the strength of functional dependence.

\subsection{Comparison of \(\preccurlyeq_{ccx}\) with the concordance order}\label{sec_cccx}

The certainly most popular \emph{positive} dependence order is the concordance order. For bivariate random vectors \((Y,X)\) and \((Y',X'),\) it is defined by
\begin{align}\label{defconcordance}
    (Y,X)\leq_c (Y',X') \quad : \,\Longleftrightarrow \quad (Y,X)\leq_{lo} (Y',X') \quad \text{and} \quad (Y,X)\leq_{uo} (Y',X'),
\end{align}
where the \emph{lower orthant} and \emph{upper orthant} order on the right-hand side of \eqref{defconcordance} are defined by \(P(Y\leq y,X\leq x)\leq P(Y'\leq y,X'\leq x)\) and \(P(Y > y,X > x)\leq P(Y'>  y,X' > x)\) for all \(x,y\in \R\), respectively, see e.g. \cite[Definition 2.4]{Joe-2014}.
By definition, \(\leq_c\) is a pure dependence order since \((Y',X')\leq_c (Y',X')\) implies \(X\eqd X'\) and \(Y\eqd Y'\). Note that for equal marginal distributions, the bivariate concordance order is equivalent to the lower (and also to the upper) orthant order; see Proposition \ref{propconcordequiv}.

To relate the concordance order and the conditional convex order, let us consider for the moment bivariate random vectors \((V,U)\) and \((V',U')\) with \(U(0,1)\) marginals. Then we are in the setting of bivariate copulas.
Using the notation \(\eta_{V|U}^v(t) = P(V\leq v\mid U=t)\) in \eqref{desetay} and disintegration, we observe that the concordance order can be characterized by
\begin{align}\label{repconcordanceorder}
    (V,U)\leq_c (V',U') \quad \Longleftrightarrow \quad \int_0^u \eta_{V|U}^v(t) \de t \leq \int_0^u \eta_{V'|U'}^v(t) \de t \quad \text{for all } u,v\in [0,1].
\end{align}
The conditional convex order has a similar representation due to its characterization via the Schur order in Theorem \ref{thecharSchur}. The integrand now consists of the decreasing rearrangements of \(\eta_{V|U}^v\) and \(\eta_{V'|U'}^v\,,\) respectively, i.e., 
\begin{align*}
    (V,U)\preccurlyeq_{ccx} (V',U') \quad \Longleftrightarrow \quad \int_0^u (\eta_{V|U}^v)^*(t) \de t \leq \int_0^u (\eta_{V'|U'}^v)^*(t) \de t \quad \text{for all } u,v\in [0,1].
\end{align*}
The integrated decreasing rearrangements above coincide with the integrals in \eqref{repconcordanceorder} if and only if the functions \(\eta_{V|U}^v\) and \(\eta_{V'|U'}^v\) are decreasing for all \(v\,.\) The latter is equivalent to the property that \(V\) is SI in \(U\), and \(V'\) is SI in \(U'\). 

An extension of the above reasoning to arbitrary marginals yields the following result. It states that \(\preccurlyeq_{ccx}\) and \(\leq_c\) coincide for bivariate SI random vectors from the same Fr\'{e}chet class.

\begin{figure}
\begin{center}
    \begin{minipage}{0.4\textwidth}
        \begin{tikzpicture}
            \draw[ultra thick] (0,0) circle (2cm);
            
    \tikzset{>={Stealth[length=8pt, width=5pt]}}

		\draw[->, thick] (0,0) ellipse (1.5cm and 2cm); 
    \draw[->, thick] (0,0) ellipse (1cm and 2cm);  
    \draw[->, thick] (0,0) ellipse (0.5cm and 2cm);    
    \draw[-, thick] (0,-2) -- (0,2);
    
    \draw[->,thick] (-1.5,-0.01) -- (-1.5,0.01);
     \draw[->,thick] (-1.0,-0.01) -- (-1.0,0.01);
      \draw[->,thick] (-0.5,-0.01) -- (-0.5,0.01);
       \draw[->,thick] (0,-0.01) -- (0,0.01);
         \draw[->,thick] (0.5,-0.01) -- (0.5,0.01);
         \draw[->,thick] (1.0,-0.01) -- (1.0,0.01);
          \draw[->,thick] (1.5,-0.01) -- (1.5,0.01);

            \filldraw (90:2cm) circle (2pt);
            \node[above] at (90:2cm) {\(M\)};
            
            \filldraw (270:2cm) circle (2pt);
            \node[below] at (270:2cm) {\(W\)};
            
            \filldraw (0,0) circle (2pt);
            \node[below right] at (0,0) {$\Pi$};
        \end{tikzpicture}
    \end{minipage}
    \hspace{0.1cm} 
    \begin{minipage}{0.4\textwidth}
        \begin{tikzpicture}
         \tikzset{>={Stealth[length=8pt, width=5pt]}}
            \draw[ultra thick] (0,0) circle (2cm);
            
            \foreach \angle in {0, 30,60, 90,120,150,180,210,240,270,300,330} {
                \draw[->, thick] (0,0) -- (\angle:2cm);
            }

            \filldraw (90:2cm) circle (2pt);
            \node[above] at (90:2cm) {\(M\)};
            
            \filldraw (270:2cm) circle (2pt);
            \node[below] at (270:2cm) {\(W\)};
            
            \filldraw (0,0) circle (2pt);
            \node[below right] at (0,0) {$\Pi$};
        \end{tikzpicture}
    \end{minipage}
\end{center}
\caption{Visualization of the concordance order (left panel) and the conditional convex order (right panel) for bivariate copulas. Here, \(W\) and \(M\) denote the lower and upper Fr\'{e}chet copula and \(\Pi\) the independence copula. While \(W\) and \(M\) are the uniquely determined minimal and maximal elements in concordance order, the conditional convex order admits the independence copula as a global minimal element and all perfectly dependent copulas as global maximal elements.}
    \label{Fig2}
\end{figure}
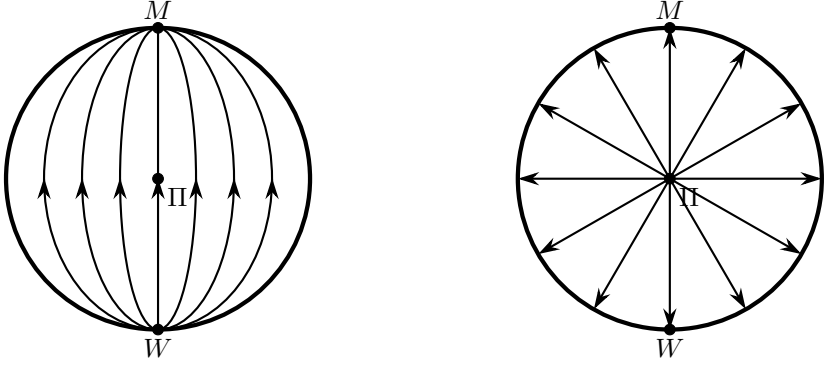

\begin{proposition}[Characterization of \(\preccurlyeq_{ccx}\) for bivariate SI random vectors]\label{rembivSchur}~\\
    Let \((Y,X)\) and \((Y',X')\) be bivariate SI random vectors with \(Y\eqd Y'\) and \(X\eqd X'\,.\) Then the conditional convex order and the concordance order are equivalent, i.e., 
    \begin{align}\label{eqrembivSchur}
        (Y,X)\preccurlyeq_{ccx} (Y',X') \quad \Longleftrightarrow \quad (Y,X)\leq_{c} (Y',X')\,.
    \end{align}
    In particular, \((Y,X) =_{ccx} (Y',X')\) is equivalent to \((Y,X) \eqd (Y',X')\,.\)
\end{proposition}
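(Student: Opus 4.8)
The plan is to combine the Schur-order characterization of $\preccurlyeq_{ccx}$ in Theorem \ref{thecharSchur} with the lower-orthant representation of the concordance order, exploiting that the stochastic-increasingness hypothesis collapses every decreasing rearrangement to the function itself. First I would record that for an SI vector $(Y,X)$ the function $\eta_{Y|X}^v$ in \eqref{desetay} is decreasing: since $Y$ is SI in $X$, the map $x\mapsto P(Y\leq q_Y(v)\mid X=x)$ is decreasing, and post-composing with the increasing quantile transform $q_X$ keeps it decreasing in $u$. Hence $(\eta_{Y|X}^v)^*=\eta_{Y|X}^v$ $\lambda$-a.e., and likewise for $(Y',X')$. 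By Theorem \ref{thecharSchur}, the relation $(Y,X)\preccurlyeq_{ccx}(Y',X')$ therefore reduces to
\[
\int_0^u \eta_{Y|X}^v(t)\,\de t \;\leq\; \int_0^u \eta_{Y'|X'}^v(t)\,\de t \qquad \text{for all } u\in(0,1) \text{ and } \lambda\text{-a.a. } v\in(0,1),
\]
the equality of the full integrals at $u=1$ being automatic from $Y\eqd Y'$, since both sides equal $F_Y(q_Y(v))$.

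Second, I would identify these integrals with a joint distribution function. Passing to a coupling in which $X=q_X(U)$ for $U\sim U(0,1)$ and $Y$ is conditionally independent of $U$ given $X$ (e.g.\ via the distributional transform with an auxiliary uniform independent of $(Y,X)$), conditional independence gives $\E[\mathds{1}_{\{Y\leq q_Y(v)\}}\mid U]=\eta_{Y|X}^v(U)$ a.s., whence $\int_0^u \eta_{Y|X}^v(t)\,\de t = P(Y\leq q_Y(v),\,U\leq u)=:D(q_Y(v),u)$. In this coupling $(Y,U)$ is again SI, because $P(Y>y\mid U=u)=P(Y>y\mid X=q_X(u))$ is increasing in $u$; and the Galois relation $q_X(u)\leq x \Leftrightarrow u\leq F_X(x)$ yields $P(Y\leq y, X\leq x)=D(y,F_X(x))$. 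Since for equal margins the concordance order coincides with the lower-orthant order (Proposition \ref{propconcordequiv}), $(Y,X)\leq_{c}(Y',X')$ is precisely the assertion that $D(y,u)\leq D'(y,u)$ for all $y\in\R$ and all $u\in\Ran(F_X)=\Ran(F_{X'})$.

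Third, I would reconcile the two parameter ranges. Moving from the grid $\Ran(F_X)$ to all $u\in(0,1)$ is harmless: on each interval on which $q_X$ is constant (an atom of $X$) the coupling makes $u\mapsto D(y,u)$ affine, and an affine function dominated at both endpoints of an interval is dominated throughout, so the inequality on $\overline{\Ran(F_X)}$ (using that $D(y,\cdot)$ is Lipschitz in $u$) propagates to all $u$. The passage from ``$\lambda$-a.a.\ $v$ at the argument $q_Y(v)$'' on the $\preccurlyeq_{ccx}$ side to ``all $y$'' on the $\leq_c$ side is handled by the monotonicity and right-continuity of $D$ in its first argument together with $q_Y=q_{Y'}$, which lets one discard the exceptional $v$-null set by a limiting argument. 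Matching the two characterizations yields \eqref{eqrembivSchur}. Finally, the ``in particular'' statement follows from the antisymmetry of the concordance order for fixed margins: $(Y,X)=_{ccx}(Y',X')$ unfolds into $(Y,X)\leq_{c}(Y',X')$ and $(Y',X')\leq_{c}(Y,X)$, forcing the two lower-orthant functions to coincide and hence $(Y,X)\eqd(Y',X')$; the converse is immediate from law-invariance (Axiom \ref{axdepord0}).

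I expect the main obstacle to lie in the second and third steps, namely making the identity between the integrated conditional survival function and the joint distribution function fully rigorous when $X$ (equivalently $Y$) has atoms, and carefully matching the ``for all $u$, $\lambda$-a.a.\ $v$'' quantifiers against the ``for all $x,y$'' quantifiers of the lower-orthant order, since naive evaluation at quantile points only probes $\Ran(F_X)$ and $\Ran(F_Y)$ and one must argue that this suffices.
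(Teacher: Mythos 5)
Your proposal is correct, and it reaches the result by a route that is more elementary and self-contained than the paper's, though the two are close in substance. The paper's proof invokes Theorem \ref{propcharS} as a black box: since \(Y\uparrow_{st}X\) forces \(P(Y\leq y\mid X=q_X(u))=F_u(y)\), the rearranged quantile transform collapses to \(q_{Y|X}^{\uparrow U}(V)=F_{Y|X=q_X(U)}^{-1}(V)\), so the reduced random vector is just \((Y,X)\) up to the increasing marginal transformation \(q_X\) of the second coordinate, and invariance of \(\leq_c\) under increasing transformations finishes both directions. You instead start from Theorem \ref{thecharSchur}, observe that SI makes the decreasing rearrangement of \(\eta_{Y|X}^v\) trivial, identify \(\int_0^u\eta_{Y|X}^v(t)\,\de t\) with \(P(Y\leq q_Y(v),U\leq u)\) via a conditional-independence coupling, and then match this against the lower-orthant order by hand. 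In effect you re-derive the SI special case of Theorem \ref{propcharS} from scratch; the quantifier-matching you flag as the main obstacle is exactly what the paper has factored out into Lemma \ref{lemconcordlfs} and the limiting arguments of Proposition \ref{propversccx}. Your sketched resolutions are sound: the Lipschitz continuity of \(u\mapsto D(y,u)\) extends the inequality from \(\Ran(F_X)\) to its closure, affineness of \(D(y,\cdot)\) and \(D'(y,\cdot)\) on the common gaps (atoms of \(X\eqd X'\)) fills the complement, and for the \(v\)-null set one should note explicitly that an atom of \(Y\) at \(q_Y(v_0)\) produces an interval of positive length on which \(q_Y\) is constant, so the exceptional set cannot cover it and one may evaluate at a non-exceptional \(v\) with the same quantile; the atomless case is handled by left-continuity of \(q_Y\) and continuity of measures. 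What your approach buys is independence from the dimension-reduction machinery of Section \ref{secdimredprin}; what it costs is redoing, in the bivariate SI case, the measure-theoretic bookkeeping that the paper's general lemmas already supply.
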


\begin{table}[t]
\centering
\scalebox{0.95}{
\begin{tabular}{|l||c|c|}
\hline
\textbf{Dependence orders} & \textbf{\((Y,X)\leq_c (Y',X')\)} & \textbf{\((Y,\XX)\preccurlyeq_{ccx} (Y',\XX')\)}  \\ \hline \hline
   Domain  &  Fr\'echet class &  \(\cR_A=\{(Y,\XX)\mid \overline{\Ran(F_Y)} = A\}\)  \\ \hline
   Transitivity & \cmark &  \cmark \\
   Reflexivity & \cmark & \cmark  \\
   Antisymmetry & \cmark & \xmark   \\
   Symmetry & \cmark & \xmark   \\\hline
   Lower bound & 
   Countermonotonicity & Independence\\
   Upper bound & Comonotonicity  & Perfect dependence \\ \hline
   Invariance under increasing transformations & \((a(Y),X)\leq_c (a(Y'),X')\) & \((a(Y),\XX)\preccurlyeq_{ccx} (a(Y'),\XX')\) \\
   Invariance under decreasing transformations & \((b(Y),X)\geq_c (b(Y'),X')\) & \((b(Y),\XX)\preccurlyeq_{ccx} (b(Y'),\XX')\) \\
   Invariance under bijective transformations & \xmark & \((Y,{\mathbf{h}}(\XX))\preccurlyeq_{ccx} (Y',\mathbf{h}(\XX'))\) 
   \\ \hline
\end{tabular}}
\vspace{5mm}
\caption{Basic properties of the bivariate concordance order and the conditional convex order, where \(A\) is a closed subset of \([0,1]\), \(a\) and \(b\) are strictly increasing/decreasing functions, and \(\mathbf{h}\) is a bijective function}
\label{tabpropkpct}
\end{table}

\subsection{Comparison of \(\preccurlyeq_{ccx}\) with the dilation order and dispersive order}\label{subsecdispdil}

Recall that the conditional convex order is defined through the variability of conditional survival functions in the \emph{conditioning} variable.
As discussed at the beginning of Section \ref{sec12}, another approach of constructing a dependence order for \(\xi\) is to study the variability of the conditional survival function in the \emph{conditioned} variable, i.e., to study the variability of the random variable
\begin{align}\label{eqvarY}
    P(Y\geq q_Y(V) \mid \XX = {\bf q}_\XX(\uu)) 
\end{align}
for Lebesgue-almost all \(\uu\in [0,1]^p.\) Here, \(V\sim U(0,1)\) is independent from \(\XX\). 
Similar as for \(\preccurlyeq_{ccx}\), maximal variability in \eqref{eqvarY} (in convex order) corresponds to perfect dependence of \(Y\) on \(\XX\); see \cite[Proposition 3\,(ii)]{siburg2013}. However, a drawback of this approach is that the random variable in \eqref{eqvarY} is not constant for independent \(\XX\) and \(Y\). Hence, independence of \(\XX\) and \(Y\) cannot be identified with minimal elements in convex order. 
Moreover, a comparison of the random variables in \eqref{eqvarY} with respect to the convex order requires equal means, which is typically not the case for conditional distributions.
To overcome the latter shortcoming, the idea in \cite{siburg2013} is to center the conditional distributions by subtracting conditional means. This leads to the concept of \emph{dilation order} defined by
    \begin{align}
        \label{defdil}
        S & \leq_{dil} T \quad \text{if} \quad S - \E S \leq_{cx} T - \E T\,.
    \end{align}
    A stronger order, that implies the dilation order, is the \emph{dispersive order}, also discussed in \cite{siburg2013}.
        The following result can be extended to arbitrary random vectors \(\XX\) and \(\XX'\).

    \begin{proposition}[Dilation order criterion {\cite[Theorem 2]{siburg2013}}]\label{propdisp}~\\
        Let \((Y,X)\) and \((Y',X')\) be bivariate random vectors with continuous marginal distribution functions.
        Then 
        \begin{align}\label{eqpropdisp}
        (F_Y(Y)|X = q_X(u)) \leq_{dil} (F_{Y'}(Y')|X' = q_{X'}(u)) \quad \text{for } \lambda\text{-almost all } u\in (0,1) 
    \end{align}
    implies \(\xi(Y,X) \leq \xi(Y',X')\,.\)
    \end{proposition}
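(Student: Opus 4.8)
The plan is to work entirely in the copula setting and to express \(\xi\) through the ``first approach'' random variables appearing in \eqref{eqvarY}, for which the convex test function \(t\mapsto t^2\) is the natural one. Since \(F_Y\) and \(F_{Y'}\) are continuous, \(U:=F_X(X),\,V:=F_Y(Y)\) and \(U':=F_{X'}(X'),\,V':=F_{Y'}(Y')\) are uniform on \((0,1)\); by invariance of \(\xi\) under strictly increasing marginal transformations one has \(\xi(Y,X)=\xi(V,U)\) and \(\xi(Y',X')=\xi(V',U')\), so it suffices to compare the two underlying copulas. Fix \(W\sim U(0,1)\) independent of everything and set \(R_u:=P(Y\geq q_Y(W)\mid X=q_X(u))\) and \(R'_u:=P(Y'\geq q_{Y'}(W)\mid X'=q_{X'}(u))\), the variables whose variability enters \eqref{eqvarY}. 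Note first that the hypothesis \eqref{eqpropdisp} forces the marginal constraint \(\overline{\Ran(F_Y)}=\overline{\Ran(F_{Y'})}\), because \(\leq_{dil}\) preserves means; hence the normalizing constants in the representation below are common to both vectors.

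Next I would record the representation of \(\xi\) adapted to these variables. Starting from \eqref{repxi} with \(p=1\) and integrating in \(y\) for each fixed \(x\) by Fubini (using the disintegration theorem exactly as in the derivation of \eqref{repxi}), the substitution \(q_Y(W)\sim P^Y\) yields
\[
  \xi(Y,X)=\alpha\int_0^1 \E\big[R_u^2\big]\,\mathrm{d} u-\beta,\qquad
  \xi(Y',X')=\alpha\int_0^1 \E\big[R_u'^2\big]\,\mathrm{d} u-\beta,
\]
with the same positive \(\alpha,\beta\) depending only on \(\overline{\Ran(F_Y)}\). Since \(t\mapsto t^2\) is convex and \(\alpha>0\), this exhibits \(\xi\) as a monotone functional, through the squared integrand, of the conditional laws of \(R_u\) in the convex order. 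This is the point at which the Hardy--Littlewood--Polya theorem (Lemma \ref{lemhlpt}), equivalently the defining property of \(\leq_{cx}\), enters: the goal is reduced to establishing \(\int_0^1 \E[R_u^2]\,\mathrm{d} u\leq \int_0^1 \E[R_u'^2]\,\mathrm{d} u\).

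The remaining step is to transfer the dilation hypothesis on the conditional distributions \((F_Y(Y)\mid X=q_X(u))\) to the ordering of the second moments above, for \(\lambda\)-almost all \(u\), and then integrate. Because \(\leq_{dil}\) is precisely the convex order applied to the \emph{centered} conditional laws (see \eqref{defdil}), the comparison is first carried out for the centered variables and then reassembled. Concretely, I would write \(\E[R_u^2]=\Var(R_u)+(\E R_u)^2\), use the dilation hypothesis to compare the dispersion terms \(\Var(R_u)\) via the centered convex order, and then account for the location terms \(\E R_u=\E[V\mid U=u]\). Inserting the resulting integrated inequality into the two displayed representations and using \(\alpha>0\) gives \(\xi(Y,X)\leq\xi(Y',X')\). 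Measurability and integrability of \(u\mapsto\E[R_u^2]\) are inherited from the disintegration used to obtain the representation.

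The main obstacle is the bookkeeping of the conditional means. A direct comparison in \(\leq_{cx}\) would require \(\E R_u=\E R'_u\), which the conditional laws do not satisfy in general; this is exactly why \eqref{eqpropdisp} is stated in the dilation order rather than the plain convex order, and it is the reason the ``first approach'' needs centering at all. The delicate part of the argument is therefore not the convexity of \(t\mapsto t^2\) but the proof that, after centering, the residual location contributions \(\E R_u\) recombine across \(u\) consistently with the dispersion contributions — using the marginal constraint and \(\int_0^1\E R_u\,\mathrm{d} u=\int_0^1\E R'_u\,\mathrm{d} u=\tfrac12\) — so as to produce the stated direction of the inequality for \(\int_0^1\E[R_u^2]\,\mathrm{d} u\). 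Pinning down this interaction between centering and the conditional means, rather than the Hardy--Littlewood--Polya step itself, is where the real work lies, and it is also the step most sensitive to the precise orientation of the dilation order in \eqref{eqpropdisp}.
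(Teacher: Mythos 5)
First, a point of reference: the paper does not prove this proposition at all — it is imported verbatim as \cite[Theorem 2]{siburg2013} — so your attempt has to stand entirely on its own. Your preparation is fine: the copula reduction and the representation \(\xi(Y,X)=6\int_0^1\E[R_u^2]\de u-2\) (with \(\alpha=6\), \(\beta=2\) under continuous marginals) are correct. But the step you defer as ``where the real work lies'' is exactly where the argument fails, and not merely for bookkeeping reasons. The hypothesis \eqref{eqpropdisp} is a dilation ordering of the conditional \emph{law} of \(F_Y(Y)\) given \(X=q_X(u)\) (this is also how the paper reads it in Example \ref{exdisp}, via \(F^{-1}_{F_Y(Y)|X=q_X(u)}(W)\)), whereas \(\Var(R_u)\) is the variance of the conditional survival function evaluated at an independent uniform \(W\) — a different object; the hypothesis does not order \(\Var(R_u)\) pointwise, so ``use the dilation hypothesis to compare the dispersion terms \(\Var(R_u)\)'' is a non sequitur. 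The correct transfer mechanism is elementary but different: with \(\tilde V_1,\tilde V_2\) i.i.d.\ copies of the conditional law of \(V=F_Y(Y)\) given \(U=F_X(X)=u\),
\begin{align*}
\E[R_u^2]=\int_0^1 P(V\geq v\mid U=u)^2\de v=\E\big[\min(\tilde V_1,\tilde V_2)\big]=\E[V\mid U=u]-\tfrac12\,G(u),
\end{align*}
where \(G(u):=\E|\tilde V_1-\tilde V_2|\) is the Gini mean difference of the conditional law, a translation-invariant functional that is increasing in \(\leq_{cx}\) (condition on one copy at a time and use convexity of \(x\mapsto|x-c|\)), hence increasing in \(\leq_{dil}\).

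Carrying this out, however, shows that your claimed reduction is unobtainable, because the statement as printed is misoriented. From \eqref{eqpropdisp} you get \(G(u)\leq G'(u)\) for \(\lambda\)-almost all \(u\), and since \(\int_0^1\E[V\mid U=u]\de u=\E[F_Y(Y)]=\tfrac12\) for \emph{both} models (this is the only place the location terms enter, and they cancel exactly), you obtain \(\int_0^1\E[R_u^2]\de u\geq\int_0^1\E[R_u'^2]\de u\), i.e.\ \(\xi(Y,X)\geq\xi(Y',X')\) — the reverse of the target inequality. A sanity check confirms this: take \((Y,X)\) comonotone and \((Y',X')\) independent, all marginals uniform on \((0,1)\); the conditional laws are \(\delta_u\) and \(U(0,1)\), so \eqref{eqpropdisp} holds (a point mass is \(\leq_{dil}\) anything, by Jensen), yet \(\xi(Y,X)=1>0=\xi(Y',X')\). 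So the hypothesis in \eqref{eqpropdisp} must be read with \(\geq_{dil}\) to yield the stated conclusion — less dispersed conditional laws mean stronger regression dependence, consistent with \cite{siburg2013} — and a correct proof runs precisely through the Gini identity above with that orientation; your honest admission that the interplay of centering and conditional means was unresolved is exactly the point where your intended direction breaks. Two smaller slips: \(\leq_{dil}\) does \emph{not} preserve means (centering is its whole purpose; besides, continuity of \(F_Y,F_{Y'}\) makes the marginal constraint trivially \([0,1]=[0,1]\)), and Lemma \ref{lemhlpt} (Hardy--Littlewood--Polya/Schur order) belongs to the conditioning-variable approach of the conditional convex order and plays no role here — only the definition of \(\leq_{cx}\) is needed.
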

    To verify the dilation order in \eqref{eqpropdisp}, one needs to substract the conditional expectations \(\E[F_Y(Y)|X = q_X(u)]\) and \(\E[F_{Y'}(Y')|X' = q_{X'}(u)],\) respectively.
    However, as we discuss in the sequel, the centered conditional distributions are even in the simplest models not comparable in convex order, so that the dilation order in \eqref{eqpropdisp} cannot be verified; see Figure \ref{figDilord}.
To this end, we use the following necessary condition for convex order; see \cite[Equation (3.A.12)]{Shaked-2007}. We denote by \(\supp(S)\) the support of a random variable \(S\). Similarly, \(\supp(S\mid T=t)\) is the support of the conditional distribution of \(S\) given \(T=t\).

\begin{lemma}
    Let \(S\) and \(T\) be random variables whose supports are intervals. Then \(S\leq_{cx} T\) implies \(\supp(S) \subseteq \supp(T)\,.\)
\end{lemma}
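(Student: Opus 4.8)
The plan is to probe the convex order with the one-sided convex ``hockey-stick'' functions $\varphi_c^+(x) := (x-c)^+$ and $\varphi_c^-(x) := (c-x)^+$, which are the standard test functions for locating the tails of a distribution. Write $\supp(T) = I_T$ and $\supp(S) = I_S$, with endpoints $a_T := \inf I_T$, $b_T := \sup I_T$ and $a_S := \inf I_S$, $b_S := \sup I_S$. Since $\supp(T)$ is by hypothesis an interval, it suffices to establish the two endpoint inequalities $a_T \le a_S$ and $b_S \le b_T$; together with the trivial inclusion $I_S \subseteq [a_S,b_S]$ these give $I_S \subseteq [a_T,b_T]\cap\R = I_T$.

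I would prove $b_S \le b_T$ by contradiction. Assume $b_S > b_T$. By the definition of the support as a supremum, $P(S > c) > 0$ for every $c < b_S$, so in particular $P(S > b_T) > 0$. Applying the convex order to $\varphi_{b_T}^+$ yields
\[
0 < \E (S - b_T)^+ = \E \varphi_{b_T}^+(S) \le \E \varphi_{b_T}^+(T) = \E (T - b_T)^+ = 0,
\]
where the right-hand expectation vanishes because $T \le b_T$ almost surely (as $b_T = \sup\supp(T)$), and the left-hand one is strictly positive because the integrand is positive on the event $\{S > b_T\}$ of positive probability. This contradiction forces $b_S \le b_T$. The inequality $a_T \le a_S$ follows by the same argument applied to $\varphi_{a_T}^-$.

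The argument is short because all the work is done by the choice of test functions, and under the paper's convention that $\leq_{cx}$ is defined for bounded random variables every expectation above is automatically finite, so there is no integrability obstacle to worry about. The only point that genuinely requires the hypothesis is the final implication: the interval assumption on $\supp(T)$ is exactly what upgrades the two endpoint inequalities into the set inclusion, since a support with gaps could satisfy $a_T \le a_S$ and $b_S \le b_T$ while still failing $\supp(S)\subseteq\supp(T)$. I would therefore flag this as the conceptual heart of the statement, with the tail estimates themselves being entirely routine.
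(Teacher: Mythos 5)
Your proof is correct. The paper does not prove this lemma itself but cites \cite[Equation (3.A.12)]{Shaked-2007}, which is exactly the stop-loss inequality \(\E(S-c)^+ \leq \E(T-c)^+\) obtained from the hockey-stick test functions you use, so your argument is the standard route behind the cited source (and you rightly note that only the interval hypothesis on \(\supp(T)\) is actually needed, since \(\supp(S)\subseteq[a_S,b_S]\) holds for any support).
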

As a consequence of the above lemma, \(S\) and \(T\) are not comparable in convex order if 
\begin{align}\label{eqsuppcondncx}
    \inf(\supp(S)) < \inf(\supp(T)) \quad \text{and} \quad \sup(\supp(S)) < \sup(\supp(T))\,.
\end{align}
Using \eqref{eqsuppcondncx}, we obtain the following result, which limits the use of the dilation order criterion for conditional distributions in Proposition \ref{propdisp}.

\begin{proposition}\label{propnodil}
    Let \((V,U)\) and \((V',U')\) be bivariate random vectors. Assume that \(\E[V\mid U=u] \ne \E[V'\mid U'=u]\) and \(\supp(V\mid U=u) = \supp(V'\mid U'=u)\,.\) Then,  neither \((V\mid U =u) \leq_{dil} (V'\mid U'=u)\) nor \((V\mid U =u) \geq_{dil} (V'\mid U'=u)\,.\)
\end{proposition}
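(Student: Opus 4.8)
The plan is to reduce the statement directly to the support obstruction \eqref{eqsuppcondncx} by centering. Fix $u$ and abbreviate $S := (V\mid U=u)$ and $T := (V'\mid U'=u)$, writing $I := \supp(S) = \supp(T)$ for their common support. By the definition of the dilation order in \eqref{defdil}, establishing the claim amounts to showing that the centered variables $\tilde S := S - \E S$ and $\tilde T := T - \E T$ are incomparable in the convex order, i.e. that neither $\tilde S \leq_{cx} \tilde T$ nor $\tilde T \leq_{cx} \tilde S$ holds.

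First I would record the supports of the centered variables. Since centering is a deterministic translation, $\supp(\tilde S) = I - \E S$ and $\supp(\tilde T) = I - \E T$, and in particular
\[
 \inf(\supp(\tilde S)) = \inf I - \E S, \qquad \sup(\supp(\tilde S)) = \sup I - \E S,
\]
and analogously for $\tilde T$ with $\E T$ in place of $\E S$. Note also that $\E \tilde S = \E \tilde T = 0$, so the ``equal means'' necessary condition for convex order is automatically met; hence the only possible obstruction to comparability is the location of the supports.

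Next I would split into the two symmetric cases $\E S < \E T$ and $\E S > \E T$, one of which holds because $\E S \neq \E T$ by hypothesis. In the case $\E S < \E T$, subtracting the larger mean $\E T$ pushes both endpoints of $\supp(\tilde T)$ strictly below those of $\supp(\tilde S)$, that is, $\inf(\supp(\tilde T)) < \inf(\supp(\tilde S))$ and $\sup(\supp(\tilde T)) < \sup(\supp(\tilde S))$. This is exactly condition \eqref{eqsuppcondncx} with $\tilde T$ and $\tilde S$ playing the roles of $S$ and $T$ there, so the consequence drawn from the preceding lemma yields that $\tilde S$ and $\tilde T$ are incomparable in $\leq_{cx}$. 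Translating back through \eqref{defdil} gives the assertion, and the case $\E S > \E T$ is identical after interchanging the roles of $S$ and $T$.

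The one point requiring care is that \eqref{eqsuppcondncx} was deduced from the lemma under the assumption that the supports are intervals. I would therefore either carry the (implicit) interval hypothesis on $\supp(V\mid U=u)$, as the surrounding discussion does, or, to be safe, replace the containment of supports by the ordering of essential infima and suprema, which holds for \emph{any} $S\leq_{cx}T$ by testing against the convex functions $x\mapsto (x-a)_+$ and $x\mapsto (a-x)_+$ and letting $a$ tend to the relevant endpoint. I expect this minor regularity bookkeeping to be the only real obstacle; the key mechanism, namely that a common translation by distinct constants shifts both endpoints of the support strictly in the same direction, is otherwise immediate.
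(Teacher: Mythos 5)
Your argument is precisely the one the paper intends: Proposition \ref{propnodil} carries no separate proof in the appendix and is justified exactly by the centering-plus-support-shift reasoning around \eqref{eqsuppcondncx} that you reproduce, and your remark that the interval hypothesis can be dropped by testing $\leq_{cx}$ against $x\mapsto(x-a)_+$ and $x\mapsto(a-x)_+$ is a sound tightening. The only caveat --- which concerns the paper's statement as much as your proof --- is that the common support needs finite endpoints for the shifted endpoints to differ strictly (for two Gaussians with equal variance and distinct means the centered conditional laws coincide, so the dilation order does hold); this is satisfied in the intended application, where the conditional distributions are supported on $[0,1]$.
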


As studied in Section \ref{sec52}, the conditional convex order can be verified in various settings, including the additive error model \eqref{eqadderrmod}. 
In contrast, the approach via the dilation order criterion in Proposition \ref{propdisp} is too strong even in standard settings, as the following example shows.

\begin{example}[Dilation order]\label{exdisp}
     Consider the additive error models \(Y = X + \sigma \varepsilon\) and \(Y' = X + \sigma' \varepsilon\) for \(0 < \sigma < \sigma'\) where \(\varepsilon,X\) are standard normal and independent.
     To verify \eqref{eqpropdisp}, we obtain for \(V\sim U(0,1)\) independent of \(X\) that
     \begin{align}\label{eqdisp1}
         F_{F_Y(Y)|X = q_X(u)}^{-1}(V) &= \Phi\left(\frac{(1+\sigma) \Phi^{-1}(V) - \Phi^{-1}(u)}{\sigma}\right) \quad \text{and} \\
         \label{eqdisp2}
         F_{F_{Y'}(Y')|V = q_{X}(u)}^{-1}(V) &= \Phi\left(\frac{(1+\sigma') \Phi^{-1}(V) - \Phi^{-1}(u)}{\sigma'}\right)
     \end{align}
     for \(\lambda\)-almost all \(u\in (0,1)\,.\) It is not difficult to see that 
     \(\E[F_Y(Y) \mid X = q_X(u)] \ne \E[F_{Y'}(Y') \mid X = q_{X}(u)]\) for \(\lambda\)-almost all \(u\in (0,1)\,.\) Since \(\supp(F_Y(Y) \mid X = q_X(u)) = \supp(F_{Y'}(Y') \mid X = q_{X}(u)) = [0,1]\) for all \(u\in (0,1)\,,\) we obtain from Proposition \ref{propnodil} for \(\lambda\)-almost all \(u\in (0,1)\) that \((F_Y(Y) \mid X = q_X(u))\) and \((F_{Y'}(Y') \mid X' = q_{X'}(u))\) are not comparable in the dilation order (and thus neither in the dispersive order), see Figure \ref{figDilord}.
     Hence, Proposition \ref{propdisp} cannot be applied to prove \(\xi(Y,X) \geq \xi(Y',X)\,.\) Nonetheless, this inequality for \(\xi\) follows from Theorem \ref{theadderrmod} where we have shown that \((Y,X)\succcurlyeq_{ccx} (Y',X)\).
    
\end{example}

\begin{figure}
    \centering
    \includegraphics[width=0.8\linewidth, trim = 0 40 0 0, clip]{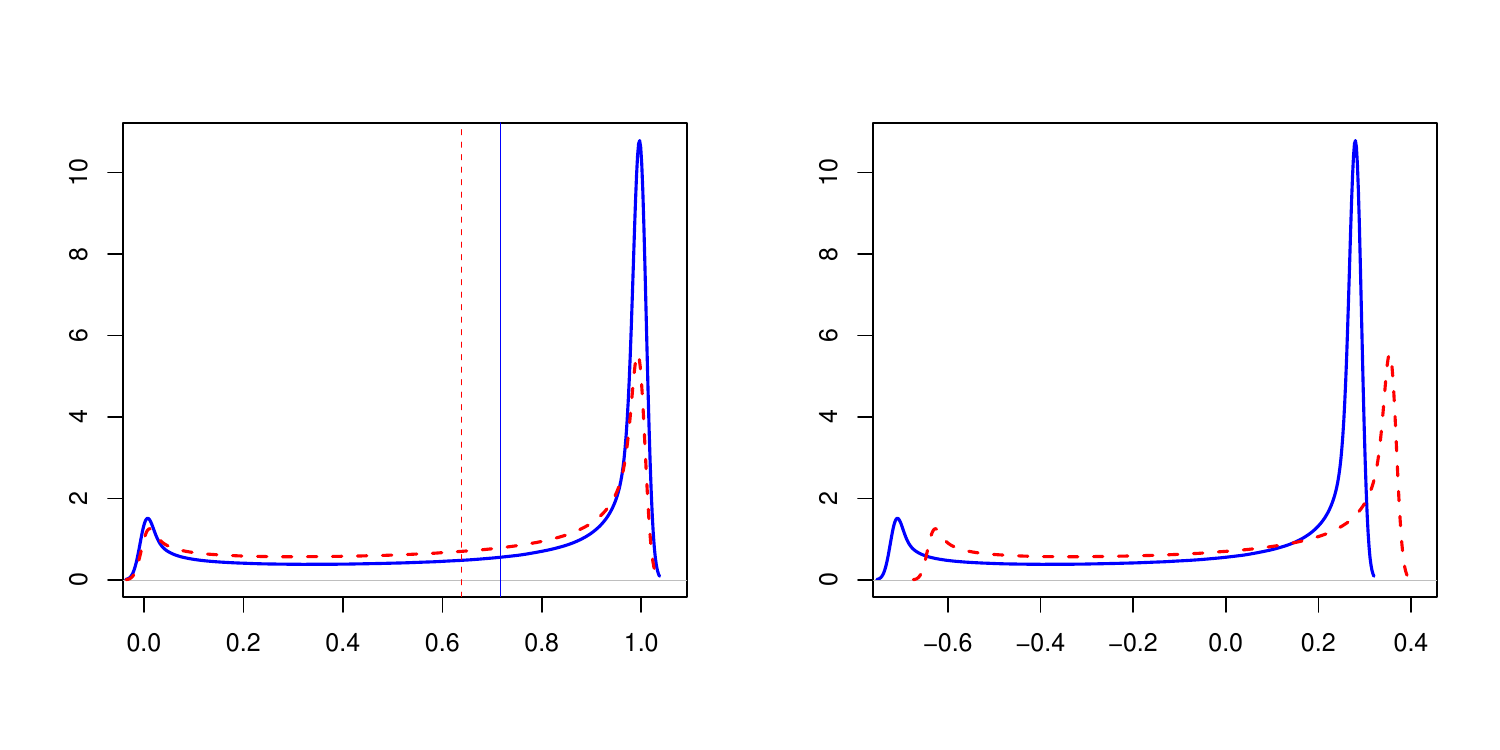}
    \caption{Left plot: Densities and means of the conditional distributions \(F_{Y}(Y)|X = q_{X}(0.1)\) for \(\sigma = 1\) (dashed) and \(\sigma = 2\) (solid). 
    Right plot: Densities from the left plot shifted by the mean. Since the supports are non-nested intervals satisfying \eqref{eqsuppcondncx}, they are not comparable in convex order.}
    \label{figDilord}
\end{figure}

\subsection{Variability of regression functions and magnitude of error curves}

The global dependence orderings studied in \cite{Shaked-2012,Shaked-2013} compare conditional expectations and conditional variances. As we discuss in the sequel, they are not sufficient for ordering results on dependence measures like \(\xi\).

Therefore, let \((Y,X)\) be a bivariate random vector. Then there exists a representation \(Y= f(X,U)\) a.s. for a measurable function \(f\) and a random variable \(U\) that is uniform on \((0,1)\) and independent of \(X\). The random variable \(U\) can be interpreted as noise. Informally, a weak (strong) influence of \(U\) on \(Y\) indicates strong (weak) dependence of \(Y\) on \(X\). 

The idea in \cite{Shaked-2012} is to define global dependence orderings for the regression function \(m(x) = \E[Y| X=x]\) and for the error function \(e(x) = \Var(Y| X=x)\).
For ordering the magnitude of \(e(X)\), the authors propose the (standard) stochastic order.
For \(m(X)\), they consider variability orderings such as the convex order, the dilation order, and the dispersive order. 
This approach comes closest to what we have been able to find in the literature on our approach, which compares the variability of the regression functions \(P(Y\geq q_Y(v)\mid X) = \E[\1_{\{\{Y\geq q_Y(v)\}\}} \mid X]\) for \emph{all} \(v\in (0,1)\). This is crucial for a dependence order underlying \(\xi\). 

In \cite{Shaked-2013}, the idea is to analyze the variability of the conditional expectation \(\tilde{m}(u) = \E[Y|U=u]\) and the magnitude of the conditional variance \(\tilde{e}(u) = \Var(Y|U=u)\). Similar to the approach in \cite{Shaked-2012}, this does not yield ordering results for dependence measures such as Chatterjee's rank correlation.

\makeatletter
\renewcommand{\appendix}{%
  \par
  \setcounter{section}{0}%
  \setcounter{subsection}{0}%
  \renewcommand{\thesection}{\Alph{section}}%
}
\makeatother

\appendix

\makeatletter
\renewcommand{\theequation}{\thesection.\arabic{equation}}
\setcounter{equation}{0}
\setcounter{theorem}{0} 
\renewcommand{\thetheorem}{\thesection.\arabic{theorem}}
\makeatother

\section*{\centering \huge Appendix}
\section{Auxiliary results and preparation for proofs in Appendix \ref{App:Proofs}}\label{appA}

For easier access to the proofs in Appendix \ref{App:Proofs}, we provide several useful results on generalized inverses, the convex order, the distributional transform, and the quantile transform.

\subsection{Generalized inverses}

For a distribution function \(G\), we denote by \(G^{-1}(v) := \inf\{y \mid G(y) \geq v\},\) \(v\in (0,1)\), the left-continuous generalized inverse of \(G\). We refer to \cite{Embrechts2013} and \cite[Chapter 21]{VdV2013} for various properties of generalized inverses and will frequently use the following specific properties. Therefore, we denote by \(G^-\) the left-continuous version of \(G\) and
we abbreviate \(\iota_G := G \circ G^{-1}\) and \(\iota_G^- := G^- \circ G^{-1}\).

\begin{proposition}\label{Prop.Inverse}
\begin{enumerate}[(i)]
  \item \label{Prop.Inverse1} \(G^{-1}(v) \leq y\) if and only if \(v \leq G(y)\),
  \item \label{Prop.Inverse1b} \(G^{-1} \circ G \circ G^{-1}(t) = G^{-1}(t)\) for all \(t\in (0,1)\),
  \item \label{Prop.Inverse2}\(\iota_G(t) = t = \iota_G^-(t)\) if and only if \(G\) is continuous at \(G^{-1}(t)\),
  \item \label{Prop.Inverse3}\(\iota_G^-\) is left-continuous while, in general, \(\iota_G\) is neither left- nor right-continuous, 
  \item \label{Prop.Inverse4}\(\iota_G(t) = t = \iota_G^-(t)\) for all \(t\in (0,1)\) if and only if \(G\) is continuous.
\end{enumerate}
\end{proposition}

Denote by \(y\mapsto \overline{F}_Y(y) := P(Y\geq y)\) the survival function of \(Y\).
Recall that we use the shorter notation $q_Y := F_Y^{-1}$ for the generalized inverse of a random variable \(Y\) with distribution function \(F_Y\).
The following example characterizes the marginal constraint in the definition of \(\preccurlyeq_{ccx}\).

\begin{lemma}[Marginal constraint]\label{lemmargconstraint}
    For real-valued random variables \(Y\) and \(Y'\), the following statements are equivalent:
    \begin{enumerate}[(i)]
        \item \label{lemmargconstraint1} \(\overline{\Ran(F_Y)} = \overline{\Ran(F_{Y'})}\),
        \item \label{lemmargconstraint2} \(\overline{F}_Y(q_Y(v)) = \overline{F}_{Y'}(q_{Y'}(v))\) for all \(v\in (0,1)\),
        \item \label{lemmargconstraint3} \(F_Y(q_Y(v)) = F_{Y'}(q_{Y'}(v))\) for \(\lambda\)-almost all \(v\in (0,1)\).
    \end{enumerate}
\end{lemma}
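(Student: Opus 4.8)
The plan is to reduce all three conditions to statements about the two functions from Proposition~\ref{Prop.Inverse}, namely $\iota_{F_Y}=F_Y\circ q_Y$ and $\iota_{F_Y}^-=F_Y^-\circ q_Y$, and then to read off each condition from the jump (gap) structure of the range. First I would record the reformulations. Since $\overline F_Y(y)=P(Y\ge y)=1-F_Y^-(y)$, we have $\overline F_Y(q_Y(v))=1-\iota_{F_Y}^-(v)$, so condition \eqref{lemmargconstraint2} is equivalent to $\iota_{F_Y}^-\equiv\iota_{F_{Y'}}^-$ on $(0,1)$, while \eqref{lemmargconstraint3} reads $\iota_{F_Y}=\iota_{F_{Y'}}$ $\lambda$-almost everywhere. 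Writing $G=F_Y$, I would note that, $G$ being nondecreasing and right-continuous with limits $0$ and $1$, its closed range is $\overline{\Ran(G)}=[0,1]\setminus\bigcup_k(a_k,b_k)$, where the $(a_k,b_k)$ are the at most countably many maximal open intervals disjoint from $\Ran(G)$; each arises from a jump of $G$ at some $d_k$ with $a_k=G(d_k^-)$ and $b_k=G(d_k)$.

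Next I would establish the elementary pointwise formulas. From Proposition~\ref{Prop.Inverse}\eqref{Prop.Inverse1}--\eqref{Prop.Inverse2} one gets $\iota_G^-(v)\le v\le\iota_G(v)$ for all $v$, with $\iota_G(v)=v$ exactly on $\Ran(G)$, and $\iota_G(v)=\iota_G^-(v)=v$ at every $v\in\overline{\Ran(G)}$ that is not a right endpoint $b_k$ (there $G$ is continuous at $q_Y(v)$). A direct computation of $q_Y=G^{-1}$ on a gap shows $\iota_G\equiv b_k$ and $\iota_G^-\equiv a_k$ on each $(a_k,b_k]$. Consequently $\{v:\iota_G^-(v)<v\}=\bigcup_k(a_k,b_k]$ exactly, whereas $\{v:\iota_G(v)>v\}$ equals $\bigcup_k(a_k,b_k)$ up to the countable set of unattained left endpoints $a_k$.

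These formulas immediately give \eqref{lemmargconstraint1}$\Leftrightarrow$\eqref{lemmargconstraint2}: the function $\iota_G^-$ depends only on the gap family, hence only on $\overline{\Ran(G)}$, so \eqref{lemmargconstraint1} forces $\iota_{F_Y}^-=\iota_{F_{Y'}}^-$; conversely the gaps are recovered as the maximal half-open intervals constituting $\{v:\iota_G^-(v)<v\}$, whose endpoints return $\overline{\Ran(G)}$, so \eqref{lemmargconstraint2} forces \eqref{lemmargconstraint1}. For \eqref{lemmargconstraint1}$\Rightarrow$\eqref{lemmargconstraint3}, under \eqref{lemmargconstraint1} the gaps coincide, so $\iota_{F_Y}=\iota_{F_{Y'}}$ on every gap and on $\overline{\Ran}$ outside the countably many right endpoints and unattained left endpoints; thus they agree off a countable set, i.e.\ $\lambda$-a.e. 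For the converse \eqref{lemmargconstraint3}$\Rightarrow$\eqref{lemmargconstraint1}, I would use that $\iota_{F_Y},\iota_{F_{Y'}}$ are nondecreasing, so $\lambda$-a.e.\ equality forces equality of their right-continuous modifications; since each is constant $=b_k$ on the positive-length interval $(a_k,b_k)$, the open sets $\{v:\iota>v\}$ have null symmetric difference, and here I would insert the small measure-theoretic fact that two unions of open intervals with null symmetric difference have the same closure (for open $U,W$ with $\lambda(U\setminus W)=0$, any subinterval of $U$ meets $W$ in full measure, hence $U\subseteq\overline W$). This recovers $(0,1)\setminus\overline{\Ran(G)}$, and therefore \eqref{lemmargconstraint1}.

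The main obstacle is precisely the direction \eqref{lemmargconstraint3}$\Rightarrow$\eqref{lemmargconstraint1}: the content is that the \emph{closed} set $\overline{\Ran(F_Y)}$ must be reconstructed from $\iota_{F_Y}$ only up to null sets. This is also where the distinction between the ``for all $v$'' in \eqref{lemmargconstraint2} and the ``$\lambda$-almost all $v$'' in \eqref{lemmargconstraint3} becomes essential: at a jump level the value of $\iota_{F_Y}$ depends on whether the boundary value is attained, so two random variables with the same closed range can have $\iota_{F_Y}$ differing at isolated points (cf.\ Example~\ref{exdiscY}), whereas $\iota_{F_Y}^-$ is left-continuous and agrees everywhere. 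What rescues the reconstruction is that every genuine gap has positive length, so the gaps—and hence $\overline{\Ran(F_Y)}$—are pinned down after discarding a null set.
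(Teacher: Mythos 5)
Your reductions are the right ones, and the directions \eqref{lemmargconstraint1}\(\Rightarrow\)\eqref{lemmargconstraint2} and \eqref{lemmargconstraint1}\(\Rightarrow\)\eqref{lemmargconstraint3} are fine: on each complementary gap \((a_k,b_k)\) of \(\overline{\Ran(F_Y)}\) one indeed has \(\iota_{F_Y}\equiv b_k\) and \(\iota_{F_Y}^-\equiv a_k\), and both functions agree with the identity off the gaps up to the countable set of gap endpoints. (For the record, the paper does not argue any of this from scratch: it quotes \eqref{lemmargconstraint1}\(\Leftrightarrow\)\eqref{lemmargconstraint3} from \cite[Proposition~2.14]{Ansari-2021} and obtains \eqref{lemmargconstraint2}\(\Leftrightarrow\)\eqref{lemmargconstraint3} from Proposition~\ref{Prop.Inverse}\eqref{Prop.Inverse2}--\eqref{Prop.Inverse3}, so you are attempting strictly more.)

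The step that fails is the reconstruction in \eqref{lemmargconstraint3}\(\Rightarrow\)\eqref{lemmargconstraint1}. You pass from \(\lambda\)-a.e.\ equality of \(\iota_{F_Y}\) and \(\iota_{F_{Y'}}\) to the statement that the sets \(\{v:\iota(v)>v\}\) have null symmetric difference, then to equality of their closures, and then claim this recovers \((0,1)\setminus\overline{\Ran}\). Neither of the last two inferences is valid. First, two open sets with the same closure need not coincide (compare \((0,1)\) with a dense open subset of full measure). Second, and more to the point, even exact knowledge of the gap set up to a \(\lambda\)-null set does not determine the closed range: take \(Y\) with \(\overline{\Ran(F_Y)}=\{0\}\cup[1/2,1]\) (one atom of mass \(1/2\) below a continuous part) and \(Y'\) with \(\overline{\Ran(F_{Y'})}=\{0,1/4\}\cup[1/2,1]\) (two atoms of mass \(1/4\)). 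The gap sets \((0,1/2)\) and \((0,1/4)\cup(1/4,1/2)\) differ by the single point \(1/4\), yet the closed ranges differ. The lemma survives this pair only because \(\iota_{F_Y}\equiv 1/2\) while \(\iota_{F_{Y'}}\equiv 1/4\) on \((0,1/4)\); that is, it is the \emph{values} of \(\iota\) on the gaps---which you set up but then discard---that carry the information. The correct completion is: fix a gap \((a_k,b_k)\) of \(F_Y\); a.e.\ on it \(\iota_{F_{Y'}}=b_k\); since \(\iota_{F_{Y'}}(w)=\min\big(\Ran(F_{Y'})\cap[w,1]\big)\), any point \(r\in\Ran(F_{Y'})\cap(a_k,b_k)\) would force \(\iota_{F_{Y'}}\leq r<b_k\) on all of \((a_k,r)\), a set of positive measure, a contradiction; and then \(\overline{\Ran(F_{Y'})}\cap(a_k,b_k)=\emptyset\) follows because a limit of range points lying outside \((a_k,b_k)\) cannot land inside it. By symmetry the gaps coincide exactly. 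A cousin of the same issue affects your recipe for \eqref{lemmargconstraint2}\(\Rightarrow\)\eqref{lemmargconstraint1}: when two gaps are adjacent (\(b_j=a_k\)), the ``maximal half-open intervals of \(\{v:\iota_G^-(v)<v\}\)'' merge into \((a_j,b_k]\) and their endpoints miss the shared boundary point \(b_j\in\overline{\Ran(G)}\); again one must read off the values \(\iota_G^-\equiv a_k\) on \((a_k,b_k]\) rather than only the set where \(\iota_G^-\) is below the identity.
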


\subsection{Convex order}
For the proofs of several results,
we use the following properties of convex order; see \cite[Example 1.10.5]{Muller-2002} and \cite[Section 3.A.2]{Shaked-2007}.

\begin{lemma}[Some properties of convex order]\label{lemminmaxcx}
Let \(S_n,T_n,S,T\) be random variables with values in \([0,1]\). Then the following statements hold true:
\begin{enumerate}[(i)]
    \item \label{lemminmaxcx1} Minimal elements: \(\E S \leq_{cx} S\).
    \item \label{lemminmaxcx2} Maximal elements: If \(\E S = \E T\) and if \(T\) takes values only in \(\{0,1\}\), then \(S\leq_{cx} T\).
    \item \label{lemminmaxcx5} Antisymmetry: \(S\leq_{cx} T\) and \(S\geq_{cx} T\) implies \(S\eqd T\).
    \item \label{lemminmaxcx3} Invariance under linear transformations: \(S\leq_{cx} T\) implies \(c\,S + d \leq_{cx} c\, T + d\) for all constants \(c,d\in \R\).
    \item \label{lemminmaxcx4} Stability under weak convergence: If \(S_n\xrightarrow[]{~d~} S\), \(T_n\xrightarrow{~d~} T\), \(\E S_n\to \E S\), \(\E T_n  \to \E T \), and \(S_n \leq_{cx} T_n\) for all \(n\), then \(S\leq_{cx} T\).
\end{enumerate}
\end{lemma}

The conditional convex order is defined by comparing conditional survival probabilities \(P(Y\geq q_Y(v) \mid \XX)\) in convex order for \emph{all} \(v\in (0,1)\). Equivalent versions can be achieved for strict survival probabilities and cumulative probabilities as follows. Due to continuity properties of the transformations \(\iota_G\) and \(\iota_G^{-}\) in Proposition \ref{Prop.Inverse}\,\ref{Prop.Inverse2} and \ref{Prop.Inverse3}, exceptional null sets have to be considered.

\begin{proposition}[Versions of conditional convex order]\label{propversccx}~\\
    The following statements are equivalent:
    \begin{enumerate}[(i)]
        \item \label{propversccx1} \((Y,\XX) \preccurlyeq_{ccx} (Y',\XX')\)
        \item \label{propversccx2} \(P(Y < q_Y(v) \mid \XX) \leq_{cx} P(Y' < q_{Y'}(v)\mid \XX')\) for all \(v\in (0,1)\).
        \item \label{propversccx3} \(P(Y\leq q_Y(v) \mid \XX) \leq_{cx} P(Y'\leq q_{Y'}(v)\mid \XX')\) for \(\lambda\)-almost all \(v\in (0,1)\).
        \item \label{propversccx4} \(P(Y > q_Y(v) \mid \XX) \leq_{cx} P(Y' > q_{Y'}(v)\mid \XX')\) for \(\lambda\)-almost all \(v\in (0,1)\).
    \end{enumerate}
\end{proposition}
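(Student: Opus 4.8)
The plan is to pass to the distributional transforms $U_Y := F_Y(Y)$ and $U_{Y'} := F_{Y'}(Y')$, which turn all four conditions into convex-order comparisons of conditional laws of a single $[0,1]$-valued random variable. The starting point is the pair of identities $\{Y \geq q_Y(v)\} = \{U_Y \geq v\}$ and $\{Y \leq q_Y(v)\} = \{U_Y \leq \iota_{F_Y}(v)\}$, the first exact and the second up to a $P$-null set; both follow from Proposition \ref{Prop.Inverse}\eqref{Prop.Inverse1} (namely $q_Y(v)\leq y \Leftrightarrow v\leq F_Y(y)$) together with the fact that the flat stretches of $F_Y$ carry no mass. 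Here $\iota_{F_Y} = F_Y\circ q_Y$. Taking complements, these rewrite the four conditional probabilities in \eqref{propversccx1}--\eqref{propversccx4} as $P(U_Y \geq v\mid\XX)$, $P(U_Y < v\mid\XX)$, $P(U_Y \leq \iota_{F_Y}(v)\mid\XX)$ and $P(U_Y > \iota_{F_Y}(v)\mid\XX)$, and analogously for the primed vector.

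First I would dispatch the two ``cheap'' equivalences. Since $x\mapsto 1-x$ is affine and its own inverse, Lemma \ref{lemminmaxcx}\eqref{lemminmaxcx3} gives $S\leq_{cx}T \Leftrightarrow 1-S\leq_{cx}1-T$ for $[0,1]$-valued $S,T$. Applying this to the complementary pairs $P(U_Y\geq v\mid\XX)=1-P(U_Y<v\mid\XX)$ yields \eqref{propversccx1}$\Leftrightarrow$\eqref{propversccx2} (both for all $v$), while $P(U_Y>\iota_{F_Y}(v)\mid\XX)=1-P(U_Y\leq\iota_{F_Y}(v)\mid\XX)$ yields \eqref{propversccx3}$\Leftrightarrow$\eqref{propversccx4} (both for $\lambda$-almost all $v$). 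It therefore remains to link the strict-cumulative version \eqref{propversccx2} with the cumulative version \eqref{propversccx3}.

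Next I would record the marginal constraint: each of the four conditions forces equality of the means on the two sides of $\leq_{cx}$, which by Lemma \ref{lemmargconstraint} is exactly $\overline{\Ran(F_Y)}=\overline{\Ran(F_{Y'})}$ and hence gives $\iota_{F_Y}(v)=\iota_{F_{Y'}}(v)$ for $\lambda$-almost all $v$; this also identifies the atom structures of $U_Y$ and $U_{Y'}$ in the $v$-scale. For \eqref{propversccx2}$\Rightarrow$\eqref{propversccx3}, I would fix a $v$ with $t:=\iota_{F_Y}(v)=\iota_{F_{Y'}}(v)$ and write $P(U_Y\leq t\mid\XX)=\lim_{s\downarrow t}P(U_Y<s\mid\XX)$ as an almost-sure monotone limit; applying \eqref{propversccx2} at each $s$ and passing to the limit by the weak-convergence stability of $\leq_{cx}$ (Lemma \ref{lemminmaxcx}\eqref{lemminmaxcx4}), with both means converging to $t$, gives the claim for almost all $v$. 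The converse \eqref{propversccx3}$\Rightarrow$\eqref{propversccx2} is symmetric: for every $w$ I would write $P(U_Y<w\mid\XX)=\lim_{s\uparrow w}P(U_Y\leq s\mid\XX)$ and approximate through values $s=\iota_{F_Y}(v)$ lying in the good set of \eqref{propversccx3}, again closing via Lemma \ref{lemminmaxcx}\eqref{lemminmaxcx4}.

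The main obstacle is precisely this last step. Unlike the complementation equivalences, the passage between ``$\leq q_Y(v)$'' and ``$< q_Y(v)$'' is \emph{not} a pointwise almost-everywhere identity: the two differ by the conditional atom $P(Y=q_Y(v)\mid\XX)$, and since each atom $y_0$ of $Y$ inflates to the whole interval $(F_Y(y_0^-),F_Y(y_0)]$ under $q_Y$, this discrepancy occupies a set of $v$ of positive Lebesgue measure. The remedy is the monotone one-sided limits above together with the matched atom structure supplied by the marginal constraint, which guarantees that the limits on the $Y$- and $Y'$-sides are taken at the same parameter and that the approximating means converge — the two ingredients required to invoke stability of the convex order under weak convergence. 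This is also the conceptual reason why \eqref{propversccx3} and \eqref{propversccx4} can only be asserted for $\lambda$-almost all $v$: the exceptional null set is exactly the (at most countable) set of jump points at which $\iota_{F_Y}$ and $\iota_{F_{Y'}}$ may disagree, in line with Proposition \ref{Prop.Inverse}\eqref{Prop.Inverse2} and \eqref{Prop.Inverse3}.
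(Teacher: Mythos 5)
Your proposal is correct and follows essentially the same route as the paper's proof: the pairs \eqref{propversccx1}$\Leftrightarrow$\eqref{propversccx2} and \eqref{propversccx3}$\Leftrightarrow$\eqref{propversccx4} via the affine invariance of $\leq_{cx}$ (Lemma \ref{lemminmaxcx}\eqref{lemminmaxcx3}), and the link \eqref{propversccx2}$\Leftrightarrow$\eqref{propversccx3} via one-sided monotone approximation with converging means, closed by the weak-convergence stability of $\leq_{cx}$ (Lemma \ref{lemminmaxcx}\eqref{lemminmaxcx4}), with the marginal constraint aligning the approximation parameters on both sides. Your reformulation through the distributional transform $U_Y=F_Y(Y)$ merely recasts the paper's case analysis over $v\in \mathrm{int}(\overline{\Ran(F_Y)})$ versus $v\notin\overline{\Ran(F_Y)}$ into the $s$-scale; the substance is identical.
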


The following example shows that the exceptional null set in the version of Proposition \ref{propversccx}\,\ref{propversccx3} is necessary.

\begin{example}\label{exdiscY}
    Consider a random variable \(Y\) with distribution function given by the left-hand graph in Figure \ref{figdefschur}. The right-hand graph is the distribution function of the random variable \(Y' = Y+2\) if \(Y\leq 0.5\,,\) and \(Y'= Y+1.5\) if \(Y>0.5\,.\) We observe that, for \(v = F_Y(0.5) = 0.4\), it is \(P(Y\leq q_Y(v)) = 0.4 \ne 0.8 = P(Y'\leq q_{Y'}(v))\).  Hence, \(P(Y\leq q_Y(v)\mid \XX)\) and \(P(Y'\leq q_{Y'}(v) \mid \XX')\) are not comparable in convex order, independent of the choice of \(\XX\). However, since \(\overline{\Ran(F_Y)} = \overline{\Ran(F_{Y'})}\) and since \(F_Y^-\circ q_Y\) and \(F_{Y'}^-\circ q_{Y'}\) are left-continuous, it follows that \(P(Y\geq q_Y(v)) = P(Y'\geq q_{Y'}(v))\) for \emph{all} \(v\in (0,1)\). Consequently, \((Y,\XX)\) and \((Y',\XX)\) are comparable in conditional convex order; more precisely, it is \((Y,\XX) =_{ccx} (F_Y(Y),\XX) =_{ccx} (F_{Y'}(Y'),\XX) =_{ccx} (Y',\XX)\) for any choice of \(\XX\) using distributional invariance of \(\preccurlyeq_{ccx}\) and \(F_{Y'}(Y') \eqd F_Y(Y)\).
\end{example}

\begin{figure}
    \centering
    \includegraphics[width=0.7\linewidth]{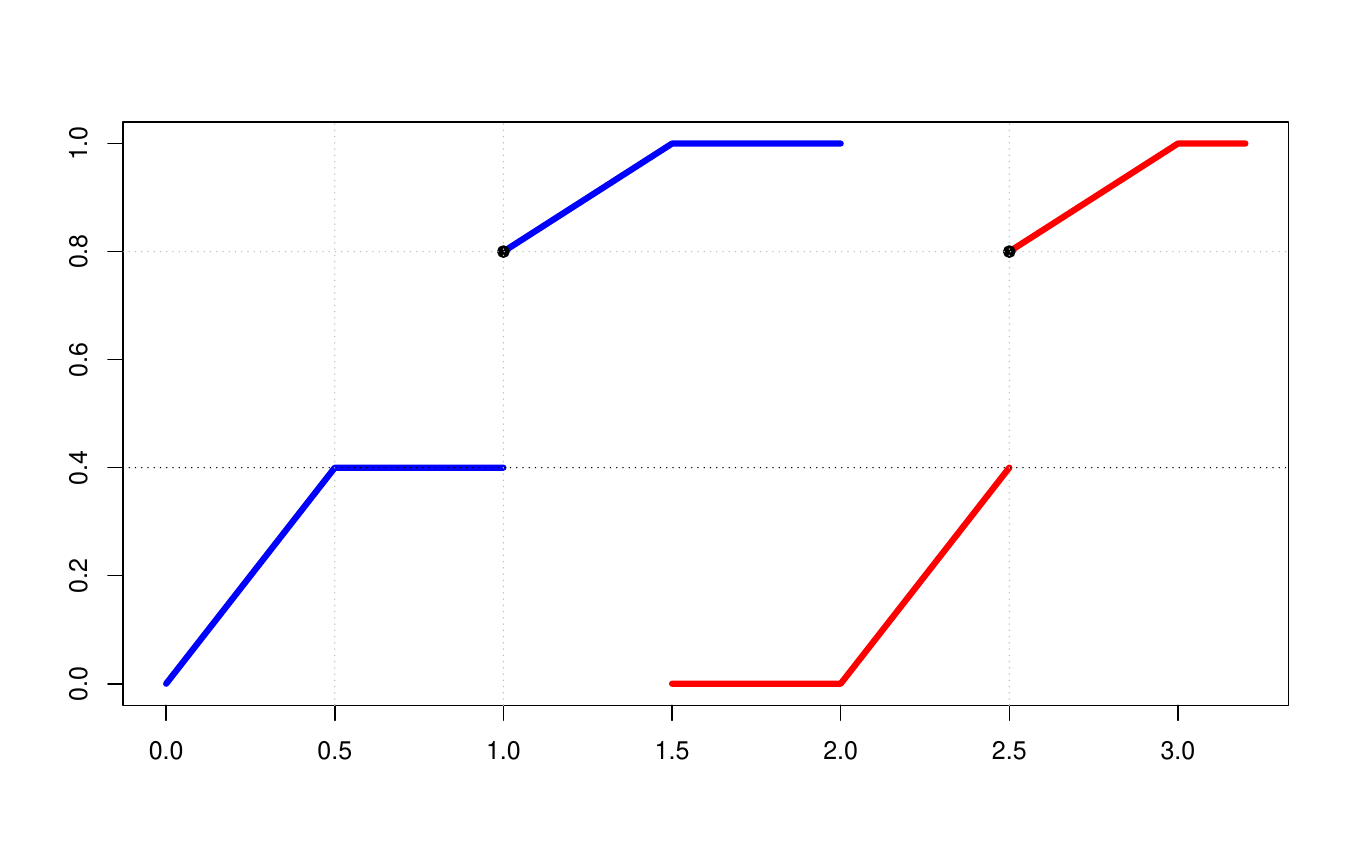}
    \caption{Left: the distribution function of a random variable \(Y\,;\) right: the distribution function of a random variable \(Y'\) which is a shift of \(Y\) given by \(Y' = Y+2\) if \(Y\leq 0.5\,,\) and \(Y'= Y+1.5\) if \(Y>0.5\,;\) see Example \ref{exdiscY}. Since \(Y\) and \(Y'\) have the same strength of functional dependence on a random vector \(\XX\) in terms of Chatterjee's rank correlation (i.e., \(\xi(Y|\XX) = \xi(Y'|\XX)\)), they should be equally ranked in a suitable global dependence order.}
    \label{figdefschur}
\end{figure}

\subsection{Distributional transform and quantile transform}

Let \(\VV=(V_1,\ldots,V_p)\) be a random vector that is independent of \(\XX = (X_1,\ldots,X_p)\) and uniform on \((0,1)^p\). Then
the \emph{multivariate distributional transform}  \(\tau_{\XX} (\XX,\VV)\) of \(\XX\) (also known as generalized Rosenblatt transform) is defined by  
\begin{align*}
    \tau_{\XX} (\xx,\boldsymbol{\lambda})
    := \big( F_1(x_1,\lambda_1), F_2(x_2,\lambda_2|x_1) \dots, F_p(x_p,\lambda_p|x_1,\dots,x_{p-1}) \big)
\end{align*}
for \(\xx=(x_1,\ldots,x_p) \in \mathbb{R}^p\) and \(\boldsymbol{\lambda}=(\lambda_1,\ldots,\lambda_p) \in [0,1]^p\), where
\begin{eqnarray*}
    F_1(x_1,\lambda_1) 
    & := & P(X_1 < x_1) + \lambda_1 \, P(X_1 = x_1)
    \\
    F_i(x_i,\lambda_i|x_1,\dots,x_{i-1}) 
    & := & P(X_i < x_i | X_1=x_1, \dots, X_{i-1}=x_{i-1}) 
    \\
    &    & +\, \lambda_i \, P(X_i = x_i | X_1=x_1, \dots, X_{i-1}=x_{i-1})\,, \quad i\in \{2,\ldots,p\}\,; 
\end{eqnarray*}
see \cite[Section 1.3]{Ru-2013}.
For \(p=1\) and for a random variable \(X\) with continuous distribution function \(F_X\,,\) the distributional transform \(\tau_X(X,V)\) simplifies to \(F_X(X)\,,\) which is uniform on \((0,1)\).

As an inverse transformation of \(\tau_\XX\,,\) we consider for a random vector \(\UU=(U_1,\ldots,U_p)\), uniformly on \((0,1)^p\) distributed, the \emph{multivariate quantile transform} \({\bf q}_\XX (\UU):=(\zeta_1,\ldots,\zeta_p)\). It is iteratively defined by 
\begin{align}
\begin{split}
    \zeta_1 &:= F_{X_1}^{-1}(U_1)\,, \label{MQT1}  \\
    \zeta_i &:= F^{-1}_{X_i|X_{i-1}=\xi_{i-1},\ldots,X_1=\xi_1}(U_i)~~~ \text{for all } i\in \{2,\ldots,p\}
\end{split}
\end{align}
where \(F_{W|\ZZ=\zz}\) and \(F^{-1}_{W|\ZZ=\zz}\) denote the conditional distribution function of \(W\) and its generalized inverse given \(\ZZ=\zz\).

According to \cite[Theorem 1.12]{Ru-2013},
\begin{align}\label{eqpropqt0}
\tau_\XX(\XX,\VV) &\text{ is a random vector that is uniformly on } (0,1)^p \text{ distributed,}\\
 \label{eqpropqt} {\bf q}_\XX(\UU) &\text{ is a random vector with distribution function } F_\XX,
\end{align}
and the multivariate quantile transform is inverse to the multivariate distributional transform,
i.e., 
\begin{align}\label{eqpropqt2}
  \XX = {\bf q}_\XX \big( \tau_{\XX} (\XX,\VV) \big) \qquad P\text{-almost surely.}
\end{align}

\section{Proofs} \label{App:Proofs}

\subsection{Proof of Theorem \ref{thefundpropS}}

\begin{proof}[Proof of Theorem \ref{thefundpropS}.] 
Axioms \ref{axdepord0} and \ref{axdepord1} follow from the definition of the conditional convex order (Definition \ref{defccx}). \\
To verify Axiom \ref{axdepord2}, first note that the deterministic random variable \(P(Y\geq q_Y(v))\) is minimal in convex order within the class of \([0,1]\)-valued random variables having mean \(P(Y\geq q_Y(v))\); see Lemma \ref{lemminmaxcx}. Therefore, if \(\XX\) and \(Y\) are independent, then \((Y,\XX)\preccurlyeq_{ccx} (Y',\XX')\) for all random vectors \((Y',\XX')\in \cR_A\).
For the reverse direction, assume that \((Y,\XX)\preccurlyeq_{ccx} (Y',\XX')\) for all random vectors
\((Y',\XX')\in \cR_A\,.\) 
This implies for deterministic \(\XX'\), say \(\XX' = \mathbf{0}\) a.s., that 
\begin{align*}
    P(Y\geq q_Y(v) \mid \XX) \leq_{cx} P(Y'\geq q_{Y'}(v) \mid \XX') = P(Y'\geq q_{Y'}(v)) = P(Y\geq q_{Y}(v))
\end{align*}
for all \(v\in [0,1]\).
Since the unconditional random variable \(P(Y\geq q_{Y}(v))\) is deterministic and thus minimal in convex order (see Lemma \ref{lemminmaxcx}), we have for all \(y\in \R\) that \(P(Y\geq y \mid \XX) \eqd P(Y\geq y)\). This implies for all \(y\in \R\) that \(P(Y\geq y \mid \XX = \xx) = P(Y\geq y)\) for \(P^\XX\)-almost all \(\xx\), and thus \(\XX\) and \(Y\) are independent.

To verify Axiom \ref{axdepord3}, first assume that \(Y\) perfectly depends on \(\XX\), i.e., there exists a measurable function \(f\) such that \(Y = f(\XX)\) almost surely. This implies for all \(v\in (0,1)\) that
\begin{align*}
    P(Y\geq q_Y(v) \mid \XX) 
    & = \mathds{1}_{\{f(\XX)\geq q_Y(v)\}}\,.
\end{align*}
The random variable \(P(Y\geq q_Y(v) \mid \XX)\) is a maximal element in convex order within the class of \([0,1]\)-valued random variables having mean \(P(Y\geq q_Y(v))\); see Lemma \ref{lemminmaxcx}. 
Hence, for \((Y',\XX')\in \cR_A\), using the marginal constraint, it follows that \(P(Y'\geq q_{Y'}(v)) = P(Y\geq q_Y(v))\) and thus \(P(Y'\geq q_{Y'}(v) \mid \XX') \leq_{cx} P(Y\geq q_Y(v) \mid \XX)\) for all \(v\in (0,1)\).
Therefore, \((Y',\XX') \preccurlyeq_{ccx} (Y,\XX)\) for all random vectors \((Y',\XX')\in \cR_A\).
For the reverse direction, assume that \((Y',\XX')\preccurlyeq_{ccx} (Y,\XX)\) for all \((Y',\XX')\in \cR_A\). This implies, in particular, \((Y,Y)\preccurlyeq_{ccx} (Y,\XX)\) and thus for all \(v\in (0,1)\) that
\begin{align*}
    P(Y\geq q_{Y}(v) \mid \XX) 
    \geq_{cx} P(Y\geq q_Y(v) \mid Y) 
    = \mathds{1}_{\{Y \geq q_Y(v)\}}\,.
\end{align*}
The right-hand side in the above inequality is a random variable that is maximal in convex order within the class of \([0,1]\)-valued random variables having mean \(P(Y\geq q_Y(v))\) (see Lemma \ref{lemminmaxcx}). This implies equality in convex order and, by antisymmetry of convex order, for all \(v\in (0,1)\) that \(P(Y\geq q_Y(v) \mid \XX) \eqd P(Y\geq q_Y(v) \mid Y) = \mathds{1}_{\{Y \geq q_Y(v)\}}\). 
It follows that, for all \(y\in \R\), \(P(Y\geq y \mid \XX = \xx ) \in \{0,1\}\) for \(P^\XX\)-almost all \(\xx\).
Hence, the conditional distribution \(Y\mid \XX = \xx\) is degenerate for \(P^\XX\)-almost all \(\xx\), which means that \(Y\) is a measurable function of \(\XX\); see the reasoning in \cite[Proof of Theorem 9.2]{chatterjee2021}.

To verify the information gain inequality in Axiom \ref{axdepord4}, we have
\begin{align*}
    \E \varphi\left( P(Y\geq q_Y(v) \mid \XX) \right) 
    & = \E\left[ \E \varphi\left( P(Y\geq q_Y(v) \mid \XX) \right) \mid (\XX,\ZZ) \right] \\
    & \leq \E \varphi\left( \E\left[P(Y\geq q_Y(v) \mid \XX) \mid (\XX,\ZZ)\right]\right) \\
    & = \E \varphi\left( P(Y\geq q_Y(v) \mid (\XX,\ZZ))\right)
\end{align*}
using Jensen's inequality for conditional expectation.

To verify Axiom \ref{axdepord5}, assume first that \((Y,\XX)=_{ccx} (Y,(\XX,\ZZ))\,.\) 
This implies \(\xi(Y,\XX)=\xi(Y,(\XX,\ZZ))\,.\) The latter means that \(Y\) and \(\ZZ\) are conditionally independent given \(\XX\,;\) see \cite[Lemma 11.2]{chatterjee2021}.
For the reverse direction, conditional independence of \(Y\) and \(\ZZ\) given \(\XX\) implies for all \(v\) that \(P(Y\geq q_Y(v)\mid \XX) = P( Y\geq q_Y(v) \mid (\XX,\ZZ))\) almost surely and thus \((Y,\XX) =_{ccx} (Y,(\XX,\ZZ))\). 

To verify Axiom \ref{axdepord6}, observe that \(\overline{\Ran(F_{g(Y)})} = \overline{\Ran(F_Y)} \). It follows that \(P(g(Y)\geq q_{g(Y)}(v)) = P(Y\geq q_Y(v))\) for all \(v\in (0,1)\,;\) see Lemma \ref{lemmargconstraint}.
Hence, the invariance property for \(Y\) follows from the definition of conditional convex order.
To show the invariance property for \(\XX\), let \(\bold{h}\) be bijective. This implies that the \(\sigma\)-algebras generated by \(\XX\) and \(\bold{h}(\XX)\) coincide. Hence, the statement follows from the definition of the conditional convex order.

Axiom \ref{axdepord7}: Since \(\overline{\Ran(F_{F_Y(Y)})} = \overline{\Ran(F_Y)}\), distributional invariance for \(Y\) follows similar to the proof of Axiom \ref{axdepord6}. To prove distributional invariance for \(\XX\),
    it is straightforward to verify that \(\xi(X_k,F_{X_k}(X_k)) = 1\). Hence, there exists a measurable function \(f_k\) such that \(X_k = f_k(F_{X_k}(X_k))\) almost surely. Applying the data processing inequality twice, we obtain
    \begin{align*}
        \big(Y, (F_{X_1}(X_1),\ldots,F_{X_p}(X_p))\big) &\preccurlyeq_{ccx} \big(Y,(X_1,\ldots,X_p)\big) \\
        &=_{ccx} \big(Y, (f_1(F_{X_1}(X_1)),\ldots,f_p(F_{X_p}(X_p)))\big) \\
        &\preccurlyeq_{ccx} \big(Y, (F_{X_1}(X_1),\ldots,F_{X_p}(X_p))\big),
    \end{align*}
    which proves the statement.
%
\end{proof}

\subsection{Proof of Theorem \ref{propcharS}}

The proof of Theorem \ref{propcharS} is based on several auxiliary results as well as on Theorem \ref{thecharSchur} and Theorem \ref{lemtrafSI} which we prove first.

\begin{proof}[Proof of Theorem \ref{thecharSchur}]
    The conditional convex order \((Y,\XX)\preccurlyeq_{ccx} (Y',\XX')\) is equivalent to \(P(Y\leq q_Y(v)\mid \XX)\leq_{cx} P(Y'\leq q_{Y'}(v)\mid \XX')\) for \(\lambda\)-almost all \(v\in (0,1)\); see Proposition \ref{propversccx}. However, the latter is by the Hardy-Littlewood-Polya theorem (see Lemma \ref{lemhlpt}) equivalent to \(\eta_{Y|\XX}^v \prec_S \eta_{Y'|\XX'}^v\) for \(\lambda\)-almost all \(v\in (0,1)\).
\end{proof}

For the proof of Theorem \ref{lemtrafSI}, we make use of the following result on rearrangements of conditional distribution functions.

\begin{lemma}[Rearranged conditional distribution function]\label{lemrearrqt}
Let \((Y,\XX)\) be a \((1+p)\)-dimensional random vector. Then there exists a function \(\R \times (0,1) \ni (y,u)\mapsto F_u(y)\) such that \(u\mapsto F_u(y)\) is a decreasing rearrangement of \(\uu\mapsto P(Y\leq y \mid \XX = q_\XX(\uu))\) for all \(y\in \R\) and \(y\mapsto F_u(y)\) is a distribution function for all \(u\in (0,1)\).
\end{lemma}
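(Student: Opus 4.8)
The plan is to construct the desired family explicitly: for each fixed $y$ take, as a function of $u$, the generalized inverse of the distribution function of the superlevel sets of $\uu\mapsto P(Y\le y\mid \XX={\bf q}_\XX(\uu))$, and then repair right-continuity in $y$ by passing to a right-continuous modification, checking that this modification does not destroy the rearrangement property. First I would fix a regular version of the conditional distribution of $Y$ given $\XX$ (which exists since $\R$ is Polish), so that for every $\uu\in(0,1)^p$ the map $g_y(\uu):=P(Y\le y\mid \XX={\bf q}_\XX(\uu))$ is a genuine distribution function in $y$; in particular it is nondecreasing and right-continuous in $y$, has limits $0$ and $1$, and satisfies $g_{y_1}\le g_{y_2}$ pointwise whenever $y_1\le y_2$. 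Setting $\mu_y(t):=\lambda^p(\{\uu:g_y(\uu)>t\})$, I would define
\begin{align*}
F_u(y):=\inf\{t\in\R:\mu_y(t)\le u\},\qquad (y,u)\in\R\times(0,1).
\end{align*}
Since $t\mapsto\mu_y(t)$ is nonincreasing, $[0,1]$-valued, and right-continuous, $u\mapsto F_u(y)$ is exactly the generalized inverse of $\mu_y$, whence it is nonincreasing, takes values in $[0,1]$, and satisfies the equivalence $F_u(y)\le t\iff\mu_y(t)\le u$; the identity $\lambda(\{u:F_u(y)>t\})=\mu_y(t)=\lambda^p(\{g_y>t\})$ then shows that $u\mapsto F_u(y)$ is a decreasing rearrangement of $g_y$ in the sense of \eqref{defSchurfun}.

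Next I would record the behaviour in $y$. Since the superlevel sets of $g_{y_1}$ and $g_{y_2}$ are nested for $y_1\le y_2$, one has $\mu_{y_1}(t)\le\mu_{y_2}(t)$, and the equivalence above gives that $y\mapsto F_u(y)$ is nondecreasing. Letting $y\to\pm\infty$ and using $g_y\downarrow0$ respectively $g_y\uparrow1$ together with continuity of measure yields $\mu_y(t)\to0$ resp.\ $\to1$ on the relevant ranges of $t$, from which $F_u(y)\to0$ as $y\to-\infty$ and $F_u(y)\to1$ as $y\to+\infty$. The one property the inf-formula need not supply directly is right-continuity in $y$: continuity from above gives $\mu_y(t)\downarrow\mu_{y_0}(t)$ as $y\downarrow y_0$, but this does not force $\mu_y(t)\le u$ for $y$ near $y_0$ when $\mu_{y_0}$ is flat at the level $u$. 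This is the main obstacle, and I would circumvent it rather than fight it.

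Finally I would pass to the right-continuous modification
\begin{align*}
\widehat F_u(y):=\lim_{y'\downarrow y}F_u(y')=\inf_{y'>y}F_u(y'),
\end{align*}
which is well defined because $y\mapsto F_u(y)$ is nondecreasing. For each fixed $u$ the map $y\mapsto\widehat F_u(y)$ is then nondecreasing, right-continuous, and has the limits $0$ and $1$, hence is a distribution function. It remains to verify that for each fixed $y$ the map $u\mapsto\widehat F_u(y)$ is still a decreasing rearrangement of $g_y$. It is nonincreasing in $u$, being a pointwise limit of the nonincreasing functions $u\mapsto F_u(y')$, and it dominates $F_\cdot(y)$ pointwise. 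To show the two agree for $\lambda$-almost every $u$ I would integrate: since a rearrangement preserves the integral and ${\bf q}_\XX(\UU)\eqd\XX$ for $\UU$ uniform on $(0,1)^p$,
\begin{align*}
\int_0^1 F_u(y')\,\de u=\int_{(0,1)^p}g_{y'}\,\de\lambda^p=P(Y\le y')=F_Y(y'),
\end{align*}
so dominated convergence together with the right-continuity of $F_Y$ gives $\int_0^1\widehat F_u(y)\,\de u=F_Y(y)=\int_0^1 F_u(y)\,\de u$. Combined with $\widehat F_\cdot(y)\ge F_\cdot(y)$, this forces $\widehat F_u(y)=F_u(y)$ for $\lambda$-almost all $u$; hence $u\mapsto\widehat F_u(y)$ is equimeasurable with $g_y$ and, being nonincreasing, is a decreasing rearrangement of $g_y$ for every $y$. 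Setting $F_u:=\widehat F_u$ then yields the family asserted in the lemma.
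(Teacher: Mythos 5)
Your proof is correct, but it takes a genuinely different route from the paper's. The paper starts from an arbitrary decreasing rearrangement $G_\cdot(y)$ of $\uu\mapsto P(Y\leq y\mid \XX={\bf q}_\XX(\uu))$ for each $y$, integrates it to form the bivariate distribution function $H(u,y)=\int_0^u G_t(y)\,\de t$, and then invokes the existence of regular conditional distributions (disintegration, \cite[Theorem 6.3]{Kallenberg-2002}) to extract a family of genuine distribution functions $F_u$ with $\int_0^u F_t(y)\,\de t=H(u,y)$; the a.e.\ identity $F_t(y)=G_t(y)$ then transfers the rearrangement property. You instead build the rearrangement explicitly as the generalized inverse of $t\mapsto\lambda^p(\{g_y>t\})$, check monotonicity and the limits in $y$ by hand, isolate right-continuity in $y$ as the only property that can fail, and repair it with the right-continuous modification, using integral preservation of rearrangements plus the pointwise domination $\widehat F\geq F$ to see that the modification changes nothing for $\lambda$-a.e.\ $u$. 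The paper's argument is shorter because the regularization in $y$ is delegated to a standard theorem; yours is self-contained and makes transparent exactly where the difficulty sits and why the fix is harmless. One cosmetic point: your aside that continuity from above gives $\mu_y(t)\downarrow\mu_{y_0}(t)$ as $y\downarrow y_0$ is not quite accurate --- the decreasing limit is $\lambda^p(\{g_{y_0}\geq t\})$, which can strictly exceed $\mu_{y_0}(t)$ --- but this only reinforces your point that right-continuity may fail, and nothing in your construction relies on the remark.
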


\begin{proof}
    For \(y\in \R\), denote by \(u\mapsto G_u(y)\) a decreasing rearrangement of \(\uu \mapsto P(Y\leq y \mid \XX=q_\XX(\uu))\). Since \(P(Y\leq y \mid \XX = q_\XX(\uu)) \to 0\) for all \(\uu\) as \(y\to - \infty\), also \(G_u(y) \to 0\) for all \(u\) as \(y\to -\infty\). Similarly, \(G_u(y)\to 1\) for all \(u\) as \(y\to +\infty\). For \(y\leq y'\), it is \(P(Y\leq y \mid \XX = q_\XX(\uu)) \leq P(Y\leq y'\mid \XX = q_\XX(\uu))\) for all \(\uu\). Hence, also the decreasing rearrangements satisfy \(G_u(y)\leq G_u(y')\) for all \(u\). Define \(H(u,y):= \int_0^u G_t(y) \de t\) for \((u,y)\in (0,1)\times \R\). It is straightforward to verify that \(H\) is a bivariate distribution function. 
    Then, by the regular version of conditional probability \cite[Theorem 6.3]{Kallenberg-2002}, there exists a family \((F_u)_{u\in [0,1]}\) of distribution functions such that
    \begin{align*}
        \int_0^u F_t(y) \de t = H(u,y) = \int_0^u G_t(y)\de t\quad \text{for all } (u,y)\in (0,1)\times \R.
    \end{align*}
    For fixed \(y\), it follows that \(F_t(y) = G_t(y)\) for \(\lambda\)-almost all \(t\in (0,1)\). This implies
    \begin{align*}
        \lambda(\{t \mid F_t(y)\geq w\}) = \lambda(\{t\mid G_t(y) \geq w\}) \quad \text{for all } w\in \R.
    \end{align*}
    Hence, also \(F_u\) is a decreasing rearrangement of \(\uu\mapsto P(Y\leq y\mid \XX=q_\XX(\uu))\). Since \(F_u\) is a distribution function for all \(u\), the statement is proven.
\end{proof}

\begin{proof}[Proof of Theorem \ref{lemtrafSI}.]
    To show statement \ref{lemtrafSI0}, we obtain for all \(y\in \R\) that
    \begin{align*}
        P\big( q_{Y;\XX}^{\uparrow U}(V) \leq y\big) &= \int_{[0,1]} P(F_u^{-1}(V)\leq y \mid U=u) \de \lambda(u) 
        = \int_{[0,1]} P(F_u^{-1}(V)\leq y ) \de \lambda(u) \\
        &= \int_{[0,1]} F_u(y) \de \lambda(u) 
        = \int_{[0,1]^p} P\big(Y\leq y \mid \XX = q_{\XX}(\uu)\big)  \de \lambda^p(\uu) \\
        &= \int_\Omega P(Y\leq y \mid \XX) \de P = P(Y\leq y),
    \end{align*}
    where the use for the first equality the definition of the rearranged quantile transform in \eqref{defyu}. For the second equality, we use independence of \(U\) and \(V\). The third equality follows from Proposition \ref{Prop.Inverse}\,\ref{Prop.Inverse1}, using the fact that \(F_u\) is a distribution function, and from the fact that \(V\) is uniform on \((0,1)\). The fourth equality holds by definition of \(F_u\) as a rearrangement of \(\uu \mapsto P(Y\leq y \mid \XX = q_{\XX}(\uu)) \). The fifth equality follows with \eqref{eqpropqt}, and the last equality is due to disintegration. \\
    To show statement \ref{lemtrafSI1}, recall that the family \(\{q_{Y;\XX}^{\uparrow u}(V)\}_{[0,1]}\) is SI in \(u\), that is, \(\E f(q_{Y;\XX}^{\uparrow u}(V)) = \E[f(q_{Y;\XX}^{\uparrow u}(V))\mid U=u] \) is increasing in \(u\) for all increasing functions \(f\) such that the expectations exist, where we use for the equality that \(U\) and \(V\) are independent. However, the latter means that \((q_{Y;\XX}^{\uparrow U}(V),U)\) is a bivariate SI random vector.\\
    To show statement \ref{lemtrafSI2}, let \(v\in (0,1)\). Then we obtain for all \(u\in [0,1]\) that
    \begin{align*}
        \int_0^u P\big( q_{Y;\XX}^{\uparrow U}(V)\leq q_{q_{Y;\XX}^{\uparrow U}}(v) \mid U = s\big) \de s &= \int_0^u P\big(q_{Y;\XX}^{\uparrow s}(V) \leq q_Y(v)\big) \de s \\
        &= \int_0^u F_s(q_Y(v)) \de s = \int_0^u \big(\eta_{Y|\XX}^{v}\big)^*(s) \de s,
    \end{align*}
    where we use for the first equality \(q_{Y;\XX}^{\uparrow U} \eqd Y\) and independence of \(U\) and \(V\). The second equality follows from \eqref{defyu} and Proposition \ref{Prop.Inverse}\,\ref{Prop.Inverse1}. The last equality is due to \eqref{definreacdf}. Since the integrand of the first integral is decreasing in \(s\), it coincides \(\lambda\)-almost surely with the decreasing rearrangement \((\eta_{Y|\XX}^v)^*\). Hence, the characterization of \(\preccurlyeq_{ccx}\) through the Schur order in Theorem \ref{thecharSchur} implies \((q_{Y;\XX}^{\uparrow U}(V),U) =_{ccx} (Y,\XX)\).
\end{proof}

We also use the following result, which gives several variants of the concordance order. 
Recall that \((Y,X)\leq_c (Y',X')\) is defined by \(P(Y\leq y, X\leq x)\leq P(Y'\leq y, X'\leq x)\) and \(P(Y > y, X > x) \leq P(Y' > y, X' > x)\) for all \(x,y\in \R\). 

\begin{proposition}[Concordance order]\label{propconcordequiv}
    Let \(X,Y,X',Y'\) be real-valued random variables. Then the following statements are equivalent:
    \begin{enumerate}[(i)]
        \item \label{propconcordequiv1} \((Y,X)\leq_c (Y',X')\),
        \item \label{propconcordequiv2} \(P(Y\leq y, X\leq x)\leq P(Y'\leq y, X'\leq x)\) and \(P(Y\geq y, X\geq x) \leq P(Y'\geq y, X'\geq x)\) for all \(x,y\in \R\).
        \item \label{propconcordequiv3}  \(Y\eqd Y'\), \(X\eqd X'\), and \(P(Y\leq y, X\leq x) \leq P(Y'\leq y, X'\leq x)\) for all \(x,y\in \R\).
        \item \label{propconcordequiv4} \(Y\eqd Y'\), \(X\eqd X'\),  and \(P(Y\geq y, X\geq x) \leq P(Y'\geq y, X'\geq x)\) for all \(x,y\in \R\).
    \end{enumerate}
\end{proposition}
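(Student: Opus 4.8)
The plan is to reduce the proposition to two elementary ingredients: the inclusion–exclusion identity linking the joint survival function, the joint distribution function, and the two marginals, together with a pair of monotone-limit relations that interchange the strict orthant conditions (written with \(>\)) and the non-strict ones (written with \(\ge\)). Throughout I abbreviate \(H(y,x):=P(Y\le y,X\le x)\), \(\bar H(y,x):=P(Y>y,X>x)\) and \(\bar G(y,x):=P(Y\ge y,X\ge x)\), with primed counterparts \(H',\bar H',\bar G'\) for \((Y',X')\). Unpacking Definition~\eqref{defconcordance}, statement \eqref{propconcordequiv1} says exactly \(H\le H'\) and \(\bar H\le\bar H'\) pointwise, while \eqref{propconcordequiv2} replaces the second condition by \(\bar G\le\bar G'\). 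The key identity, valid for every pair and all \(x,y\), is
\begin{align}\label{eqplanincl}
    \bar H(y,x) = 1 - F_Y(y) - F_X(x) + H(y,x).
\end{align}

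First I would show \eqref{propconcordequiv1}\(\Leftrightarrow\)\eqref{propconcordequiv3}. Assuming \eqref{propconcordequiv1}, letting \(x\to\infty\) in \(H\le H'\) yields \(F_Y\le F_{Y'}\) and letting \(y\to\infty\) yields \(F_X\le F_{X'}\); letting \(x\to-\infty\) in \(\bar H\le\bar H'\) yields \(1-F_Y\le 1-F_{Y'}\) and letting \(y\to-\infty\) yields \(1-F_X\le 1-F_{X'}\). Combining the four inequalities gives \(F_Y=F_{Y'}\) and \(F_X=F_{X'}\), i.e. \(Y\eqd Y'\) and \(X\eqd X'\), so \eqref{propconcordequiv3} holds. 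Conversely, under equal margins the difference of \eqref{eqplanincl} for the two vectors collapses to \(\bar H'-\bar H=H'-H\), so the lower-orthant inequality of \eqref{propconcordequiv3} forces the upper-orthant inequality \(\bar H\le\bar H'\), recovering \eqref{propconcordequiv1}. This same observation records the useful principle that, once margins agree, the lower- and upper-orthant inequalities are equivalent.

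Next I would treat the passage between the strict and non-strict upper-orthant conditions, which connects \eqref{propconcordequiv2} and \eqref{propconcordequiv4} to \eqref{propconcordequiv1}. Continuity of probability from above and below gives the set limits \(\{Y>y,X>x\}\downarrow\{Y\ge y_0,X\ge x_0\}\) as \((y,x)\uparrow(y_0,x_0)\) and \(\{Y\ge y,X\ge x\}\uparrow\{Y>y_0,X>x_0\}\) as \((y,x)\downarrow(y_0,x_0)\); hence \(\bar G\) and \(\bar H\) are monotone limits of one another, and therefore \(\bar H\le\bar H'\) holds everywhere if and only if \(\bar G\le\bar G'\) holds everywhere. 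Since the lower-orthant parts of \eqref{propconcordequiv1} and \eqref{propconcordequiv2} coincide, this yields \eqref{propconcordequiv1}\(\Leftrightarrow\)\eqref{propconcordequiv2} at once. For \eqref{propconcordequiv4} I would combine this limit equivalence with the margin/identity arguments above: \eqref{propconcordequiv1} gives equal margins and \(\bar H\le\bar H'\), hence \(\bar G\le\bar G'\), which is \eqref{propconcordequiv4}; conversely \eqref{propconcordequiv4} supplies equal margins and \(\bar G\le\bar G'\), the latter upgrading to \(\bar H\le\bar H'\) and, through \eqref{eqplanincl} under equal margins, to \(H\le H'\), which is \eqref{propconcordequiv1}.

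The only genuinely delicate point — and the one I would write out most carefully — is this interchange of \(>\) and \(\ge\) when the laws carry atoms, since \(\bar H\) and \(\bar G\) then differ precisely at the atom locations; the argument must proceed through the monotone set limits above rather than by a naive pointwise substitution. Everything else is bookkeeping with the identity \eqref{eqplanincl} and with marginal limits, so I expect no further obstruction.
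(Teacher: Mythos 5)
Your proposal is correct and follows essentially the same route as the paper's proof: marginal limits to identify $F_Y=F_{Y'}$ and $F_X=F_{X'}$, the inclusion–exclusion identity to pass between lower- and upper-orthant inequalities under equal margins, and monotone set limits (continuity of measures) to interchange the strict and non-strict orthant conditions. The delicate point you flag about atoms is exactly the reason the paper also argues through monotone approximation rather than pointwise substitution, so no gap remains.
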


\begin{proof} The proof follows from standard arguments like continuity of measures and the inclusion-exclusion principle.
\end{proof}

We also need the following variants of the pointwise order of distribution functions under the marginal constraint.

\begin{lemma}\label{lemconcordlfs}
Let \(Y,Y'\) be real-valued random variables, and let \(U\) be uniform on \((0,1).\)
    Under the marginal constraint \(\overline{\Ran(F_Y)} = \overline{\Ran(F_{Y'})}\), the following statements are equivalent:
    \begin{enumerate}[(i)]
        \item \label{lemconcordlfs1} \(P(Y\leq q_Y(v),U\leq u) \leq P(Y'\leq q_{Y'}(v),U\leq u)\) for \(\lambda\)-almost all \(v\in (0,1)\) and for all \(u\in (0,1),\)
        \item \label{lemconcordlfs2} \(P(F_Y(Y)\geq v, U\geq u) \leq P(F_{Y'}(Y')\geq v, U\geq u)\) for all \((u,v)\in (0,1)^2\).
    \end{enumerate}
\end{lemma}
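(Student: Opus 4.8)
The plan is to move both conditions onto the single bivariate vector $(F_Y(Y),U)$, exploiting that the marginal constraint makes $(F_Y(Y),U)$ and $(F_{Y'}(Y'),U)$ share their marginals. First I would record the identity $\{F_Y(Y)\ge v\}=\{Y\ge q_Y(v)\}$, which is immediate from Proposition~\ref{Prop.Inverse}\eqref{Prop.Inverse1}; thus \eqref{lemconcordlfs2} is exactly the upper-orthant comparison of $(F_Y(Y),U)$ and $(F_{Y'}(Y'),U)$ on $(0,1)^2$ (and trivially off $(0,1)^2$, as both coordinates lie in $[0,1]$). Because $P(F_Y(Y)\ge v)=\overline F_Y(q_Y(v))$, Lemma~\ref{lemmargconstraint}\eqref{lemmargconstraint2} yields $F_Y(Y)\eqd F_{Y'}(Y')$, so the two vectors have identical one-dimensional marginals. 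Proposition~\ref{propconcordequiv} (equivalence of \eqref{propconcordequiv3} and \eqref{propconcordequiv4}) then lets me swap the upper-orthant comparison for the lower-orthant one, turning \eqref{lemconcordlfs2} into
\begin{equation*}
P(F_Y(Y)\le v,\,U\le u)\le P(F_{Y'}(Y')\le v,\,U\le u)\qquad\text{for all }(u,v)\in(0,1)^2.
\tag{$\star$}
\end{equation*}

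It remains to identify $(\star)$ with \eqref{lemconcordlfs1}. Setting $\Lambda_u(t):=P(F_Y(Y)\le t,\,U\le u)$ and $\Lambda'_u(t):=P(F_{Y'}(Y')\le t,\,U\le u)$, the bridge is the mod-null identity $\{Y\le q_Y(v)\}=\{F_Y(Y)\le\iota_{F_Y}(v)\}$: the symmetric difference is contained in $\{Y>q_Y(v),\,F_Y(Y)=\iota_{F_Y}(v)\}$, a flat stretch of $F_Y$ that carries no mass. This gives $P(Y\le q_Y(v),U\le u)=\Lambda_u(\iota_{F_Y}(v))$, and likewise for the primed quantities. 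Since $\iota_{F_Y}=\iota_{F_{Y'}}$ $\lambda$-a.e.\ by Lemma~\ref{lemmargconstraint}\eqref{lemmargconstraint3}, condition \eqref{lemconcordlfs1} becomes $\Lambda_u(\iota_{F_Y}(v))\le\Lambda'_u(\iota_{F_Y}(v))$ for $\lambda$-almost all $v$. The implication $(\star)\Rightarrow\eqref{lemconcordlfs1}$ is then immediate, by evaluating $(\star)$ at $t=\iota_{F_Y}(v)$. For the converse I would use that $\Lambda_u$ and $\Lambda'_u$ are right-continuous, so it suffices to prove $\Lambda_u\le\Lambda'_u$ $\lambda$-a.e.\ and extend to all $t$ by right-continuity; at every continuity point $v$ of $\iota_{F_Y}$ one has $\iota_{F_Y}(v)=v$, and there the inequality is precisely \eqref{lemconcordlfs1}.

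The main obstacle is the bookkeeping at the atoms of $Y$ and $Y'$, which is exactly what forces the asymmetric quantifiers (``for all $v$'' in \eqref{lemconcordlfs2}, governed by the left-continuous $\iota^-_{F_Y}$, versus ``$\lambda$-almost all $v$'' in \eqref{lemconcordlfs1}, governed by the right-continuous $\iota_{F_Y}$). On each open interval $(\alpha,\beta)$ where $\iota_{F_Y}(v)>v$ — the ``gap'' below an atom — the function $v\mapsto P(Y\le q_Y(v),U\le u)$ is constant and equal to the \emph{post-jump} value $\Lambda_u(\beta)$, whereas $\Lambda_u$ itself is constant on $(\alpha,\beta)$ at the \emph{pre-jump} value $\Lambda_u(\alpha)$ and jumps only at $\beta$. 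To finish the converse I would verify $\Lambda_u\le\Lambda'_u$ on these gaps, where both functions equal their common left-endpoint value: that value is controlled either by \eqref{lemconcordlfs1} on the preceding gap (when $\alpha$ is the top of an adjacent atom) or, when $\alpha$ is approached continuously so that $\Lambda_u$ has no jump at $\alpha$, by taking left-limits of \eqref{lemconcordlfs1} along continuity points; Lemma~\ref{lemmargconstraint} guarantees that the gaps of $Y$ and of $Y'$ coincide, so the two constants are compared at the same point. Carrying out this atom-chase carefully — rather than the orthant-flip, which is routine — is the delicate part of the argument.
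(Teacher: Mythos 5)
Your proposal is correct and, underneath the repackaging, runs on the same two engines as the paper's own proof: the orthant flip made legitimate by the equal marginals \(F_Y(Y)\eqd F_{Y'}(Y')\) (you invoke Proposition \ref{propconcordequiv}, while the paper re-derives the inclusion--exclusion step inline), and a one-sided limit argument to pass between the \(\lambda\)-a.e.\ statement \eqref{lemconcordlfs1} and the everywhere statement \eqref{lemconcordlfs2} across the atoms of \(Y\). What you do differently is to push everything onto the single pair \((F_Y(Y),U)\) versus \((F_{Y'}(Y'),U)\) and encode the atom structure in the map \(\iota_{F_Y}\), so that \eqref{lemconcordlfs1} becomes \(\Lambda_u\circ\iota_{F_Y}\le \Lambda'_u\circ\iota_{F_Y}\) a.e.\ for your \(\Lambda_u(t)=P(F_Y(Y)\le t,\,U\le u)\); the paper instead works directly with the events \(\{Y\le q_Y(v)\}\) and approximates \(q_Y(v_n)\to q_Y(v)\) through a case distinction on \(v\in\mathrm{int}(\overline{\Ran(F_Y)})\) versus \(v\notin\overline{\Ran(F_Y)}\). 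Your organization is arguably cleaner, since the whole difficulty is isolated in the single question of whether \(\Lambda_u\le\Lambda'_u\) holds at the left endpoints of the gaps of \(\overline{\Ran(F_Y)}\), and your two cases there (adjacent atom versus continuous approach) are exactly right.

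One sentence in your second paragraph is false as stated: ``at every continuity point \(v\) of \(\iota_{F_Y}\) one has \(\iota_{F_Y}(v)=v\).'' On the interior of a gap \((\alpha,\beta)\) the map \(\iota_{F_Y}\) is constant equal to \(\beta\), hence continuous there but \(\ne v\), and the gaps have total length equal to the total mass of the atoms of \(Y\), so they cannot be absorbed into the a.e.\,/\,right-continuity reduction. Your third paragraph identifies and repairs precisely this case, so this is a slip rather than a gap; the quick argument should be restricted to \(v\in\Ran(F_Y)\), where \(\iota_{F_Y}(v)=v\) genuinely holds, with the at most countable set \(\overline{\Ran(F_Y)}\setminus\Ran(F_Y)\) swept up afterwards by right-continuity. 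Two further small points for the write-up: in the left-limit step at a gap endpoint \(\alpha\) that is not the top of an adjacent atom, you need not insist that the approximating points satisfy \(\iota_{F_Y}(v_n)=v_n\) (such points may form a null set near \(\alpha\)); it suffices to take any \(v_n\uparrow\alpha\) in the a.e.-set, note that \(\iota_{F_Y}(v_n)\) and \(\iota_{F_{Y'}}(v_n)\) both lie in \([v_n,\alpha]\) because the gaps of the two ranges coincide, and use that \(\Lambda_u\) and \(\Lambda'_u\) have no jump at \(\alpha\). And the extension of \eqref{lemconcordlfs2} from \((0,1)^2\) to all of \(\R^2\) at \(v=1\) is not entirely trivial when \(F_Y(Y)\) has an atom at \(1\), but a one-line continuity-of-measures argument closes it.
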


\begin{proof}
    Assume \ref{lemconcordlfs1}. Denote by \(N_1\subset [0,1]\) 
    the exceptional \(\lambda\)-null set such that 
        \(P(Y\leq q_Y(v),U\leq u) \leq P(Y'\leq q_{Y'}(v),U\leq u)\)
     for all \(v\in (0,1)\setminus N_1\) and \(u\in (0,1)\). Noting that the marginal constraint implies \(P(Y\leq q_Y(v)) = P(Y'\leq q_{Y'}(v))\) for all \(v\in [0,1]\) outside a \(\lambda\)-null set \(N_2\) (see Lemma \ref{lemmargconstraint}), we obtain from the inclusion-exclusion principle that
    \begin{align}\label{prooflemconcordlfs1}
        P(Y > q_Y(v),U > u) \leq P(Y' > q_{Y'}(v),U > u)
    \end{align}
    for all \(v\in (0,1)\setminus N\) and for all \(u\in (0,1)\), where \(N:= N_1\cup N_2\). We aim show to that
    \begin{align}\label{prooflemconcordlfs2}
        P(Y \geq q_Y(v),U \geq u) \leq P(Y' \geq q_{Y'}(v),U \geq u)
    \end{align}
    for \(\lambda\)-almost all \(v\in (0,1)\) and for all \(u\in (0,1)\). Then, using the identities \(\{Y\geq q_Y(v)\} = \{F_Y(Y)\geq v\}\) and \(\{Y'\geq q_{Y'}(v)\} = \{F_{Y'}(Y')\geq v\}\) (see Proposition \ref{Prop.Inverse}\,\ref{Prop.Inverse1}), we obtain
    \begin{align}\label{prooflemconcordlfs3}
        P(F_Y(Y)\geq v, U\geq u) \leq P(F_{Y'}(Y')\geq v, U\geq u)
    \end{align}
    for \(\lambda\)-almost all \(v\in (0,1)\) and for all \(u\in (0,1)\). Finally, continuity of measures from above yields statement \ref{lemconcordlfs2}. To prove \eqref{prooflemconcordlfs2}, we consider two cases. For \(v\in \mathrm{int}(\overline{\Ran(F_Y)})\setminus N\), choose \(v_n\in \mathrm{int}(\overline{\Ran(F_Y)})\setminus N\) with \(v_n\uparrow v\). Then we have \(q_Y(v_n)\uparrow q_Y(v)\) and, using the marginal constraint, we also obtain \(q_{Y'}(v_n)\uparrow q_{Y'}(v)\). Hence, by continuity of measures from above, we obtain with \(u_n\uparrow u\) that 
    \begin{align}\label{prooflemconcordlfs4}
    \begin{split}
        P(Y \geq q_Y(v),U \geq u) 
        &= \lim_{n\to \infty} P(Y > q_Y(v_n),U > u_n) \\
        & \leq \lim_{n\to \infty} P(Y' > q_{Y'}(v_n),U > u_n) 
         = P(Y' \geq q_{Y'}(v),U' \geq u),
    \end{split}
    \end{align}
    where the inequality follows from \eqref{prooflemconcordlfs1}.
    For the second case, let \(v\in (0,1)\setminus (\overline{\Ran(F_Y)})\cup N)\) and define \(v_0:= F_Y^-(q_Y(v))\). Note that, by the marginal constraint and by left-continuity of the transformation \(\iota_{F_{Y}}^-=F_Y^-\circ F_Y^{-1}\) in Proposition \ref{Prop.Inverse}\,\ref{Prop.Inverse3}, we also have \(v_0 = F_{Y'}^-(q_{Y'}(v))\). Now, choose \(v_n \in [0,1]\setminus N\) with \(v_n\uparrow v_0\). Then 
    \begin{align}
        \{Y\geq q_Y(v)\} = \{Y\geq q_Y(v_0)\} = \bigcap_{n\in \N} \{Y > q_Y(v_n)\} \cap \{Y\notin [q_Y(v_0),q_Y(v))\}.
    \end{align}
    Using that \(\{Y\in [q_Y(v_0),q_Y(v))\}\) is a \(P\)-null set and using a similar reasoning for \(\{Y'\geq q_{Y'}(v)\}\), we obtain \eqref{prooflemconcordlfs4} also in the second case. This proves \eqref{prooflemconcordlfs2} and thus statement \ref{lemconcordlfs2}.

    For the reverse direction, assume that \eqref{prooflemconcordlfs3} holds for all \((u,v)\in (0,1
    )^2\). Using \(\{Y\geq q_Y(v)\} = \{F_Y(Y)\geq v\}\) and \(\{Y'\geq q_{Y'}(v)\} = \{F_{Y'}(Y')\geq v\}\), we obtain \eqref{prooflemconcordlfs2} for all \((u,v)\in (0,1)^2\). We aim to show \eqref{prooflemconcordlfs1} for \(\lambda\)-almost all \(v\in (0,1)\) and for all \(u\in [0,1]\), which implies statement \ref{lemconcordlfs1}. 
    To this end, consider first the case where \(v\in \mathrm{int}(\overline{\Ran(F_Y)})\). Choose \(v_n \downarrow v\) and \(u_n\downarrow u\). Then we have \(\bigcap_{n\in \N} \{Y\geq q_Y(v_n)\} = \{Y> q_Y(v) \} \setminus \{Y\in (q_Y(v),q_Y^+(v)]\}\) for \(q_Y^+(v) = \sup\{y\mid F_Y(y)\leq v\}\). Since \(\{Y\in (q_Y(v),q_Y^+(v)]\}\) is a \(P\)-null set and due to a similar reasoning for \(Y'\) (using the marginal constraint), we obtain \eqref{prooflemconcordlfs1}. 
    For the second case, let \(v\in (0,1)\setminus \overline{\Ran(F_Y)}\) and define \(v_0 := F_Y(q_Y(v)) = F_{Y'}(q_{Y'}(v))\). Then we have \(\bigcap_{n\in \N} \{Y\geq q_Y(v_n)\} = \{Y>q_Y(v_0)\} \setminus N_{v_0}\) for some \(Y\)-null set \(N_{v_0}\). Using a similar reasoning for an approximation of \(\{Y'>q_{Y'}(v)\}\) from above, we obtain \eqref{prooflemconcordlfs1} from \eqref{prooflemconcordlfs2}.
\end{proof}

The following lemma addresses distributional properties of quantile-based constructions.

\begin{lemma}\label{lemcondcomrv}
    Let \((W,\XX)\) be a \((1+p)\)-dimensional random vector. Let \(V\) be uniform on \((0,1)\) and independent from \(\XX\). Then we have
    \begin{enumerate}[(i)]
        \item \label{lemcondcomrv1} \(F_{W|\XX}^{-1}(V) \eqd W\),
        \item \label{lemcondcomrv2} \(P(F_{W|\XX}^{-1}(V) \leq y, V\leq y') = \int_{\R^p} \min\{F_{W|\XX=\xx}(y) , y'\} \de P^\XX(\xx)\) for all \(y,y' \in \R\).
    \end{enumerate}
\end{lemma}

\begin{proof}
The first statement is a special case of \eqref{eqpropqt}. For the second statement, we obtain from disintegration
\begin{align*}
    P\big(F_{W|\XX}^{-1}(V) \leq y, V\leq y'\big) 
    &= \int_{\R^p} P\big( F_{W|\XX = \xx}^{-1}(V) \leq y, V\leq y'  ) \de P^\XX(\xx) \\
    &= \int_{\R^p} \min\{F_{W|\XX=\xx}(y) , y'\} \de P^\XX(\xx),
\end{align*}
where we use independence of \(\XX\) and \(V\) for the first equality. The second equality follows with the transformation in Proposition \ref{Prop.Inverse}\,\ref{Prop.Inverse1} and the fact that \(V\) is uniform on \((0,1)\).
\end{proof}

We are now able to prove Theorem \ref{propcharS}.

\begin{proof}[Proof of Theorem \ref{propcharS}.] 
First, recall that a comparison in the concordance order implies identical marginal distributions; see Proposition \ref{propconcordequiv}. Thus \ref{propcharS3} and \ref{propcharS2} imply
\begin{align}\label{proofpropcharS10}
F_Y \circ F_{Y|\XX}^{-1}(V) &\eqd F_{Y'} \circ F_{Y'|\XX'}^{-1}(V) \quad \text{and} \\
    \label{proofpropcharS1} F_Y \circ q_{Y;\XX}^{\uparrow U}(V) &\eqd F_{Y'} \circ q_{Y';\XX'}^{\uparrow U}(V)\,,
\end{align}
respectively.
Second, \((Y,\XX)\preccurlyeq_{ccx} (Y',\XX')\) implies the marginal constraint \(\overline{\Ran(F_Y)} = \overline{\Ran(F_{Y'})}\) which is equivalent to
\begin{align}\label{proofpropcharS2}
    F_Y \circ q_Y(v) = F_{Y'}\circ q_{Y'}(v) \quad \text{for } \lambda\text{-almost all } v \in (0,1);
\end{align}
see Lemma \ref{lemmargconstraint}.
Since \(Y\eqd q_{Y;\XX}^{\uparrow U}(V)\) and \(Y'\eqd q_{Y';\XX'}^{\uparrow U}(V)\) due to Theorem \ref{lemtrafSI}\,\ref{lemtrafSI0} 
and \(F_{Y|\XX}^{-1}(V)\eqd Y\) and \(F_{Y'|\XX'}^{-1}(V)\eqd Y'\) due to Lemma \ref{lemcondcomrv}, we obtain that \eqref{proofpropcharS10}, \eqref{proofpropcharS1} and \eqref{proofpropcharS2} are equivalent.

To show '\ref{propcharS1} \(\Longrightarrow\) \ref{propcharS3}', assume that \((Y,\XX)\preccurlyeq_{ccx} (Y',\XX')\). Then, by Proposition \ref{propversccx}, there exists a Borel set \(A\subset (0,1)\) with \(\lambda(A)=1\) such that \(P(Y\leq q_Y(v)\mid \XX) \leq_{cx} P(Y'\leq q_{Y'}(v) \mid \XX')\) for all \(v\in A\). Since \(x \mapsto -\min\{x,\alpha\}\) is convex for all \(\alpha\in \R\),  we obtain for \(v \in A\) and \(v' \in (0,1)\) that
\begin{align}
    \nonumber P(F_{Y|\XX}^{-1}(V) \leq q_Y(v) , V\leq v') &= \int_{\R^p} \min\{F_{Y|\XX=\xx}(q_Y(v)), v'\} \de P^{\XX}(\xx) \\
    &\geq \int_{\R^{p'}} \min\{F_{Y'|\XX'=\xx}(q_Y(v)), v'\} \de P^{\XX'}(\xx) \\
    &= P(F_{Y'|\XX'}^{-1}(V) \leq q_Y(v), V\leq v'),
\end{align}
where the equalities are due to Lemma \ref{lemcondcomrv}\,\ref{lemcondcomrv2}. Note that \(F_{Y|\XX}^{-1}(V)\eqd Y\); see Lemma \ref{lemcondcomrv}\,\ref{lemcondcomrv1}. Then, applying Lemma \ref{lemconcordlfs} and Proposition \ref{propconcordequiv}, yields \((F_Y\circ F_{Y|\XX}^{-1}(V),V) \geq_c (F_Y\circ F_{Y'|\XX'}^{-1}(V),V)\).

To show '\ref{propcharS3} \(\Longrightarrow\) \ref{propcharS1}', assume \((F_Y\circ F_{Y|\XX}^{-1}(V),V) \geq_c (F_Y\circ F_{Y'|\XX'}^{-1}(V),V)\). This implies by \eqref{proofpropcharS10} and \eqref{proofpropcharS2} the marginal constraint \(\overline{\Ran(F_Y)} = \overline{\Ran(F_{Y'})}\), and by Proposition \ref{propconcordequiv} and Lemma \ref{lemconcordlfs} that \(P(F_{Y|\XX}^{-1}(V) \leq q_Y(v) , V\leq t) \geq P(F_{Y'|\XX'}^{-1}(V) \leq q_{Y'}(v) , V\leq t)\) for all \(t\in \R\) and \(\lambda\)-almost all \(v\in (0,1)\). 
Consider the stop-loss function \(x\mapsto (x-t)_+ = -\min\{x,t\} + x\). Then, for \(\lambda\)-almost all \(v\in (0,1)\), we obtain
\begin{align*}
    \E \left(P(Y\leq q_Y(v) \mid \XX) -t\right)_+ 
    &= - \E \min\{F_{Y|\XX}(q_Y(v)), t\} + \E(P(Y\leq q_Y(v) \mid \XX)) \\
    &=  - P(F_{Y|\XX}^{-1}(V)\leq q_Y(v) , V\leq t) + P(Y\leq q_Y(v)) \\
    &\leq  - P(F_{Y'|\XX'}^{-1}(V)\leq q_{Y'}(v) , V\leq t) + P(Y'\leq q_{Y'}(v)) \\
    &= \cdots = \E \left(P(Y'\leq q_{Y'}(v) \mid \XX') -t\right)_+
\end{align*}
for all \(t\in \R\), where the second equality is due to Lemma \ref{lemcondcomrv} and the inequality involves the marginal constraint.
Using the marginal constraint, we obtain from the characterization of the convex order by the stop-loss order (see \cite[Theorem 1.5.3 and Theorem 1.5.7]{Muller-2002}) that \(P(Y\leq q_Y(v) \mid \XX) \leq_{cx} P(Y'\leq q_{Y'}(v) \mid \XX')\) for \(\lambda\)-almost all \(v\in (0,1)\).

To show '\ref{propcharS1} \(\Longrightarrow\) \ref{propcharS2}', assume that \((Y,\XX)\preccurlyeq_{ccx} (Y',\XX')\,.\) 
    Then we obtain
    \begin{align}\label{proofpropcharS4}
    \begin{split}
        P(q_{Y;\XX}^{\uparrow U}(V) \leq q_Y(v),U\leq u)
        &= \int_0^u P\left(q^{\uparrow U}_{Y;\XX}(V)\leq q_Y(v) \mid U=s\right) \de s \\
        &= \int_0^u P\left(q^{\uparrow s}_{Y;\XX}(V)\leq q_Y(v) \right) \de s \\
        &= \int_0^u P\left(V\leq F_s(q_Y(v)) \right) \de s 
         = \int_0^u (\eta_{Y|\XX}^v)^*(s) \de s \\
        & \leq  \int_0^u (\eta_{Y'|\XX'}^v)^*(s) \de s \\
        &= \cdots  
        = P(q_{Y';\XX'}^{\uparrow U}(V) \leq q_{Y'}(v),U\leq u)
\end{split}
    \end{align}
    for \(\lambda\)-almost all \(v\in (0,1)\) and for all \(u\in [0,1]\) with equality for \(u=1,\)
    where we use for the inequality the characterization of \(\preccurlyeq_{ccx}\) by the Schur order; see Theorem \ref{thecharSchur}. The first equality is due to the disintegration theorem, the second equality holds by independence of \(U\) and \(V.\) For the third equality, we apply Proposition \ref{Prop.Inverse}\,\ref{Prop.Inverse1} using that \(F_s\) is a distribution function with associated quantile function \(q_{Y;\XX}^{\uparrow s}\); see \eqref{defyu}. The fourth equality follows with \eqref{definreacdf} noting that \(V\) is uniform on \((0,1)\).\\
    Using again \(q_{Y;\XX}^{\uparrow U}(V) \eqd Y\) and \(q_{Y';\XX'}^{\uparrow U}(V) \eqd Y'\) and using the marginal constraint, we apply Lemma \ref{lemconcordlfs} to prove that \eqref{proofpropcharS4} implies 
    \begin{align}\label{proofpropcharS48}
        P\big(F_Y(q_{Y;\XX}^{\uparrow U}(V))\geq v, U\geq u\big) \leq P\big(F_{Y'}(q_{Y';\XX'}^{\uparrow U}(V))\geq v, U\geq u\big)
    \end{align}
    for all \((u,v)\in (0,1)^2\). Now, statement \ref{propcharS2} follows from \eqref{proofpropcharS48} by Proposition \ref{propconcordequiv} where we use the identity in \eqref{proofpropcharS1}. 

    To prove '\ref{propcharS2} \(\Longrightarrow\) \ref{propcharS1}', assume that \((F_Y \circ q_{Y;\XX}^{\uparrow U}(V),U)\leq_{c} (F_{Y'} \circ q_{Y';\XX'}^{\uparrow U}(V),U)\). Applying Proposition \ref{propconcordequiv}, we obtain \eqref{proofpropcharS1} and the inequality \eqref{proofpropcharS48} for all \((u,v)\in [0,1]^2\).
    Using that \eqref{proofpropcharS1} is equivalent to \eqref{proofpropcharS2} which implies the marginal constraint, we obtain from \eqref{proofpropcharS48} by Lemma \ref{lemconcordlfs} that
    \begin{align}
        P\big(q_{Y;\XX}^{\uparrow U}(V) \leq q_Y(v),U\leq u\big) \leq P\big(q_{Y';\XX'}^{\uparrow U}(V) \leq q_{Y'}(v),U\leq u\big)
    \end{align}
    for \(\lambda\)-almost all \(v\in (0,1)\) and for all \(u\in (0,1)\), where \(U\) and \(V\) are independent and uniform on \((0,1)\). By a similar reasoning as in \eqref{proofpropcharS4}, it follows that
    \begin{align*}
         \int_0^u (\eta_{Y|\XX}^v)^*(s) \de s &= P\big(q_{Y;\XX}^{\uparrow U}(V) \leq q_Y(v),U\leq u\big) \\
         & \leq P\big(q_{Y';\XX'}^{\uparrow U}(V) \leq q_{Y'}(v),U\leq u\big) = \int_0^u (\eta_{Y'|\XX'}^v)^*(s) \de s 
    \end{align*}
    for \(\lambda\)-almost all \(v\in (0,1)\) and for all \(u\in (0,1)\).
    Due to the characterization of the conditional convex order by the Schur order in Theorem \ref{thecharSchur}, we obtain \((Y,\XX)\preccurlyeq_{ccx} (Y',\XX')\).
\end{proof}

\subsection{Proof of Theorem \ref{theadderrmod}}

The proof of Theorem \ref{theadderrmod} is based on Theorem \ref{thecharSchur}, which we have already proven, and on Theorem \ref{propmvSchur}, which we prove now. 

\begin{proof}[Proof of Theorem \ref{propmvSchur}.]
    \ref{propmvSchur1}: Due to sufficiency of \(g\) for \(Y|\XX\) and of \(h\) for \(Y'|\XX'\,,\) we obtain from self-equitability of \(\preccurlyeq_{ccx}\) (see Remark \ref{remSchurOrder}\,\ref{remSchurOrder4}) that \((Y,\XX) =_{ccx} (Y,g(\XX)) \preccurlyeq_{ccx} (Y',h(\XX')) =_{ccx} (Y',\XX')\,.\)\\
    \ref{propmvSchur2}: Since \(Y\uparrow_{st} g(\XX)\) and \(Y'\uparrow_{st} h(\XX')\,,\) the (on \(\Ran(F_Y)\times \Ran(F_{g(\XX)})\) resp. \(\Ran(F_{Y'})\times \Ran(F_{h(\XX)})\) uniquely determined) copulas \(C_{Y,g(\XX)}\) and  \(C_{Y',h(\XX')}\) can chosen to be SI on \([0,1]^2\). Hence, by Theorem \ref{propcharS}, \(C_{Y,g(\XX)}\leq_{c} C_{Y',h(\XX')}\) and \(\overline{\Ran(F_Y)} = \overline{\Ran(F_{Y'}})\) imply \((Y, g(\XX))\preccurlyeq_{ccx} (Y', h(\XX'))\,.\) Then, the statement follows from part \ref{propmvSchur1}.\\
    Statement \ref{propmvSchur3} follows from \ref{propmvSchur2} replacing \(g(\XX)\) and \(h(\XX')\) by \(-g(\XX)\) and \(-h(\XX')\,.\)
\end{proof}

For the proof of Theorem \ref{theadderrmod}, we also need the following simple lemma, whose proof is straightforward.

\begin{lemma}[Sign change criterion]\label{lemsignchange}
    Let \(f,g\colon (0,1)\to \R\) be decreasing, Lebesgue-integrable functions with \(\int f \de \lambda = \int g \de \lambda\,.\) Assume that there exists \(t_*\in (0,1)\) such that \(\{f > g\} \subseteq [0,t_*] \subseteq \{f\geq g\}\,.\) Then we have \(\int_0^x g(t) \de t \leq \int_0^x f(t) \de t\) for all \(x\in (0,1)\,.\)
\end{lemma}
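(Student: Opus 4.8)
The plan is to reduce everything to the single-sign-change structure of the difference $h := f - g$ and then integrate. First I would read off what the set inclusions say pointwise. Since $[0,t_*] \subseteq \{f \geq g\}$, we have $h(t) = f(t) - g(t) \geq 0$ for every $t \in [0,t_*]$; and since $\{f > g\} \subseteq [0,t_*]$, any $t \in (t_*,1)$ lies outside $\{f > g\}$, whence $f(t) \leq g(t)$, i.e.\ $h(t) \leq 0$. Thus $h$ is nonnegative on $[0,t_*]$ and nonpositive on $(t_*,1)$, and the equal-integral assumption yields $\int_0^1 h \de t = 0$.

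Next I would introduce the primitive $H(x) := \int_0^x h(t)\de t = \int_0^x f(t)\de t - \int_0^x g(t)\de t$ and show $H(x) \geq 0$ for all $x \in (0,1)$, which is exactly the claimed inequality. I would split into two cases according to the position of $x$ relative to $t_*$. For $x \leq t_*$ the integrand $h$ is nonnegative on the whole interval of integration, so $H(x) \geq 0$ directly. For $x > t_*$ I would instead write $H(x) = \int_0^1 h \de t - \int_x^1 h(t)\de t = -\int_x^1 h(t)\de t$, using the equal-integral hypothesis; since $h \leq 0$ on $(t_*,1) \supseteq (x,1)$, the tail integral is nonpositive, giving $H(x) \geq 0$ again.

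The argument involves no genuine obstacle. Decreasingness of $f$ and $g$ is not actually needed for the lemma itself—it serves, in the applications, only to guarantee the existence of a single crossing point $t_*$—so the proof rests entirely on the sign pattern of $h$ together with the common-integral normalization. The only mild care required is to interpret the inclusions up to $\lambda$-null sets, so that the pointwise sign statements hold almost everywhere; this is harmless under integration. Integrability of $f$ and $g$ ensures that all integrals above are finite and that $H$ is well defined, completing the verification.
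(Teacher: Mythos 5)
Your proof is correct and is precisely the straightforward argument the paper has in mind (the paper explicitly omits the proof, calling it "straightforward"): the inclusions force $h=f-g$ to be nonnegative on $[0,t_*]$ and nonpositive on $(t_*,1)$, and combined with $\int_0^1 h\,\mathrm{d}\lambda=0$ the primitive $H(x)=\int_0^x h\,\mathrm{d}t$ is nonnegative by the two-case split you give. Your side remark that monotonicity of $f$ and $g$ is not needed for the lemma itself, but only serves in the application (Theorem \ref{theadderrmod}) to produce the single crossing point, is also accurate.
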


\begin{proof}[Proof of Theorem \ref{theadderrmod}:] 
For \(\sigma = 0\,,\) the statement is trivial. Hence, assume \( \sigma > 0\,.\)
    For \(Z:= f(\XX)\) and for \(y,y'\in \R\,,\) we have
    \begin{align}
        P(Y \leq y \mid Z = q_Z(t)) = P( Z + \sigma \varepsilon \leq y \mid Z = q_Z(t)) = F_\varepsilon(\tfrac{y-q_Z(t)}{\sigma})\,,
    \end{align}
    where we use for the second equality independence of \(\varepsilon\) and \(f(\XX)\,.\) Similarly, it holds that \(P(Y'\leq y' \mid Z = q_Z(t)) = F_\varepsilon(\tfrac{y'-q_Z(t)}{\sigma'})\,.\) It follows with Proposition \ref{Prop.Inverse} \eqref{Prop.Inverse1} that
    \begin{align}
    \label{eqtheadderrmod4} 
    & & t &\leq F_Z(\tfrac{\sigma y'-\sigma'y}{\sigma-\sigma'}) \\
    &\Longleftrightarrow & q_Z(t) &\leq \frac{\sigma y'-\sigma'y}{\sigma-\sigma'} \\
    &\Longleftrightarrow & \frac{y-q_Z(t)}{\sigma} &\geq \frac{y'-q_Z(t)}{\sigma'} \\
        & \Longrightarrow &F_\varepsilon(\tfrac{y-q_Z(t)}{\sigma})&\geq F_\varepsilon(\tfrac{y'-q_Z(t)}{\sigma'}),
    \end{align}
    where we use \(\sigma < \sigma'\) for the second equivalence.
    Similarly, we have that \(t > F_Z(\tfrac{\sigma y'-\sigma'y}{\sigma-\sigma'})\) implies \(F_\varepsilon(\tfrac{y-q_Z(t)}{\sigma})\leq F_\varepsilon(\tfrac{y'-q_Z(t)}{\sigma'}).\)
    Using the marginal constraint \(\overline{\Ran(F_Y)} = \overline{\Ran(F_{Y'})},\) we obtain \(P(Y\leq q_Y(v)) = P(Y'\leq q_{Y'}(v))\) for \(\lambda\)-almost all \(v\in (0,1)\,.\) Hence, since \(F_\varepsilon(\tfrac{y-q_Z(t)}{\sigma})\) and \(F_\varepsilon(\tfrac{y'-q_Z(t)}{\sigma'})\) are decreasing in \(t\,,\) it follows with Lemma \ref{lemsignchange} for
    \(\lambda\)-almost all \(v\in (0,1)\) with \(y:= q_Y(v)\) and \(y':=q_{Y'}(v)\) that
    \begin{align}
        \int_0^x P(Y\leq q_Y(v) \mid Z = q_Z(t)) \de t \geq \int_0^x P(Y'\leq q_{Y'}(v) \mid Z = q_Z(t)) \de t \quad \text{for all } x\in [0,1]
    \end{align}
    with equality for \(x=1\,.\)
    Using Theorem \ref{thecharSchur}, it follows that \((Y,f(\XX)) \preccurlyeq_{ccx} (Y',f(\XX))\,.\) Since \(Y\mid f(\XX)\) is sufficient for \(Y\mid \XX\) and, similarly, \(Y'\mid f(\XX)\) is sufficient for \(Y'\mid \XX\), we obtain from Theorem \ref{propmvSchur}\,\ref{propmvSchur1} that \((Y,\XX)\preccurlyeq_{ccx} (Y',\XX')\).
\end{proof}

\subsection{Remaining proofs of Section \ref{secsuffsch}}

\begin{proof}[Proof of Proposition \ref{Prop.Bernoulli}.]
The equivalence of \ref{Prop.Bernoulli1} and \ref{Prop.Bernoulli2} is explained in detail in Example \ref{Ex.Bernoulli} and Figure \ref{figExBernoulli}. The condition \(q = q'\) thereby equals the marginal constraint.
\\
\ref{Prop.Bernoulli3}: \(X\) and \(Y\) are independent if and only if \(p_{00} = (1-p)(1-q)\) if and only if \(p_{01} = p(1-q)\) if and only if \(\alpha=\beta\).
\\
\ref{Prop.Bernoulli4}: \(Y\) perfectly depends on \(X\) if and only if (1) \(p_{01}=0\), \(p=q\), and \(p_{00} = 1-p\) or (2) \(p_{00}=0\), \(p=1-q\), and \(p_{01} = p\) if and only if \(\alpha \wedge \beta = 0\) and \(\alpha \vee \beta = 1\).
\\
\ref{Prop.Bernoulli5}: \(X\) and \(Y\) are comonotone if and only if \(p_{00} = (1-p) \wedge (1-q)\) if and only if \(\alpha = \frac{1-q}{1-p} \wedge 1\).
\\
\ref{Prop.Bernoulli6}: \(X\) and \(Y\) are countermonotone if and only if \(p_{00} = ((1-p)+(1-q)-1) \vee 0\) if and only if \(\alpha = 0 \vee \left(1- \frac{q}{1-p}\right)\).
\end{proof}

\subsection{Remaining proofs of Section \ref{sec52}}

\begin{proof}[Proof of Theorem \ref{propschurnormal}.]
    For \(S\) defined in Example \ref{exdimrednormal}, a reduced random vector of \((Y,\XX)\) is given by \((F_Y(Y),\Phi(S))\); see \eqref{eqexdimrednormal7}. Hence, for \(S'\) defined analogously, we obtain from Theorem \ref{propcharS} that
    \begin{align}\label{proofpropschurnormal2}
        (Y,\XX) \preccurlyeq_{ccx} (Y',\XX') \quad \Longleftrightarrow \quad (F_Y(Y),\Phi(S)) \leq_c (F_{Y'}(Y'),\Phi(S')).
    \end{align}
    Since \((Y,S)\) is bivariate normal with \(\var(S) = 1\), the distribution function of the random vector \((F_Y(Y),\Phi(S))\) is the Gaussian copula with correlation parameter \(\varrho=\sqrt{\Sigma_{Y,\XX}\Sigma_\XX^- \Sigma_{\XX,Y}/\sigma_Y^2}\,;\) see \eqref{eqexdimrednormal2ab}. Similarly, \((F_{Y'}(Y'),\Phi(S'))\) follows a Gaussian copula with parameter \(\varrho' = \sqrt{\Sigma_{Y',\XX'}\Sigma_{\XX'}^- \Sigma_{\XX',Y'}/\sigma_{Y'}^2}\).
    Since the Gaussian copula is increasing in its parameter with respect to the concordance order \cite{Ansari-Rockel-2023}, the right-hand side of \eqref{proofpropschurnormal2} is equivalent to \(\varrho \leq \varrho'\,.\) 
\end{proof}


\subsection{Proofs of Section \ref{secdepmea}}

\begin{proof}[Proof of Theorem \ref{theconsccx}:]
We first prove that \(\xi_\varphi\) is increasing in the conditional convex order. 
Due to the marginal constraint, \((Y,\XX)\preccurlyeq_{ccx} (Y',\XX')\) implies that the normalizing constants of \(\xi_\varphi(Y,\XX)\) and \(\xi_\varphi(Y',\XX')\) are equal.
Then, by the Hardy-Littlewood-Polya theorem (Lemma \ref{lemhlpt}) and the characterization of \(\preccurlyeq_{ccx}\) by the Schur order in Theorem \ref{thecharSchur}, it follows that
\begin{align*}
  \alpha_\varphi \, \xi_\varphi(Y,\XX)
  &   =  \int_{\R} \int_{\R^{p}} \varphi(F_{Y|\XX=\xx}(y)-F_{Y}(y)) \de P^\XX(\uu) \de P^Y(y)
  \\
  &   =  \int_{(0,1)} \int_{(0,1)^{p}} \varphi \big(F_{Y|\XX={\bf q}_\XX(\uu)}(q_Y(v))-F_{Y}(q_Y(v)) \big) \de \lambda^p(\uu) \de \lambda(v) 
  \\
  & = \int_{(0,1)} \int_{(0,1)^{p}} \varphi (\eta_{Y|\XX}^v(\uu) - F_Y(q_Y(v)) ) \de \lambda^p(\uu) \de \lambda(v) \\
  & \leq \int_{(0,1)} \int_{(0,1)^{p'}} \varphi (\eta_{Y'|\XX'}^v(\uu) - F_{Y'}(q_{Y'}(v)) ) \de \lambda^{p'}(\uu) \de \lambda(v) 
  \\
  & = \cdots   =  \alpha_\varphi \, \xi_\varphi(Y',\XX')
\end{align*}
where we use that \(z \mapsto \varphi(z-c)\) is convex whenever \(\varphi\) is convex and that \(\overline{\Ran(F_Y)} = \overline{\Ran(F_{Y'})}\).
By a similar reasoning,  \(\Lambda_\varphi\) is increasing in \(\preccurlyeq_{ccx}\).

To prove assertion \ref{theconsccx1}, we use that \((Y,\XX)\preccurlyeq_{ccx}(Y',\XX')\) implies \eqref{theconsccx0} as already shown. Since the conditional convex order satisfies Axiom \ref{axdepord2}, \((Y,\XX)\) is a minimal element in \(\preccurlyeq_{ccx}\) if \(Y\) and \(\XX\) are independent. In this case, we have \(\xi_\varphi(Y,\XX) = 0 = \Lambda_\varphi(Y,\XX)\). Similarly, the conditional convex order satisfies Axiom \ref{axdepord3}, and thus \((Y,\XX)\) is a maximal element in \(\preccurlyeq_{ccx}\) if \(Y\) perfectly depends on \(\XX\). In this case, we have \(\xi_\varphi(Y,\XX) = 1 = \Lambda_\varphi(Y,\XX)\), which is also immediate from the definition of the functionals.

The zero-independence property in \ref{theconsccx2} follows from the fact that \(Y\) and \(\XX\) are independent if and only if \(F_{Y|\XX=\xx} = F_{Y}\) for \(P^\XX\)-almost every \(\xx \in \R^p\). This condition is equivalent to requiring that the inner integrals in the definitions of \(\xi_\varphi\) and \(\Lambda_\varphi\) vanish \(P^\XX\)-almost everywhere and \(P^\XX \otimes P^\XX\)-almost everywhere, respectively, since \(\varphi\) is strictly convex at $0$ with $\varphi(0) = 0$.
\\
The max-functionality property of \(\Lambda_\varphi\) is a direct extension of \cite[Proof of Theorem 3.2]{Ansari-LFT-2023} to a multivariate vector \(\XX=(X_1,\ldots,X_p)\).
To prove max-functionality for \(\xi_\varphi\), it remains to show that \(\xi_\varphi(Y,\XX) = 1\) implies \(Y = f(\XX)\) almost surely. 
Therefore, define \(f_y(\xx) := F_{Y|\XX=\xx}(y)\) and \(a := F_{Y}(y)\) and set
\begin{align*}
  c_{1-a} & := f_y(\xx) - \min\{f_y(\xx), 1-f_y(\xx)\} \in [0,1]
  \\
  c_{- a}  & := (1-f_y(\xx)) - \min\{f_y(\xx), 1-f_y(\xx)\} \in [0,1]
  \\
  c_{b}   & := 2 \, \min\{f_y(\xx), 1-f_y(\xx)\} \in [0,1]\,.
\end{align*}
Then \(c_{1-a} + c_{-a} + c_{b} = 1\) and convexity of \(\varphi\) gives
\begin{align*}
  \varphi(f_y(\xx)-a)
  &   =  \varphi \left( (1-a) \, c_{1-a} + (-a) \, c_{-a} + \frac{1-2a}{2} c_{b} \right)  
  \\
  & \leq \varphi(1-a) \, c_{1-a} + \varphi(-a) \, c_{-a} + \varphi\left(\frac{1-2a}{2}\right) \, c_{b} 
  \\
  &   =  \varphi(1-a) \, f_y(\xx) + \varphi(-a) \, (1-f_y(\xx)) 
  \\ 
  & \qquad - \min\{f_y(\xx), 1-f_y(\xx)\} \, \left( \underbrace{\varphi(1-a) + \varphi(-a) - 2\, \varphi\left(\frac{1-2a}{2}\right)}_{=: c(a)} \right)
  \\
  & 
      =  \int_{\R} \varphi \left( \mathds{1}_{\{y_1 \leq y\}} - a \right) \de P^{Y|\XX=\xx}(y_1)
      - \min\{f_y(\xx), 1-f_y(\xx)\} \, c(a) \,,
\end{align*}
where \(c(a) > 0\) follows from strict convexity of \(\varphi\) at \(0\) with \(\varphi(0)=0\). 
Consequently, 
\begin{align*}
  1 
  &   =  \xi_\varphi(Y,\XX) 
      =  \alpha_\varphi^{-1} \, \int_{\R} \int_{\R^{p}} \varphi(f_y(\xx)-a) \de P^\XX(\xx) \de P^Y(y)
  \\
  & \leq \alpha_\varphi^{-1} \, \int_{\R} \int_{\R^{p}} \int_{\R} \varphi \left( \mathds{1}_{\{y_1 \leq y\}} - a \right) \de P^{Y|\XX=\xx}(y_1) \de P^\XX(\xx) \de P^Y(y)
  \\
  & \qquad - \alpha_\varphi^{-1} \, \int_{\R} \int_{\R^{p}} \underbrace{c(a)}_{> 0} \cdot \min\{f_y(\xx), 1-f_y(\xx)\} \de P^\XX(\xx) \de P^Y(y)
  \\
  & \leq \alpha_\varphi^{-1} \, \int_{\R} \int_{\R} \varphi \left( \mathds{1}_{\{y_1 \leq y\}} - a \right) \de P^{Y}(y_1) \de P^Y(y)
      =  1\,.
\end{align*}
Hence, there exists some Borel set \(B \subseteq \R^p\) with \(P^\XX(B) = 1\) such that \(\min\{f_y(\xx), 1-f_y(\xx)\} = 0\) for all \(\xx \in B\), implying that \(f_y(\xx) = F_{Y|\XX=\xx}(y) \in \{0,1\}\) for all \(\xx \in B\). Now, proceeding as in the proof of Theorem 3.2 in \cite{Ansari-LFT-2023} yields the claimed max-functionality property of \(\xi_\varphi\).


For verifying \ref{theconsccx3}, it remains to show that \(\Lambda_\varphi(Y,\XX) = \Lambda_\varphi(Y,(\XX,\ZZ))\) implies conditional independence of \(Y\) and \(\ZZ\) given \(\XX\).
Therefore, define again \(f_y(\xx):= F_{Y|\XX=\xx}(y)\) and \(g_y(\xx,\zz):= F_{Y|\XX=\xx,\ZZ=\zz}(y)\,.\)
Using \(\Lambda_\varphi(Y,\XX) = \Lambda_\varphi(Y,(\XX,\ZZ))\) and Jensen's inequality, it follows that
\begin{align*}
    & \beta_\varphi \Lambda_\varphi (Y,\XX) 
         = \int \int \varphi(f_y(\xx_1) - f_y(\xx_2)) \de P^\XX \otimes P^\XX(\xx_1,\xx_2) \de P^Y(y)\\
        & = \int \int \varphi\left(\int g_y(\xx_1,\zz_1)-g_y(\xx_2,\zz_2) \de P^{\ZZ|\XX=\xx_1}\otimes P^{\ZZ|\XX=\xx_2}(\zz_1,\zz_2) \right) \de P^\XX\otimes P^\XX(\xx_1,\xx_2) \de P^Y(y)\\
        & \leq \int \int \int \varphi( g_y(\xx_1,\zz_1) - g_y(\xx_2,\zz_2)) \de P^{\ZZ|\XX=\xx_1}\otimes P^{\ZZ|\XX=\xx_2}(\zz_1,\zz_2)  \de P^\XX\otimes P^\XX(\xx_1,\xx_2) \de P^Y(y)\\
        & = \beta_\varphi \Lambda_\varphi (Y,(\XX,\ZZ)) = \beta_\varphi \Lambda_\varphi (Y,\XX)\,,
\end{align*}
    and thus the inequality becomes an equality. 
    Define \(\ZZ_\xx^i:= q_{\ZZ|\XX=\xx}(\UU_i)\) for \(i\in \{1,2\}\) and \(\UU_1,\UU_2\) i.i.d. uniformly distributed on \([0,1]^r,\) where \(r\) is the dimension of \(\ZZ\).
    Hence, there exist a \(P^Y\)-null set \(N\) and for each \(y\in N^c\) some Borel set \(G_y\) with \(P^\XX\otimes P^\XX(G_y) = 1\) such that 
    \begin{align}
        \varphi(f_y(\xx_1)-f_y(\xx_2)) = \E \varphi\left( g_y(\xx_1,\ZZ_{\xx_1}^1)-g_y(\xx_2,\ZZ_{\xx_2}^2)\right)
    \end{align}
    for all \((\xx_1,\xx_2)\in G_y\) and \(y\in N^c\,.\) It follows that 
    \begin{align*}
        g_y(\xx_1,\ZZ_{\xx_1}^1) - g_y(\xx_2,\ZZ_{\xx_2}^2) = c \in [-1,1] \quad \text{a.s.}
    \end{align*}
    for such \((\xx_1,\xx_2)\) and \(y\) since \(\varphi\) is strictly convex. 
    It follows that 
    \begin{align*}
      c = \E(g_y(\xx_1,\ZZ_{\xx_1}^1) - g_y(\xx_2,\ZZ_{\xx_2}^2))
        = f_y(\xx_1)-f_y(\xx_2) \quad \text{a.s.}
    \end{align*}
    and hence 
    \begin{align}\label{prothelambdaphi5}
      f_y(\xx_1) - g_y(\xx_1,\ZZ_{\xx_1}^1) 
      = f_y(\xx_2) - g_y(\xx_2,\ZZ_{\xx_2}^2) \quad \text{a.s.}
    \end{align}
    for all \((\xx_1,\xx_2)\in G_y\) and \(y\in N^c\,.\) Integrating out in \eqref{prothelambdaphi5} then gives
    \begin{align*}
      f_y(\xx_1) - g_y(\xx_1,\zz_1) 
      = \int f_y(\xx) - g_y(\xx,\zz) \de P^{\ZZ|\XX=\xx}(\zz)
      = 0\,,
    \end{align*}
    so that \(F_{Y|\XX=\xx,\ZZ=\zz}(y) = g_y(\xx,\zz) = f_y(\xx) = F_{Y|\XX=\xx}(y)\) for \(P^{(\XX,\ZZ)}\)-almost all \((\xx,\zz)\) and \(P^Y\)-almost every $y$. In other words, \(Y\) and \(\ZZ\) are conditionally independent given \(\XX\).  
    Finally, a similar line of arguments yields the equivalence of \(\xi_\varphi(Y,\XX) = \xi_\varphi(Y,(\XX,\ZZ))\) and conditional independence of \(Y\) and \(\ZZ\) given \(\XX\).
\end{proof}

For the proof of Theorem \ref{theOT}, we make use of the \emph{supermodular order} \(\UU \leq_{sm} \VV\) which is defined for bounded, bivariate random vectors \(\UU= (U_1,U_2)\) and \(\VV = (V_1,V_2)\) by
\begin{align}
     \E f(\UU) \leq \E f(\VV) \quad \text{for all supermodular functions } f.
\end{align}
 Recall that \(f\colon \R^2\to \R\) is said to be \emph{supermodular} (\emph{submodular}) if \(f(\xx) + f(\yy) \leq\) \((\geq)\) \(f(\xx\wedge \yy) + f(\xx\vee \yy)\) for all \(\xx,\yy\in \R^2\). 
For bivariate random vectors, the supermodular order and the concordance order are equivalent. For bivariate random vectors from the same Fr\'{e}chet class, the supermodular order is even equivalent with the pointwise order of distribution functions as follows; see \cite[Theorem 2.5]{Muller-2000}.

\begin{lemma}[Characterization of supermodular order]\label{lemcharsm}
    If \(\UU = (U_1,U_2)\) and \(\VV = (V_1,V_2)\) have equal marginal distributions, then the following statements are equivalent. 
    \begin{enumerate}[(i)]
        \item \(\UU\leq_{sm} \VV\),
        \item \(P(U_1 \leq u_1, U_2\leq u_2) \leq P(V_1\leq u_1, V_2\leq u_2)\) for all \((u_1,u_2)\in \R^2\).
    \end{enumerate}
\end{lemma}

Next, recall that, for submodular cost functions \(c\), the comonotone coupling solves the optimal transport problem 
\begin{align}\label{eqopttransprob}
    \cW_c(\nu,\nu') := \inf_{\gamma\in \Pi(\nu,\nu')} \int c(y,y') \de \gamma(y,y')
\end{align}
as follows (see \cite[Theorem 3.1.2]{Rachev-Rueschendorf-1998}).

\begin{lemma}[Comonotone Coupling]\label{lemsolOT}
    Let \(\nu\) and \(\nu'\) be distributions on \(\R\), and let \(Z\sim \nu\) and \(Z'\sim\nu'\) be random variables. Assume that the cost function \(c\) is submodular. Then, for \(V\) uniform on \((0,1)\), the comonotone coupling \((F_Z^{-1}(V),F_{Z'}^{-1}(V))\) solves the optimal transport problem \eqref{eqopttransprob}, i.e. \(\cW_c(\nu,\nu') = \E c(F_Z^{-1}(V),F_{Z'}^{-1}(V))\).
\end{lemma}

We also make use of the following quantile-based information monotonicity.

\begin{lemma}[A variant of information monotonicity]\label{lemcondcomcx}
    For a random variable \(W\) and random vector \((\XX,\ZZ)\), let \(V\sim U(0,1)\) be independent of \((\XX,\ZZ)\). Then \(F_{W|\XX}^{-1}(V) - F_W^{-1}(V) \leq_{cx} F_{W|(\XX,\ZZ)}^{-1}(V) - F_W^{-1}(V)\).
\end{lemma}

\begin{proof}
    By information monotonicity of the conditional convex order, we have \((W,\XX)\preccurlyeq_{ccx} (W,(\XX,\ZZ))\), see Theorem \ref{thefundpropS}.
    Then we obtain from Lemma \ref{lemcondcomrv} for all \(y,y'\in \R\) that 
    \begin{align*}
       P(F_{W|\XX}^{-1}(V) \leq y, &F_W^{-1}(V)\leq y') = \E \min\{F_{W|\XX}(y),F_W(y')\} \\
       &\geq \E \min\{F_{W|(\XX,\ZZ)}(y),F_{W}(y')\} 
       = P(F_{W|(\XX,\ZZ)}^{-1}(V) \leq y, F_{W}^{-1}(V)\leq y')\},
    \end{align*}
    where the inequality follows from the version of the conditional convex order in Remark \ref{remccx}\,\ref{remccxa}.
    Since \(F_{W|\XX}^{-1}(V) \eqd F_{W|(\XX,\ZZ)}^{-1}(V) \eqd F_{W}^{-1}(V)\) by Lemma \ref{lemcondcomrv}, it follows from Lemma \ref{lemcharsm} that 
    \begin{align}
        (F_{W|\XX}^{-1}(V), F_{W}^{-1}(V)) \geq_{sm} (F_{W|(\XX,\ZZ)}^{-1}(V),F_{W}^{-1}(V)).
    \end{align}
    Now, the statement follows from \cite[Theorem 9.A.18]{Shaked-2007}.  
\end{proof}

\begin{proof}[Proof of Theorem \ref{theOT}.]
First we show that the denominator in \eqref{defWcc} is positive. Therefore, let \(W\) and \(W'\) be independent random variables both with distribution \(\nu\). Applying Jensen's inequality twice, we obtain
\begin{align*}
    \int_\R\int_\R c(y,y') \de \nu(y)\de \nu(y') = \E h(W'-W) \geq \int h(y' - \E W) \de \nu(y') > h(\E W' - \E W) = 0,
\end{align*}
where we use for the strict inequality on the one hand that \(h\) is strictly convex at \(0\). On the other hand, we use that \(Y\), and hence \(W \eqd F_Y(Y)\), is non-degenerate and that \(W\eqd W'\). The last equality follows from the assumption \(h(0) = 0\).

For the rest of the proof, we will only consider the numerator of \eqref{defWcc} because the denominators of \(\dW_c(Y,\XX)\) and \(\dW_c(Y',\XX')\) coincide under the marginal constraint.

To prove \eqref{eqtheOT}, we need to show that the numerator of \eqref{defWcc} is \(\preccurlyeq_{ccx}\)-increasing. We abbreviate \(W:= F_Y(Y)\) and \(W':=F_{Y'}(Y')\). Note that \(W\eqd W'\) by the marginal constraint. 
Since the conditional convex order satisfies Axiom \ref{axdepord7} on distributional invariance, \((Y,\XX) \preccurlyeq_{ccx} (Y',\XX')\) implies \((W,\XX) \preccurlyeq_{ccx} (W',\XX')\). From Theorem \ref{propcharS}, we then obtain \((F_W \circ F_{W|\XX}^{-1}(V), V) \geq_{c} (F_{W'} \circ F_{W'|\XX'}^{-1}(V),V)\). Using that the concordance order is invariant under increasing transformations of the components and using the invariance in Proposition \ref{Prop.Inverse}\,\ref{Prop.Inverse1b}, we obtain from Lemma \ref{lemcharsm} that 
\begin{align}\label{eqtheOT2}
    (F_{W|\XX}^{-1}(V),F_{W}^{-1}(V)) \geq_{sm} (F_{W'|\XX'}^{-1}(V),F_{W'}^{-1}(V)).
\end{align}
Since the function \((y,y')\mapsto - h(y'-y)\) is supermodular whenever \(h\) is convex, \eqref{eqtheOT2} 
yields 
\begin{align*}
\begin{split}
    \int_{\R^p} \cW_c(\pi_\xx,\nu) \de \mu(\xx) &= \E h(F_{W|\XX}^{-1}(V) - F_{W}^{-1}(V)) \\
    &\leq \E h(F_{W'|\XX'}^{-1}(V) - F_{W'}^{-1}(V)) = \int_{\R^{p'}} W_c(\pi'_\xx,\nu') \de \mu'(\xx),
\end{split}
\end{align*}
where we use \eqref{eqrepW_c} for the first and, similarly, for the second equality.
Here, \(\nu'\) and \(\mu'\) denote the distribution of \(W'\) and \(\XX'\), respectively, and \(\pi'_\xx\) refers to the conditional distribution \(W'\mid \XX' =\xx\).

To prove statement \ref{theOT1}, let \(Y\) and \(\XX\) be independent. Then \(\pi_\xx = \nu\) for \(\mu\)-almost all \(\xx\in \R^p\). Hence, \(\cW_c(\pi_\xx,\nu) = \cW_c(\nu,\nu) = 0\) for \(\mu\)-almost all \(\xx\) and thus \(\dW(Y,\XX) = 0\). Since \(\overrightarrow{\cW}\) is \(\preccurlyeq_{ccx}\)-increasing due to \eqref{eqtheOT} and since independent random vectors are minimal elements in the conditional convex order due to Theorem \ref{thefundpropS}, we obtain \(\dW(Y',\XX') \geq 0\) for all \((Y',\XX')\). It remains to show that \(\dW(Y',\XX') \leq 1\). But this is a consequence of
\begin{align}\label{eqtheOT6}
    \int_{\R^p} \cW_c(\pi_\xx,\nu) \de\mu( \xx) &\leq \int_{\R^p} \int_\R \int_\R c(y,y') \de\pi_\xx( y) \de\nu( y') \de\mu( \xx) \\
   \nonumber &=  \int_\R \int_\R c(y,y') \de\nu( y) \de\nu( y').
\end{align}

To prove in statement \ref{theOT2} the characterization of independence, it remains to show that \(\dW(Y,\XX) = 0\) implies independence of \(Y\) and \(\XX\).
Therefore, let \(W=F_Y(Y)\) and \(W'=F_{Y'}(Y')\) and consider
\begin{align}\label{eqtheOT4}
    0 = \int_{\R^p} \cW_c(\pi_\xx,\nu) \de \mu(\xx) = \E h(F_{W|\XX}^{-1}(V) - F_W^{-1}(V)) 
\end{align}
where the second equality follows with Lemma \ref{lemsolOT}.
We show that \(\zeta:= F_{W|\XX}^{-1}(V) - F_W^{-1}(V) = 0\) almost surely. Then \(W\) and \(\XX\), and thus \(Y\) and \(\XX\), are independent. To this end, assume that \(P(\zeta\ne 0)>0\). Since \(F_{W|\XX}^{-1}(V) \eqd F_W^{-1}(V)\) due to Lemma \ref{lemcondcomrv}, we have \(\E \zeta = 0\). Using that \(h\) is strictly convex in \(0\), it follows that \(\E h(\zeta) > h(0) = 0\). But this contradicts \eqref{eqtheOT4}. 
The proof that \(\dW_c(Y,\XX) = 1\) characterizes perfect dependence of \(Y\) on \(\XX\) follows in the same line as the proof of \cite[Theorem 2.2(iii)]{wiesel2022} noting that \(h(y'-y) + h(y-y') > h(y-y) + h(y'-y')\) for \(y\ne y'\) since \(h\) is convex and strictly convex in \(0\).

Statement \ref{theOT3} on information monotonicity is a direct consequence of \eqref{eqtheOT}, together with information monotonicity of the conditional convex order.

To prove statement \ref{theOT4}, consider the differences \(\zeta:= F_{F_Y(Y)|\XX}^{-1}(V)-F_{F_Y(Y)}^{-1}(V)\) and \(\zeta':= F_{F_Y(Y)|(\XX,\ZZ)}^{-1}(V)-F_{F_Y(Y)}^{-1}(V)\) for \(V\sim U(0,1)\) independent of \((\XX,\ZZ)\). 
Then, by Lemma \ref{lemcondcomcx}, we have \(\zeta\leq_{cx} \zeta'\). Hence, by Strassen's theorem, there exist versions \(\tilde{\zeta} \eqd \zeta\) and \(\tilde{\zeta}'\eqd \zeta'\) such that \(\E[\tilde{\zeta}'\mid \tilde{\zeta}] = \tilde{\zeta}\) almost surely; see \cite[Corollary 3.36]{Ru-2013}. Since \(h\) is convex, we obtain for \(\alpha := \int_\R \int_\R c(y,y') \de \nu(y) \de \nu(y')\) that
\begin{align}
    \nonumber \alpha\, \dW_c(Y,\XX) = \E h(\tilde{\zeta}) &= \E h(\E[\tilde{\zeta}'\mid \tilde{\zeta}]) = \int_\R h(\E[\tilde{\zeta}'\mid \tilde{\zeta}=w]) \de P^{\tilde{\zeta}}(w)  \\
    \label{eqtheOT8}& \leq \int_\R \E[h(\tilde{\zeta}')\mid \tilde{\zeta}=w] \de P^{\tilde{\zeta}}(w) = \E h(\tilde{\zeta}') = \alpha\, \dW_c(Y,(\XX,\ZZ)),
\end{align}
where the first and last equality are due to \eqref{eqrepW_c} using that \((y,y')\mapsto h(y'-y)\) is submodular because \(h\) is convex. The above inequality is due to Jensen's inequality for conditional expectations. Now, assume that \(Y\) and \(\ZZ\) are conditionally independent given \(\XX\). Then we have \(\zeta\eqd \zeta'\). Hence, using the martingale property, the conditional distribution \(\tilde{\zeta}'\mid \tilde{\zeta}=w\) is degenerate for \(P^{\zeta}\)-almost all \(w\) and the inequality in \eqref{eqtheOT8} becomes an equality. Conversely, if \(Y\) and \(\ZZ\) are not conditionally independent given \(\XX\), then there exists a Borel set \(A\) with \(P(\tilde{\zeta}\in A)>0\) such that \(\tilde{\zeta}'\mid \tilde{\zeta}=w\) is non-degenerate for all \(w\in A\). Then, for \(h\) strictly convex, the inequality in \eqref{eqtheOT8} is strict.
\end{proof}

\begin{proof}[Proof of Theorem \ref{cordepmeas}.] 
    \eqref{cordepmeas1}: Due to Theorem \ref{propcharS}, we know that \((Y,\XX)\preccurlyeq_{ccx} (Y',\XX')\) implies \((F_Y \circ q_{Y;\XX}^{\uparrow U}(V),U) \leq_c (F_{Y'}\circ q_{Y';\XX'}^{\uparrow U}(V),U)\) and thus \(\sR_\mu(Y,\XX) = \mu(F_Y \circ q_{Y;\XX}^{\uparrow U}(V),U) \leq \mu(F_{Y'} \circ q_{Y';\XX'}^{\uparrow U}(V),U) = \sR_\mu (Y',\XX')\). \\
    \ref{cordepmeas2a} \& \ref{cordepmeas2}: \(\XX\) and \(Y\) are independent if and only if \(q_{Y;\XX}^{\uparrow U}(V)\) and \(U\) are independent; see Remark \ref{remcharccx}\,\ref{remcharccx3}.
    Since \(\mu\) characterizes independence on \(\cR^\uparrow\), the latter is equivalent to \(\sR_\mu(Y,\XX) = \mu(F_Y\circ q_{Y;\XX}^{\uparrow U}(V),U) = 0\).
    \(Y\) perfectly depends on \(\XX\) if and only if \(q_{Y;\XX}^{\uparrow U}(V)\) and \(U\) are comonotone; see Remark \ref{remcharccx}\,\ref{remcharccx3}. Since \(\mu\) characterizes comonotonicity on \(\cR^\uparrow\), the latter is equivalent to \(\sR_\mu(Y,\XX) = \mu(F_Y\circ q_{Y;\XX}^{\uparrow U}(V),U) = 1\). 
    Assertion \ref{cordepmeas2a} then follows from \eqref{cordepmeas1}.\\
    \ref{cordepmeas4}: By Theorem \ref{thefundpropS}, we have \((Y,\XX) \preccurlyeq_{ccx} (Y,(\XX,\ZZ))\). Hence, \eqref{cordepmeas1} implies the statement.\\
    \ref{cordepmeas5}: Assume that \(\sR_\mu(Y,\XX) = \sR_\mu(Y,(\XX,\ZZ))\). By information monotonicity of \(\preccurlyeq_{ccx}\) and due to Theorem \ref{propcharS}, 
    we know that \((F_Y \circ q_{Y;\XX}^{\uparrow U}(V),U)\leq_{c} (F_{Y} \circ q_{Y;(\XX,\ZZ)}^{\uparrow U}(V),U)\). This implies \(\sR_\mu(Y,\XX) = \mu(F_{Y} \circ q_{Y;\XX}^{\uparrow U}(V),U) \leq \mu(F_{Y} \circ q_{Y;(\XX,\ZZ)}^{\uparrow U}(V),U) = \sR_\mu(Y,(\XX,\ZZ)) = \sR_\mu(Y,\XX)\) and thus \(\mu(F_{Y} \circ q_{Y;\XX}^{\uparrow U}(V),U) = \mu(F_{Y} \circ q_{Y;(\XX,\ZZ)}^{\uparrow U}(V),U)\). Since \(\mu\) is strictly increasing on \(\cR^\uparrow\), it follows that \((F_{Y} \circ q_{Y;\XX}^{\uparrow U}(V),U) =_c (F_{Y} \circ q_{Y;(\XX,\ZZ)}^{\uparrow U}(V),U)\) and thus, by Theorem \ref{propcharS}, \((Y,\XX) =_{ccx} (Y,(\XX,\ZZ))\). However, the latter means that \(Y\) and \(\ZZ\) are conditionally independent given \(\XX\). For the reverse direction, assume that \(Y\) and \(\ZZ\) are conditionally independent given \(\XX\). Then \((Y,\XX) =_{ccx} (Y,(\XX,\ZZ))\) and thus, by Theorem \ref{propcharS}, \((F_{Y} \circ q_{Y;\XX}^{\uparrow U}(V),U) \eqd (F_{Y} \circ q_{Y;(\XX,\ZZ)}^{\uparrow U}(V),U)\). This implies \(\sR_\mu(Y,\XX) = \mu(F_{Y} \circ q_{Y;\XX}^{\uparrow U}(V),U) = \mu(F_{Y} \circ q_{Y;(\XX,\ZZ)}^{\uparrow U}(V),U) = \sR_\mu(Y,(\XX,\ZZ))\).
\end{proof}




\subsection{Proofs of Section \ref{seccompS}}

\begin{proof}[Proof of Proposition \ref{rembivSchur}.]
Assume that \((Y,X)\preccurlyeq_{ccx} (Y',X')\). Since \(Y\eqd Y'\), we obtain from a variant of Theorem \ref{propcharS} that this is equivalent to 
\begin{align}\label{proofrembivSchur1}
    (q_{Y;X}^{\uparrow U}(V),U) \leq_c (q_{Y';X'}^{\uparrow U}(V),U)
\end{align} 
for \(U,V\) independent and uniform on \((0,1)\).
Since \(Y\uparrow_{st} X\) it follows that \(P(Y\leq y\mid X=q_X(u)) = F_u(y)\) and thus 
\begin{align}\label{proofrembivSchur2}
       q_{Y;X}^{\uparrow U}(V) = F_U^{-1}(V) = F_{Y|X=q_X(U)}^{-1}(V),
\end{align}
with \(q_{Y;X}^{\uparrow u}(v)\) defined by \eqref{defyu}.
Similarly, we obtain \(q_{Y';X'}^{\uparrow U}(V) = F_{Y'|X'=q_{X'}(U)}^{-1}(V)\). Now, observe that 
\begin{align}\label{proofrembivSchur3}
    (Y,X) \eqd (F_{Y|X=q_X(U)}^{-1}(V),q_X(U)) \quad \text{and} \quad (Y',X') \eqd (F_{Y'|X'=q_{X'}(U)}^{-1}(V),q_{X'}(U));
\end{align}
see \eqref{eqpropqt}. Since the concordance order is invariant under increasing transformations, we obtain from \(q_X = q_{X'}\) with \eqref{proofrembivSchur1} and \eqref{proofrembivSchur2} that \((Y,X)\leq_c (Y',X')\). \\
The reverse direction is a special case of Theorem \ref{propmvSchur}\,\ref{propmvSchur2}.
\end{proof}


\subsection{Proofs of Appendix \ref{appA}}

\begin{proof}[Proof of Proposition \ref{Prop.Inverse}.]
    For statement \ref{Prop.Inverse1}, see e.g. \cite{Embrechts2013}; for statements \ref{Prop.Inverse1b} -- \ref{Prop.Inverse4}, see \cite[Lemma A.1]{Ansari-2021}.
\end{proof}

\begin{proof}[Proof of Lemma \ref{lemmargconstraint}]
    The equivalence of \ref{lemmargconstraint1} and \ref{lemmargconstraint3} is given in \cite[Proposition 2.14]{Ansari-2021}. The equivalence between \ref{lemmargconstraint2} and \ref{lemmargconstraint3} is a direct consequence of Proposition \ref{Prop.Inverse}\,\ref{Prop.Inverse2} and \ref{Prop.Inverse3}.
\end{proof}

\begin{proof}[Proof of Proposition \ref{propversccx}]
    The equivalence of \ref{propversccx1} and \ref{propversccx2} as well as the equivalence of \ref{propversccx3} and \ref{propversccx4} follow from the invariance properties of the convex order in Lemma \ref{lemminmaxcx}\,\ref{lemminmaxcx3}.
    
    To show that \ref{propversccx2} implies \ref{propversccx3}, we consider two cases for \(v, v_n \in (0,1)\) with \(q_Y(v) < q_Y(v_n)\) for all \(n\).
    In either case, we obtain for \( S_n := P(Y< q_Y(v_n) \mid \XX)\) and \(S := P(Y\leq q_Y(v) \mid \XX)\) that
    \(\E |S_n-S|
        = \E | P(q_Y(v) < Y < q_Y(v_n) \mid \XX) |
        = P(q_Y(v) < Y < q_Y(v_n))
    \). In both cases, we specify \(v_n\) such that 
    \begin{align} \label{proofpropversccx1}
        \lim_{n \to \infty} \E |S_n-S|
        & = P \left( \bigcap_{n\in \N} \{Y < q_Y(v_n)\} \right) - P(Y \leq q_Y(v)) = 0\,.
    \end{align}
    In the first case, let \(v\in \mathrm{int}(\overline{\Ran(F_Y)}) = \mathrm{int}(\overline{\Ran(F_{Y'})})\) and \(v_n\downarrow v\) (i.e., \(v_n > v\) for all \(n\) and \(v_n\to v\) as \(n\to \infty\)).  
    Then, \(y < q_Y(v_n)\) for all \(n\) implies \(y\leq q_Y(v)\) or \(y\in (q_Y(v),q_Y^+(v))\), where \(q_Y^+(t):=\sup\{x\mid F(x) \leq t\}.\) Since \(\{Y\in (q_Y(v),q_Y^+(v)]\}\) is a \(P\)-null set for this choice of \(v\), we obtain \eqref{proofpropversccx1}.\\
    In the second case, let \(v\in (0,1)\setminus \overline{\Ran(F_Y)} = (0,1)\setminus \overline{\Ran(F_{Y'})}\) and \(v_n\downarrow v_0:= F_Y(q_Y(v))\). Then, we have that \(q_Y(v) = q_Y(v_0)\). If \(F_Y\) is strictly increasing on \([q_Y(v_0),q_Y(v_0)+\varepsilon)\) for some \(\varepsilon>0\), then \(q_Y(v_n) \downarrow  q_Y(v_0)\). This implies \(\bigcap_{n\in \N} \{q_Y(v_n) > Y\} = \{q_Y(v) \geq Y\}\), which yields \eqref{proofpropversccx1}. 
    If \(F_Y\) is constant on \([q_Y(v_0),q_Y^+(v_0)]\) and continuous at \(q_Y^+(v_0)\), then \(q_Y(v_n)\downarrow q_Y^+(v_0)\). This implies \(\bigcap_{n\in \N} \{q_Y(v_n) > Y\} = \{q_Y^+(v_0)\geq Y\} = \{q_Y(v_0)\geq Y\} \cup \{Y\in (q_Y(v_0),q_Y^+(v_0)]\}\). Since \(\{Y\in (q_Y(v_0),q_Y^+(v_0)]\}\) is a \(P\)-null set, we again obtain the convergence in \eqref{proofpropversccx1}. 
    If \(F_Y\) is constant on \([q_Y(v_0),q_Y^+(v_0))\) and has a jump at \(q_Y^+(v_0)\), then \(\bigcap_{n\in \N} \{q_Y(v_n) > Y\} = \{q_Y^+(v_0) > Y\} = \{q_Y(v_0)\geq Y\} \cup \{Y\in (q_Y(v_0),q_Y^+(v_0))\}\). Once again, using that \(\{Y\in (q_Y(v_0),q_Y^+(v_0))\}\) is a \(P\)-null set, \eqref{proofpropversccx1} follows. \\
    The convergence in \eqref{proofpropversccx1} implies \(S_n \xrightarrow{~d~} S\) and \(\E|S_n|\to \E|S|\). Similar considerations yield \(S'_n := P(Y'< q_{Y'}(v_n) \mid \XX') \xrightarrow{~d~} S := P(Y'\leq q_{Y'}(v) \mid \XX')\) and \(\E|S'_n|\to \E|S'|\). Using that \(S_n\leq_{cx}S_n'\), we obtain from Lemma \ref{lemminmaxcx}\,\ref{lemminmaxcx4} that \(S\leq_{cx} S'\). This proves \ref{propversccx3}.
    
    It remains to show that \ref{propversccx3} implies \ref{propversccx2}. 
    Therefore, assume that \(P(Y\leq q_Y(u) \mid \XX) \leq_{cx} P(Y'\leq q_{Y'}(u)\mid \XX')\) for all \(u\in (0,1)\setminus N\) for some \(\lambda\)-null set \(N\subset (0,1)\). 
    We distinguish between two cases for \(v, v_n \in (0,1)\) with \(q_Y(v_n) < q_Y(v)\) for all \(n\). 
    Setting \( S_n := P(Y \leq q_Y(v_n) \mid \XX)\) and \(S := P(Y < q_Y(v) \mid \XX)\), we show that
    \begin{align} \label{proofpropversccx2}
        \lim_{n \to \infty} \E |S-S_n|
        & = P(Y < q_Y(v)) - P \left( \bigcup_{n\in \N} \{Y \leq q_Y(v_n)\} \right) = 0\,.
    \end{align}
    In the first case, assume that \(F_Y\) has no jump at \(q_Y(v)\). Choose \(v_n\in (0,1)\setminus N\) with \(v_n\uparrow v\). Then, using left-continuity of \(q_Y\), we obtain \(\bigcup_{n\in \N} \{Y\leq q_Y(v_n)\} = \{Y < q_Y(v)\}\), which implies \eqref{proofpropversccx2}. 
    In the second case, assume that \(F_Y\) has a jump at \(q_Y(v)\) and set \(v_0:= F_Y^-(q_Y(v))\). 
    The case \(v_0=0\) is trivial. Hence, suppose \(v_0>0\). 
    Choose \(v_n\in (0,1)\setminus N\) with \(v_n\uparrow v_0\). 
    Then, a distinction of cases as in the reverse direction yields \(P\left(\bigcup_{n\in\N} \{Y\leq q_Y(v_n)\}\right) = P\left(\{Y < q_Y(v_0)\}\right)\), which again yields  \eqref{proofpropversccx2}.
    Both cases imply \(S_n \xrightarrow{~d~} S\) and \(\E|S_n|\to \E|S|\).   
    Similarly, we obtain \(S_n' := P(Y'\leq q_{Y'}(v_n) \mid \XX') \xrightarrow{~d~} P(Y' < q_{Y'}(v) \mid \XX') =: S'\) and \(\E|S_n'|\to \E|S'|\).
    Using \(S_n \leq_{cx} S_n'\) for all \(n\), Lemma \ref{lemminmaxcx}\,\ref{lemminmaxcx4} yields \(S \leq_{cx} S'\).    
    This proves \ref{propversccx2}.
\end{proof}

\section*{Acknowledgement}

This research was funded in whole by the Austrian Science Fund (FWF) 
{[10.55776/PAT1669224]} project \emph{Stochastic orders for functional dependence} and 
{[10.55776/P36155]} project \emph{ReDim: Quantifying dependence via dimension reduction}.
The second author further acknowledges the support from the WISS 2025 project `IDA-lab Salzburg' 20204-WISS/225/197-2019 and 20102-F1901166-KZP.



\bibliographystyle{abbrvnat}
\bibliography{ccx_arXiv}

@article{ansari2023MFOCI,
  title={A direct extension of {A}zadkia \& {C}hatterjee's rank correlation to a vector of endogenous variables},
  author={Ansari, Jonathan and Fuchs, Sebastian},
  journal={Available at \url{https://arxiv.org/abs/2212.01621}},
  year={2025+}
}

@article{Azadkia-2025,
 author = {Azadkia, Mona and Roudaki, Pouya},
 title = {A new measure of dependence: {Integrated} ${R}^2$},
 journal = {Available at \url{https://arxiv.org/abs/2505.18146}},
 year = {2025+}
}

@article{bernoulli2021,
author  = {Han, F.},
title   = {On extensions of rank correlation coefficients to multivariate spaces.},
journal = {Bernoulli},
volume  = {28},
number = {2},
year    = {2021},
pages   = {7--11},
}

@article{chatterjee2021,
author  = {Azadkia, M. and Chatterjee, S.} ,
title   = {A simple measure of conditional dependence.},
journal = {Ann. Stat.},
volume  = {49},
number  = {6},
year    = {2021},
pages   = {3070--3102},
}

@InCollection{chatterjee2023,
author  = {Chatterjee, S.} ,
title   = {A survey of some recent developments in measures of association.},
booktitle = {Probability and {S}tochastic {P}rocesses. Indian {S}tatistical {I}nstitute {S}eries.},
publisher = {Springer},
address = {Singapore},
year      = {2024},
editor    = {Athreya, S. and Bhatt, A.~G. and Rao, B.~V.}
}

@incollection{Abdous-2005,
  title={Dependence properties of meta-elliptical distributions},
  author={Abdous, Belkacem and Genest, Christian and R{\'e}millard, Bruno},
  booktitle={Statistical Modeling and Analysis for Complex Data Problems},
  pages={1--15},
  year={2005},
  publisher={Springer, New York}
}

@article{auddy2021,
author  = {Auddy, A. and Deb, N. and Nandy, S.},
title   = {Exact detection thresholds and minimax optimality of {C}hatterjee's correlation coefficient.},
journal = {Bernoulli},
volume  = {30},
number  = {2},
year    = {2024},
pages   = {1640--1668},
}

@article{Ansari-Rockel-2023,
  title={Dependence properties of bivariate copula families},
  author={Ansari, Jonathan and Rockel, Marcus},
  journal={Depend. Model.},
  volume={12},
  year={2024},
  pages={20240002}
}

@Article{Ansari-LFT-2023,
 Author = {Ansari, Jonathan and Langthaler, Patrick B and Fuchs, Sebastian and Trutschnig, Wolfgang},
 Title = {Quantifying and estimating dependence via sensitivity of conditional distributions},
 FJournal = {Bernoulli},
 Journal = {Bernoulli},
 xxxISSN = {0885-6125},
 Volume = {32},
 Number = {1},
 Pages = {179--204},
 Year = {2026},
 xxxLanguage = {English},
 xxxDOI = {10.3150/25-BEJ1854},
 xxxKeywords = {68T05,90C25},
 xxxzbMATH = {5537389},
 xxxZbl = {1470.68073}
}

@article{Bauerle-1997,
 author = {B{\"a}uerle, Nicole},
 title = {Inequalities for stochastic models via supermodular orderings},
 fjournal = {Communications in Statistics. Stochastic Models},
 journal = {Commun. Stat., Stochastic Models},
 xxxissn = {0882-0287},
 volume = {13},
 number = {1},
 pages = {181--201},
 year = {1997},
 language = {English},
 xxxdoi = {10.1080/15326349708807420},
 xxxkeywords = {60E15},
 xxxurl = {publikationen.bibliothek.kit.edu/1000043737},
 xxxzbMATH = {1000448},
 xxxZbl = {0871.60015}
}

@article{Beiglboeck-2016,
 author = {Beiglb{\"o}ck, Mathias and Juillet, Nicolas},
 title = {On a problem of optimal transport under marginal martingale constraints},
 fjournal = {The Annals of Probability},
 journal = {Ann. Probab.},
 xxxissn = {0091-1798},
 volume = {44},
 number = {1},
 pages = {42--106},
 year = {2016},
 xxxlanguage = {English},
 xxxdoi = {10.1214/14-AOP966},
 xxxkeywords = {49Q20,93E20,49J55,60G42},
 xxxzbMATH = {6571496},
 xxxZbl = {1348.49045}
}

@article{Borgonovo-2021,
 author = {Borgonovo, Emanuele and Hazen, Gordon B. and Jose, Victor Richmond R. and Plischke, Elmar},
 title = {Probabilistic sensitivity measures as information value},
 fjournal = {European Journal of Operational Research},
 journal = {Eur. J. Oper. Res.},
 xxxissn = {0377-2217},
 volume = {289},
 number = {2},
 pages = {595--610},
 year = {2021},
 xxxlanguage = {English},
 xxxdoi = {10.1016/j.ejor.2020.07.010},
 xxxkeywords = {91B44,62C99,91B06},
 xxxzbMATH = {7354341},
 xxxZbl = {1487.91071}
}

@article{bickel2020,
author  = {Cao, S. and Bickel, P.J.} ,
title   = {Correlations with tailored extremal properties.},
journal = {Available at \url{https://arxiv.org/abs/2008.10177v2}},
volume  = {},
year    = {2022},
pages   = {},
}

@article{Block-1988,
 author = {Block, Henry W. and Sampson, Allan R.},
 title = {Conditionally ordered distributions},
 fjournal = {Journal of Multivariate Analysis},
 journal = {J. Multivariate Anal.},
 xxxissn = {0047-259X},
 volume = {27},
 number = {1},
 pages = {91--104},
 year = {1988},
 xxxlanguage = {English},
 xxxdoi = {10.1016/0047-259X(88)90118-2},
 xxxkeywords = {62H05},
 xxxzbMATH = {4060541},
 xxxZbl = {0649.62042}
}

@article{buecher2024,
author  = {Bücher, A. and Dette, H.} ,
title   = {On the lack of weak continuity of {C}hatterjee's correlation coefficient},
journal = {Available at \url{https://arxiv.org/abs/2410.11418}; forthcoming in: Statistical Science},
volume  = {},
year    = {2025},
pages   = {},
}

@article{chatterjee2020,
author  = {Chatterjee, S.} ,
title   = {A new coefficient of correlation.},
journal = {J. Amer. Statist. Ass.},
volume  = {116},
number  = {536},
year    = {2020},
pages   = {2009-2022},
}

@Article{Chiaromonte-2002,
 Author = {Chiaromonte, Francesca and Cook, R. Dennis},
 Title = {Sufficient dimension reduction and graphics in regression},
 FJournal = {Annals of the Institute of Statistical Mathematics},
 Journal = {Ann. Inst. Stat. Math.},
 xxxISSN = {0020-3157},
 Volume = {54},
 Number = {4},
 Pages = {768--795},
 Year = {2002},
 xxxLanguage = {English},
 xxxDOI = {10.1023/A:1022411301790},
 xxxKeywords = {62J05,62A09,62H99,62J02},
 xxxzbMATH = {1908619},
 xxxZbl = {1047.62066}
}

@article{Dabrowska-1981,
 author = {Dabrowska, D.},
 title = {Regression-based orderings and measures of stochastic dependence},
 fjournal = {Mathematische Operationsforschung und Statistik. Series Statistics},
 journal = {Math. Operationsforsch. Stat., Ser. Stat.},
 xxxissn = {0323-3944},
 volume = {12},
 pages = {317--325},
 year = {1981},
 xxxlanguage = {English},
 xxxdoi = {10.1080/02331888108801592},
 xxxkeywords = {62H20},
 xxxzbMATH = {3797015},
 xxxZbl = {0505.62033}
}

@article{deb2020,
 author = {Deb, Nabarun and Ghosal, Promit and Sen, Bodhisattva},
 title = {Distribution-free measures of association based on optimal transport},
 year = {2024},
 journal   = {Available at \url{https://arxiv.org/abs/2411.13080}},
 xxxkeywords = {62G10,62H20,60F05,60D05},
 xxxurl = {https://arxiv.org/abs/2411.13080},
 xxxarXiv = {arXiv:2411.13080}
}

@article{deb2020b,
author  = {Huang, Z. and Deb, N. and Sen, B.},
title   = {Kernel partial correlation coefficient — a measure of conditional dependence.},
journal = {J. Mach. Learn. Res.},
volume  = {23},
number  = {216},
year    = {2022},
pages   = {1--58},
}

@article{Joe-1987,
 author = {Joe, Harry},
 title = {Majorization, randomness and dependence for multivariate distributions},
 fjournal = {The Annals of Probability},
 journal = {Ann. Probab.},
 xxxissn = {0091-1798},
 volume = {15},
 pages = {1217--1225},
 year = {1987},
 xxxlanguage = {English},
 xxxdoi = {10.1214/aop/1176992093},
 xxxkeywords = {60E15,62H20},
 xxxzbMATH = {4074061},
 xxxZbl = {0657.60022}
}

@article{Joe-1990,
 author = {Joe, Harry},
 title = {Multivariate concordance},
 fjournal = {Journal of Multivariate Analysis},
 journal = {J. Multivariate Anal.},
 xxxissn = {0047-259X},
 volume = {35},
 number = {1},
 pages = {12--30},
 year = {1990},
 xxxlanguage = {English},
 xxxdoi = {10.1016/0047-259X(90)90013-8},
 xxxkeywords = {62H20,62H05},
 xxxzbMATH = {6556},
 xxxZbl = {0741.62061}
}

@book{Joe-2014,
 author = {Joe, Harry},
 title = {Dependence Modeling with Copulas},
 fseries = {Monographs on Statistics and Applied Probability},
 xxxisbn = {978-1-4665-8322-1; 978-1-032-47737-4; 978-1-4665-8323-8},
 year = {2014},
 publisher = {CRC Press, Boca Raton, FL},
 xxxlanguage = {English},
 xxxdoi = {10.1201/b17116},
 xxxkeywords = {62-02,62-01,62H05,62Hxx,65C60},
 xxxzbMATH = {6345338},
 xxxZbl = {1346.62001}
}

@article{siburg2013,
author  = {Dette, H. and Siburg, K. F. and Stoimenov, P. A.} ,
title   = {A copula-based non-parametric measure of regression dependence.},
journal = {Scand. J. Statist.},
volume  = {40},
number = {1},
year    = {2013},
pages   = {21--41},
}

@article{Fang-Joe-1992,
 author = {Fang, Z. and Joe, H.},
 title = {Further developments on some dependence orderings for continuous bivariate distributions},
 fjournal = {Annals of the Institute of Statistical Mathematics},
 journal = {Ann. Inst. Stat. Math.},
 xxxissn = {0020-3157},
 volume = {44},
 number = {3},
 pages = {501--517},
 year = {1992},
 xxxlanguage = {English},
 xxxkeywords = {62H05,60E15},
 xxxzbMATH = {148206},
 xxxZbl = {0764.62044}
}

@book{Foellmer-Schied-2016,
 author = {F{\"o}llmer, Hans and Schied, Alexander},
 title = {Stochastic Finance. {An} Introduction in Discrete Time},
 edition = {4th},
 year = {2016},
 publisher = {De Gruyter, Berlin},
}

@article{Figalli-2025,
  title={Convexity and measures of statistical association},
  author={Borgonovo, Emanuele and Figalli, Alessio and Ghosal, Promit and Plischke, Elmar and Savar{\'e}, Giuseppe},
  journal={J. R. Stat. Soc. Ser. B Stat. Method.},
  volume={87},
  issue={4},
  pages={1281--1304},
  year={2025}
}

@article{Gamboa-Klein-Lagnoux-2018,
 author = {Gamboa, Fabrice and Klein, Thierry and Lagnoux, Agn{\`e}s},
 title = {Sensitivity analysis based on {Cram{\'e}r}-von {Mises} distance},
 fjournal = {SIAM/ASA Journal on Uncertainty Quantification},
 journal = {SIAM/ASA J. Uncertain. Quantif.},
 xxxissn = {2166-2525},
 volume = {6},
 pages = {522--548},
 year = {2018},
 xxxlanguage = {English},
 xxxdoi = {10.1137/15M1025621},
 xxxkeywords = {62G05,62G20},
 xxxzbMATH = {6908452},
 xxxZbl = {1392.62091}
}

@article{fgwt2021,
author  = {Griessenberger, F. and Junker, R.R. and Trutschnig, W.},
title   = {On a multivariate copula-based dependence measure and its estimation.},
journal = {Electron. J. Statist.},
volume  = {16},
year    = {2022},
pages   = {2206--2251},
}

@article{hörmann2025,
author  = {Strenger, D. and Hörmann, S.},
title   = {{A}zadkia–{C}hatterjee's dependence coefficient for infinite dimensional data},
journal = {Bernoulli},
volume  = {},
year    = {2025},
pages   = {to appear},
}

@article{Kimeldorf-1987,
 author = {Kimeldorf, George and Sampson, Allan R.},
 title = {Positive dependence orderings},
 fjournal = {Annals of the Institute of Statistical Mathematics},
 journal = {Ann. Inst. Stat. Math.},
 xxxissn = {0020-3157},
 volume = {39},
 number = {1-2},
 pages = {113--128},
 year = {1987},
 xxxlanguage = {English},
 xxxkeywords = {62E10,62H05},
 xxxzbMATH = {4001173},
 xxxZbl = {0617.62006}
}

@article{han2022limit,
author  = {Lin, Z. and Han, F.},
title   = {Limit theorems of {C}hatterjee's rank correlation},
journal = {Available at \url{https://arxiv.org/abs/2204.08031v2}},
volume  = {},
year    = {2022},
pages   = {},
}

@Article{Li-2007,
 Author = {Li, Lexin},
 Title = {Sparse sufficient dimension reduction},
 FJournal = {Biometrika},
 Journal = {Biometrika},
 xxxISSN = {0006-3444},
 Volume = {94},
 Number = {3},
 Pages = {603--613},
 Year = {2007},
 xxxLanguage = {English},
 xxxDOI = {10.1093/biomet/asm044},
 xxxKeywords = {62J99,62H12,65K10,65C60},
 xxxzbMATH = {5251773},
 xxxZbl = {1135.62062}
}

@Article{Li-1991,
 Author = {Li, Ker-Chau},
 Title = {Sliced inverse regression for dimension reduction.},
 FJournal = {Journal of the American Statistical Association},
 Journal = {J. Am. Stat. Assoc.},
 xxxISSN = {0162-1459},
 Volume = {86},
 Number = {414},
 Pages = {316--327},
 Year = {1991},
 xxxLanguage = {English},
 xxxDOI = {10.2307/2290563},
 xxxKeywords = {62G07,62H99,62H25},
 xxxzbMATH = {23393},
 xxxZbl = {0742.62044}
}

@Article{Li-2009,
 Author = {Li, Bing and Dong, Yuexiao},
 Title = {Dimension reduction for nonelliptically distributed predictors},
 FJournal = {The Annals of Statistics},
 Journal = {Ann. Stat.},
 xxxISSN = {0090-5364},
 Volume = {37},
 Number = {3},
 Pages = {1272--1298},
 Year = {2009},
 xxxLanguage = {English},
 xxxDOI = {10.1214/08-AOS598},
 xxxKeywords = {62H12,62G08,62G20,62G09,62E20,65C60},
 xxxzbMATH = {5555705},
 xxxZbl = {1160.62050}
}

@article{Mueller-2005,
 author = {M{\"u}ller, Alfred and Scarsini, Marco},
 title = {Archimedean copulae and positive dependence},
 fjournal = {Journal of Multivariate Analysis},
 journal = {J. Multivariate Anal.},
 xxxissn = {0047-259X},
 volume = {93},
 number = {2},
 pages = {434--445},
 year = {2005},
 xxxlanguage = {English},
 xxxdoi = {10.1016/j.jmva.2004.04.003},
 xxxkeywords = {60E15,62G09,62H10,62H20},
 xxxzbMATH = {2167809},
 xxxZbl = {1065.60018}
}

@book{Rachev-Rueschendorf-1998,
 author = {Rachev, S. T. and R{\"u}schendorf, Ludger},
 title = {Mass Transportation Problems},
 xxxfseries = {Springer Series in Statistics},
 xxxseries = {Springer Ser. Stat.},
 xxxissn = {0172-7397},
 xxxisbn = {0-387-98350-3; 0-387-98352-X},
 year = {1998},
 publisher = {Springer},
 address = {New York},
 xxxlanguage = {English},
 xxxkeywords = {60-02,28-02,28A35,28C15,49K27,60B05,60B10,90C48,90B15},
 xxxzbMATH = {1178281},
 xxxZbl = {0990.60500}
}

@Article{Rueschendorf-1981,
 Author = {R{\"u}schendorf, Ludger},
 Title = {Characterization of dependence concepts in normal distributions},
 FJournal = {Annals of the Institute of Statistical Mathematics},
 Journal = {Ann. Inst. Stat. Math.},
 xxxISSN = {0020-3157},
 Volume = {33},
 Pages = {347--359},
 Year = {1981},
 xxxLanguage = {English},
 xxxDOI = {10.1007/BF02480946},
 xxxKeywords = {62H05,62H10,62H99,62E99},
 xxxzbMATH = {3746256},
 xxxZbl = {0475.62038}
}

@article{Rueschendorf-1991,
 author = {R{\"u}schendorf, Ludger},
 title = {On conditional stochastic ordering of distributions},
 fjournal = {Advances in Applied Probability},
 journal = {Adv. Appl. Probab.},
 xxxissn = {0001-8678},
 volume = {23},
 number = {1},
 pages = {46--63},
 year = {1991},
 xxxlanguage = {English},
 xxxdoi = {10.2307/1427511},
 xxxkeywords = {60E99},
 xxxzbMATH = {4186733},
 xxxZbl = {0719.60018}
}

@article{Scarsini-1984,
 author = {Scarsini, Marco},
 title = {On measures of concordance},
 fjournal = {Stochastica},
 journal = {Stochastica},
 xxxissn = {0210-7821},
 volume = {8},
 pages = {201--218},
 year = {1984},
 xxxlanguage = {English},
 xxxkeywords = {62H20},
 xxxurl = {https://eudml.org/doc/38916},
 xxxzbMATH = {3932217},
 xxxZbl = {0582.62047}
}

@incollection{Scarsini-1990,
 author = {Scarsini, Marco},
 title = {An ordering of dependence},
 booktitle = {Topics in Statistical Dependence.},
 editor={Block, H.~W. and Samspson, A.~R. and Savits, T.~H.},
 xxxisbn = {0-940600-23-4},
 pages = {403--414},
 year = {1990},
 publisher = {Institute of Mathematical Statistics, Hayward, CA},
 xxxlanguage = {English},
 xxxkeywords = {62H20,62H05},
 xxxzbMATH = {218922},
 xxxZbl = {0768.62045}
}

@article{shi2021normal,
author  = {Shi, H. and Drton, M. and Han, F.},
title   = {On {A}zadkia-{C}hatterjee's conditional dependence coefficient.},
journal = {Bernoulli},
volume  = {30},
number  = {2},
year    = {2024},
pages   = {851--877},
}

@article{strothmann2022,
author  = {Strothmann, C. and Dette, H. and Siburg, K.F.},
title   = {Rearranged dependence measures.},
journal = {Bernoulli},
volume  = {30},
number  = {2},
year    = {2024},
pages   = {1055--1078},
}

@article{wiesel2022,
author  = {Wiesel, J.C.W.} ,
title   = {Measuring association with {W}asserstein distances.},
journal = {Bernoulli},
volume  = {28},
year    = {2022},
pages   = {2816--2832},
}

@Article{Ansari-2021,
 Author = {Ansari, J. and R{\"u}schendorf, L.},
 Title = {Sklar's theorem, copula products, and ordering results in factor models},
 FJournal = {Dependence Modeling},
 Journal = {Depend. Model.},
 xxxISSN = {2300-2298},
 Volume = {9},
 Pages = {267--306},
 Year = {2021},
 xxxLanguage = {English},
 xxxDOI = {10.1515/demo-2021-0113},
 xxxKeywords = {60E15,60E05,28A50,62H05},
 xxxzbMATH = {7451615},
 xxxZbl = {1479.60040}
}

@Article{Chong-1974,
 Author = {Chong, K.-M.},
 Title = {Some extensions of a theorem of {Hardy}, {Littlewood} and {Polya} and their applications},
 FJournal = {Canadian Journal of Mathematics},
 Journal = {Can. J. Math.},
 xxxISSN = {0008-414X},
 Volume = {26},
 Pages = {1321--1340},
 Year = {1974},
 xxxLanguage = {English},
 xxxDOI = {10.4153/CJM-1974-126-1},
 xxxKeywords = {28A20,26A51},
 xxxzbMATH = {3462253},
 xxxZbl = {0295.28006}
}

@Book{Kallenberg-2002,
 Author = {Kallenberg, O.},
 Title = {Foundations of Modern Probability},
 Edition = {2nd},
 Year = {2002},
 Publisher = {Springer},
 Address = {New York}
}

@book{Kallenberg-2017,
 author = {Kallenberg, Olav},
 title = {Random Measures, Theory and Applications},
 year = {2017},
 publisher = {Springer},
 Address = {Cham}
}

@Article{Kimeldorf-1978,
 Author = {Kimeldorf, G. and Sampson, A. R.},
 Title = {Monotone dependence},
 FJournal = {The Annals of Statistics},
 Journal = {Ann. Stat.},
 xxxISSN = {0090-5364},
 Volume = {6},
 Pages = {895--903},
 Year = {1978},
 xxxLanguage = {English},
 xxxDOI = {10.1214/aos/1176344262},
 xxxKeywords = {62H20,62H05},
 xxxzbMATH = {3589599},
 xxxZbl = {0378.62059}
}

@article{Kimeldorf-1989,
 author = {Kimeldorf, George and Sampson, Allan R.},
 title = {A framework for positive dependence},
 fjournal = {Annals of the Institute of Statistical Mathematics},
 journal = {Ann. Inst. Stat. Math.},
 xxxissn = {0020-3157},
 volume = {41},
 number = {1},
 pages = {31--45},
 year = {1989},
 xxxlanguage = {English},
 xxxkeywords = {62H05,60E15},
 xxxzbMATH = {4149386},
 xxxZbl = {0701.62061}
}

@article{Leskela-2017,
 author = {Leskel{\"a}, Lasse and Vihola, Matti},
 title = {Conditional convex orders and measurable martingale couplings},
 fjournal = {Bernoulli},
 journal = {Bernoulli},
 xxxissn = {1350-7265},
 volume = {23},
 number = {4A},
 pages = {2784--2807},
 year = {2017},
 xxxlanguage = {English},
 xxxdoi = {10.3150/16-BEJ827},
 xxxkeywords = {60E15,60G42},
 xxxzbMATH = {6778256},
 xxxZbl = {1387.60039}
}

@article{Mori-Szekely-2019,
 author = {M{\'o}ri, Tam{\'a}s F. and Sz{\'e}kely, G{\'a}bor J.},
 title = {Four simple axioms of dependence measures},
 fjournal = {Metrika},
 journal = {Metrika},
 xxxissn = {0026-1335},
 volume = {82},
 number = {1},
 pages = {1--16},
 year = {2019},
 xxxlanguage = {English},
 xxxdoi = {10.1007/s00184-018-0670-3},
 xxxkeywords = {62H20,62A01,62E10},
 xxxurl = {real.mtak.hu/83105/1/depedencerev.pdf},
 xxxzbMATH = {7039708},
 xxxZbl = {1417.62159}
}

@Article{Muller-2000,
 Author = {M{\"u}ller, A. and Scarsini, M.},
 Title = {Some remarks on the supermodular order},
 FJournal = {Journal of Multivariate Analysis},
 Journal = {J. Multivariate Anal.},
 xxxISSN = {0047-259X},
 Volume = {73},
 Number = {1},
 Pages = {107--119},
 Year = {2000},
 xxxLanguage = {English},
 xxxDOI = {10.1006/jmva.1999.1867},
 xxxKeywords = {60E15,62H20,62E10},
 xxxzbMATH = {1471708},
 xxxZbl = {0958.60009}
}

@Book{Muller-2002,
 Author = {M{\"u}ller, A. and Stoyan, D.},
 Title = {Comparison Methods for Stochastic Models and Risks},
 FSeries = {Wiley Series in Probability and Statistics},
 xxxISSN = {1940-6347},
 xxxISBN = {0-471-49446-1},
 Year = {2002},
 Publisher = {Wiley},
 Address = {Chichester},
 xxxLanguage = {English},
 xxxKeywords = {60-02,60E15,90-02,90B25},
 xxxzbMATH = {1722634},
 xxxZbl = {0999.60002}
}

@article{Muller-2001,
 author = {M{\"u}ller, Alfred and Scarsini, Marco},
 title = {Stochastic comparison of random vectors with a common copula.},
 fjournal = {Mathematics of Operations Research},
 journal = {Math. Oper. Res.},
 xxxissn = {0364-765X},
 volume = {26},
 number = {4},
 pages = {723--740},
 year = {2001},
 xxxlanguage = {English},
 xxxdoi = {10.1287/moor.26.4.723.10006},
 xxxkeywords = {60E15,60E05,62H20},
 xxxurl = {semanticscholar.org/paper/4f6d381537c947b4c7a069930c691096e2bfb2d9},
 xxxzbMATH = {2228221},
 xxxZbl = {1082.60504}
}

@Book{Nelsen-2006,
 Author = {Nelsen, R. B.},
 Title = {An Introduction to Copulas},
 Edition = {2nd},
 Year = {2006},
 Publisher = {Springer},
 Address ={New York}
}

@Article{Renyi-1959,
 Author = {R{\'e}nyi, Alfr{\'e}d},
 Title = {On measures of dependence},
 FJournal = {Acta Mathematica Academiae Scientiarum Hungaricae},
 Journal = {Acta Math. Acad. Sci. Hung.},
 xxxISSN = {0001-5954},
 Volume = {10},
 Pages = {441--451},
 Year = {1959},
 xxxLanguage = {English},
 xxxDOI = {10.1007/BF02024507},
 xxxzbMATH = {3148820},
 xxxZbl = {0091.14403}
}

@Book{Ru-2013,
 Author = {R{\"u}schendorf, L.},
 Title = {Mathematical Risk Analysis.},
 Year = {2013},
 Publisher = {Springer},
 Address = {Berlin}
}

@incollection{Scarsini-1996,
 author = {Scarsini, Marco and Shaked, Moshe},
 title = {Positive dependence orders: {A} survey},
 editor = {Heyde, C.~C. and Prohorov, Y.~V. and Pyke, R. and Rachev, S.~T.},
 booktitle = {Athens Conference on Applied Probability and Time Series Analysis},
 xxxisbn = {0-387-94788-4},
 pages = {70--91},
 year = {1996},
 publisher = {Berlin: Springer},
 xxxlanguage = {English},
 xxxkeywords = {60E05},
 xxxzbMATH = {939775},
 xxxZbl = {0858.60018}
}

@article{Schriever-1987,
 author = {Schriever, B. F.},
 title = {An ordering for positive dependence},
 fjournal = {The Annals of Statistics},
 journal = {Ann. Stat.},
 xxxissn = {0090-5364},
 volume = {15},
 pages = {1208--1214},
 year = {1987},
 xxxlanguage = {English},
 xxxdoi = {10.1214/aos/1176350500},
 xxxkeywords = {62H20,62H99,62H05,62H15},
 xxxzbMATH = {4026594},
 xxxZbl = {0631.62068}
}

@Book{Shaked-2007,
 Author = {Shaked, M. and Shantikumar, J. G.},
 Title = {Stochastic Orders},
 Year = {2007},
 Publisher = {Springer},
 Address = {New York},
 xxxLanguage = {English}
}

@article{Shaked-2012,
 author = {Shaked, Moshe and Sordo, Miguel A. and Su{\'a}rez-Llorens, Alfonso},
 title = {Global dependence stochastic orders},
 fjournal = {Methodology and Computing in Applied Probability},
 journal = {Methodol. Comput. Appl. Probab.},
 xxxissn = {1387-5841},
 volume = {14},
 number = {3},
 pages = {617--648},
 year = {2012},
 xxxlanguage = {English},
 xxxdoi = {10.1007/s11009-011-9253-8},
 xxxkeywords = {60E15,91B26,90B25,60K10},
 xxxzbMATH = {6124711},
 xxxZbl = {1259.60026}
}

@InCollection{Shaked-2013,
 Author = {Shaked, Moshe and Sordo, Miguel A. and Su{\'a}rez-Llorens, Alfonso},
 Title = {A global dependence stochastic order based on the presence of noise},
 BookTitle = {Stochastic Orders in Reliability and Risk. In Honor of Professor Moshe Shaked.},
 editor={Li, H. and Li, X.},
 publisher={Springer, New York},
 xxxISBN = {978-1-4614-6891-2; 978-1-4614-6892-9},
 Pages = {3--39},
 Year = {2013},
 xxxLanguage = {English},
 xxxDOI = {10.1007/978-1-4614-6892-9_1},
 xxxKeywords = {60E15,60K10,62N05},
 xxxzbMATH = {6439501},
 xxxZbl = {1351.60021}
}

@article{Tchen-1980,
 author = {Tchen, Andre H.},
 title = {Inequalities for distributions with given marginals},
 fjournal = {The Annals of Probability},
 journal = {Ann. Probab.},
 xxxissn = {0091-1798},
 volume = {8},
 pages = {814--827},
 year = {1980},
 language = {English},
 xxxdoi = {10.1214/aop/1176994668},
 xxxkeywords = {62E10,62E15,62H05,62H10,62H20,60E05},
 xxxzbMATH = {3720160},
 xxxZbl = {0459.62010}
}

@Book{Villani-2009,
 Author = {Villani, C.},
 Title = {Optimal Transport},
 xxxISSN = {0072-7830},
 xxxISBN = {978-3-540-71049-3},
 Year = {2009},
 Publisher = {Springer},
 Address = {Berlin},
 xxxLanguage = {English},
 xxxDOI = {10.1007/978-3-540-71050-9},
 xxxKeywords = {53-02,49-02,90-02,93-02},
 xxxzbMATH = {5306371},
 xxxZbl = {1156.53003}
}

@article{Whitt-1980,
  title={Uniform conditional stochastic order},
  author={Whitt, Ward},
  journal={J. Appl. Probab.},
  volume={17},
  number={1},
  pages={112--123},
  year={1980},
  publisher={Cambridge University Press}
}

@article{Whitt-1985,
 author = {Whitt, Ward},
 title = {Uniform conditional variability ordering of probability distributions},
 fjournal = {Journal of Applied Probability},
 journal = {J. Appl. Probab.},
 xxxissn = {0021-9002},
 volume = {22},
 pages = {619--633},
 year = {1985},
 xxxlanguage = {English},
 xxxdoi = {10.2307/3213865},
 xxxkeywords = {60E99,60K25,60K10},
 xxxzbMATH = {3911377},
 xxxZbl = {0571.60022}
}

@article{Yanagimoto-1969,
 author = {Yanagimoto, Takemi and Okamoto, Masashi},
 title = {Partial orderings of permutations and monotonicity of a rank correlation statistic},
 fjournal = {Annals of the Institute of Statistical Mathematics},
 journal = {Ann. Inst. Stat. Math.},
 xxxissn = {0020-3157},
 volume = {21},
 pages = {489--506},
 year = {1969},
 xxxlanguage = {English},
 xxxdoi = {10.1007/BF02532273},
 xxxkeywords = {62H20},
 xxxzbMATH = {3332040},
 xxxZbl = {0208.44704}
}

@article{Embrechts2013,
 author = {Embrechts, P. and Hofert, M.},
 title = {A note on generalized inverses},
 journal = {Math. Meth. Oper. Res.},
 volume = {77},
 pages = {423--432},
 year = {2013}
}

@Book{VdV2013,
 Author = {Van der vaart, A.W.},
 Title = {Asymptotic Statistics.},
 Year = {1998},
 Publisher = {Cambridge University Press},
 Address = {New York}
}

\addcontentsline{toc}{section}{Bibliography}

\end{document}